\documentclass[10pt,reqno]{amsart}

\usepackage{color}
\usepackage{mathrsfs}
\usepackage{enumitem}
\usepackage{graphicx}
\usepackage[normalem]{ulem}
\usepackage{amsmath, amssymb, amsfonts, dsfont, amsthm}
\usepackage[utf8]{inputenc}
\DeclareGraphicsExtensions{.pdf,.png,.jpg}
\newtheorem{thm}{Theorem}[section]
\newtheorem{defn}[thm]{Definition}
\newtheorem{corollary}[thm]{Corollary}
\newtheorem{lemma}[thm]{Lemma}

\theoremstyle{remark}
\newtheorem{remark}[thm]{Remark}

\usepackage{comment}



\def\XXint#1#2#3{{\setbox0=\hbox{$#1{#2#3}{\int}$}
\vcenter{\hbox{$#2#3$}}\kern-.5\wd0}}

\newcommand\cbrk{\text{$]$\kern-.15em$]$}}
\newcommand\opar{\text{\,\raise.2ex\hbox{${\scriptstyle
|}$}\kern-.34em$($}}
\newcommand\cpar{\text{$)$\kern-.34em\raise.2ex\hbox{${\scriptstyle |}$}}\,}

\def\<{\langle}
\def\>{\rangle}

\def\E{{\mathbb E}}

\newcommand\bL{\mathbb{L}}
\newcommand\bR{\mathbb{R}}

\newcommand\bH{\mathbb{H}}
\newcommand\bZ{\mathbb{Z}}

\newcommand\bD{\mathbb{D}}
\newcommand\bS{\mathbb{S}}

\newcommand\bN{\mathbb{N}}

\newcommand\cA{\mathcal{A}}
\newcommand\cB{\mathcal{B}}

\newcommand\cD{\mathcal{D}}
\newcommand\cF{\mathcal{F}}
\newcommand\cG{\mathcal{G}}

\newcommand\cP{\mathcal{P}}

\newcommand\cO{\mathcal{O}}

\newcommand\frH{\mathfrak{H}}

\newcommand{\mysection}[1]{\section{#1}
\setcounter{equation}{0}}




\newcommand{\one}{\ensuremath{\mathds 1}}

\newcommand{\prob}{\ensuremath{\mathbb P}}

\newcommand{\pred}{\ensuremath{\mathcal{P}}}


\newcommand{\cont}{\ensuremath{\mathcal{C}}}


\newcommand{\abs}[1]{\ensuremath{\lvert #1 \rvert}}
\newcommand{\Abs}[1]{\ensuremath{\big\lvert  #1 \big\rvert}}

\newcommand{\nnrm}[2]{\ensuremath{\lVert #1 \rVert_{#2}}}
\newcommand{\gnnrm}[2]{\ensuremath{\big\lVert #1 \big\rVert_{#2}}}

\newcommand{\ssgnnrm}[2]{\ensuremath{\bigg\lVert #1 \bigg\rVert_{#2}}}

\newcommand{\nrklam}[1]{(#1)}
\newcommand{\rklam}[1]{\left(#1\right)}
\newcommand{\grklam}[1]{\big(#1\big)}
\newcommand{\sgrklam}[1]{\Big(#1\Big)}
\newcommand{\ssgrklam}[1]{\bigg(#1\bigg)}

\newcommand{\ggklam}[1]{\big\{#1\big\}}
\newcommand{\sggklam}[1]{\Big\{#1\Big\}}
\newcommand{\ssggklam}[1]{\bigg\{#1\bigg\}}

\newcommand{\geklam}[1]{\big [#1 \big ]}
\newcommand{\sgeklam}[1]{\Big [#1 \Big ]}
\newcommand{\ssgeklam}[1]{\bigg [#1 \bigg ]}
\newcommand{\reklam}[1]{\Bigg [#1 \Bigg ]}

\newcommand{\dd}{\,\mathrm d}        
\newcommand{\dx}{\mathrm d x}

\newcommand{\ds}{\mathrm d s}
\newcommand{\dt}{\mathrm d t}
\newcommand{\dy}{\mathrm d y}

\newcommand{\domain}{\ensuremath{\mathcal{O}}}   
\newcommand{\domaina}{\ensuremath{\mathcal{D}}}   
\newcommand{\dist}{\ensuremath{\rho}}   
\newcommand{\distv}{\ensuremath{\dist_\circ}}   




\DeclareFontFamily{U}{matha}{\hyphenchar\font45}
\DeclareFontShape{U}{matha}{m}{n}{
      <5> <6> <7> <8> <9> <10> gen * matha
      <10.95> matha10 <12> <14.4> <17.28> <20.74> <24.88> matha12
      }{}
\DeclareSymbolFont{matha}{U}{matha}{m}{n}
\DeclareFontSubstitution{U}{matha}{m}{n}
\DeclareFontFamily{U}{mathx}{\hyphenchar\font45}
\DeclareFontShape{U}{mathx}{m}{n}{
      <5> <6> <7> <8> <9> <10>
      <10.95> <12> <14.4> <17.28> <20.74> <24.88>
      mathx10
      }{}
\DeclareSymbolFont{mathx}{U}{mathx}{m}{n}
\DeclareFontSubstitution{U}{mathx}{m}{n}
\DeclareMathDelimiter{\vvvert}{0}{matha}{"7E}{mathx}{"17}


\newcommand{\wsob}{\ensuremath{H}}
\newcommand{\bwsob}{\ensuremath{\mathbb{\wsob}}}
\newcommand{\frwsob}{\ensuremath{\mathfrak{\wsob}}}



\includecomment{suggestion}

\includecomment{pcicomment}

\definecolor{felix}{rgb}{0.2,0.2,1.0} 
\definecolor{petru}{rgb}{0.7,0.1,0.1} 
\definecolor{comment}{rgb}{0.1,0.1,0.7} %
\definecolor{alternative}{rgb}{0.0,0.5,0.0} 





\newcommand{\pci}{\color{black}}
\newcommand{\icp}{\color{black}}






\oddsidemargin 0in
\evensidemargin 0in
\topmargin -0.6in
\textwidth 6in
\textheight 9in
\frenchspacing

\begin{document}

\title[An $L_p$-\pci theory \icp for the stoch. heat eq. on angular domains with mixed weights]
{An $L_p$-\pci theory \icp for the stochastic heat equation on angular domains in $\mathbb{R}^2$ with mixed weights}

\author{Petru A. Cioica-Licht}
\thanks{Very careful proofreading of parts of the manuscript by Felix Lindner is gratefully acknowledged.}
\address{Petru A. Cioica-Licht (n\'e Cioica), Faculty of Mathematics, University of Duisburg-Essen, 45117 Essen, Germany}
\email{petru.cioica-licht@uni-due.de}

\subjclass[2010]{60H15; 35R60, 35K05}

\keywords{Stochastic partial differential equation,
stochastic heat equation,
weighted $L_p$-estimate,
weighted Sobolev regularity,
angular domain,
non-smooth domain,
corner singularity}

\begin{abstract}
We prove a refined $L_p$-estimate ($p\geq 2$) for the stochastic heat equation on angular domains in $\bR^2$ with mixed weights based on both, the distance to the boundary and the distance to the vertex. This way we can capture both causes for singularities of the solution: the incompatibility of noise and boundary condition on the one hand and the influence of boundary singularities (here, the vertex) on the other hand. Higher order $L_p$-Sobolev regularity with mixed weights is also established. 
\end{abstract}

\maketitle

\mysection{Introduction}\label{sec:Introduction}

This paper aligns in the program started in~\cite{CioKimLee+2018,CioKimLee2019} towards a refined $L_p$-theory for stochastic partial differential equations (SPDEs) on non-smooth domains.
As already mentioned therein, there are mainly two effects that influence the regularity of solutions to such equations: 
On the one hand, a certain incompatibility between noise and boundary conditions may produce blow-ups of higher order derivatives near the boundary---even if the boundary was smooth~\cite{Fla1990,Kry1994}. On the other hand, singularities of the boundary of the domain, like, e.g., corners, edges, cusps and any other points where the boundary is not differentiable, may also lead to singuarities of the solution in their vicinity---even if the forcing terms are deterministic and smooth~\cite{Gri1985,JerKen1995};
see also~\cite{Lin2011,Lin2014} for a detailed analysis of the breakdown of Sobolev regularity on polygonal domains in the context of SPDEs.

In a series of papers it has been shown that on smooth domains (at least $\cont^1$) the incompatibility of noise and boundary condition can be captured accurately by means of weighted $L_p$-Sobolev spaces with weights based on the distance to the boundary~\cite{Kim2004, KimKry2004, Kry1994,KryLot1999, KryLot1999b}; see also~\cite{LinVer2018+}, which can be used to reproduce parts of these results by means of~\cite{NeeVerWei2012,NeeVerWei2012b}.
At the same time, the analysis of deterministic equations on domains with corners and edges takes place in weighted $L_p$-Sobolev spaces with weights based on the distance to the boundary singularities, see, for instance,~\cite{Dau1988,KozMazRos1997, KozMazRos2001,MazRos2010} and the bibliographies therein for elliptic equations and~\cite{KozNaz2014,Naz2001,Sol2001,PruSim2007} for parabolic problems.
Thus, in order to capture both effects and their interplay, a system of weights based on a combination of the distance to the boundary and the distance to its singularities suggests itself.

An appropriate system of mixed weights has been introduced in~\cite{CioKimLee2019} for the stochastic heat equation on angular and polygonal domains $\domain\subset\bR^2$. 
In particular, it has been shown that with this system of weights higher order weighted $L_p$-Sobolev regularity ($p\geq 2$) can be established once a solution of low regularity is known to exist.
However, so far, the existence part has been only solved for a very restrictive range of parameters.
In this paper we prove one crucial missing link: 
refined weighted $L_p$-estimate\pci s \icp for the stochastic \pci and deterministic \icp convolution\pci s \icp associated to the stochastic heat equation
\begin{equation}\label{eq:SHE}
\left.
\begin{alignedat}{3}
\dd u 
&= 
(\Delta && u \pci +  f \icp) 
\dd t
+
g^k \dd w^k_t \quad \text{on } \Omega\times(0,T]\times\domaina,	\\
u
&=
0 && \quad \text{on } \Omega\times(0,T]\times\partial\domaina,	\\
u(0)
&=
0 && \quad \text{on } \Omega\times\domaina,
\end{alignedat}
\right\}	
\end{equation}
on angular domains
\begin{equation}\label{domain:angular}
\cD:=\cD_{\kappa_0}:=\ggklam{x=(x_1,x_2)\in \bR^2: x=(r\cos\vartheta,r\sin\vartheta),\; r>0,\;\vartheta\in (0,\kappa_0)}
\end{equation}
of arbitrary angle $0<\kappa_0<2\pi$, see Theorem\pci s\icp~\ref{thm:LpEstimate} \pci and~\ref{thm:LpEstimate:det} \icp below.
\pci These \icp estimate\pci s \icp at hand, \pci existence and higher order \icp $L_p$-regularity for the stochastic heat equation~\eqref{eq:SHE} in suitable weighted $L_p$-Sobolev spaces can be established along the lines of~\cite{CioKimLee2019}, see Section~\ref{sec:MaxReg}, in particular, Theorem~\ref{thm:reg:main}.
Before we start, we present the setting, which is assumed to hold throughout this paper.

\medskip

\noindent\textbf{Setting.} Let $(\Omega,\cF,\prob)$ be a complete probability space, and $\nrklam{\cF_{t}}_{t\geq0}$ be an increasing filtration of $\sigma$-fields $\cF_{t}\subset\cF$, each of which contains all $(\cF,\prob)$-null sets;
$\E$ denotes the expectation operator. 
We assume that on $\Omega$ we are given a family $(w_t^k)_{t\geq0}$, $k\in\bN$, of independent one-dimensional Wiener processes relative to $\nrklam{\cF_{t}}_{t\geq0}$. We fix $T\in(0,\infty)$ and denote by $\cP_T$ the predictable $\sigma$-field on $\Omega_T:=\Omega\times (0,T]$ generated by $\nrklam{\cF_{t}}_{t\geq0}$; $\prob_T:=\prob\otimes\dt$. Moreover, we fix an arbitrary angle $0<\kappa_0<2\pi$ and consider the 
the stochastic heat equation~\eqref{eq:SHE}
on the angular domain
$\domaina=\domaina_{\kappa_0}$.
Throughout, $C$ is used to denote a positive finite constant and the notation $C(a_1,\ldots,a_n)$ or $C_{a_1,\ldots,a_n}$ means that $C$ only depends on the parameters $a_1,\ldots,a_n$. In general, constants may differ at any appearance.

\mysection{\pci $L_p$-\icp estimates}\label{sec:LpEstimateStoch}

In this section we state and prove 
\pci refined \icp weighted $L_p$-estimate\pci s \icp for the stochastic convolution
\begin{equation}\label{eq:stochconv}
(t,x)\mapsto\sum_{k=1}^\infty\int_0^t\int_\domaina G(t-s,x,y)g^k(s,y)\dd y\dd w^k_s,\qquad (t,x)\in(0,T]\times\domaina\pci,\icp
\end{equation}
\pci and for the deterministic convolution
\begin{equation}\label{eq:detconv}
(t,x)\mapsto\int_0^t\int_\domaina G(t-s,x,y)f(s,y)\dd y\dd s,\qquad (t,x)\in(0,T]\times\domaina.
\end{equation}\icp
Here, $G(t,x,y)=G_{\kappa_0}(t,x,y)$ is the Green function for the heat equation on $\cD=\cD_{\kappa_0}$ with zero Dirichlet boundary condition, defined for every $y\in\cD$ as the solution (in the sense of distributions) to the problem
\begin{align*}
&\frac{\partial G(t,x,y)}{\partial t}-\Delta_x G(t,x,y)=\delta_{(0,y)}(t,x)\quad\text{in }\bR\times\cD,\\
&G(t,x,y)=0\quad\text{for }t\in\bR,\,x\in\partial\cD\setminus\{0\},\qquad G(t,x,y)=0\quad\text{for }t<0,
\end{align*}
cf., e.g., \cite[Lemma~3.7]{KozNaz2014}. 
Our weights are based on the distance $\dist:=\mathrm{dist}(\cdot,\domaina)$ to the boundary \emph{and} on the distance $\distv:=\mathrm{dist}(\cdot,\{0\})=\abs{\cdot}$ to the vertex of $\domaina$.
For $1<p<\infty$, $\Theta\in\bR$, and $\theta\in\bR$, we write 
\[
L_{p,\Theta,\theta}(\domaina)
:=
L_p(\domaina,\cB(\domaina),\distv^{\theta-2}\grklam{\tfrac{\dist}{\distv}}^{\Theta-2}\dx;\bR)
\text{ and }
L_{p,\Theta,\theta}(\domaina;\ell_2)
:=
L_p(\domaina,\cB(\domaina),\distv^{\theta-2}\grklam{\tfrac{\dist}{\distv}}^{\Theta-2}\dx;\ell_2)
\]
as well as
\[
\bL_{p,\Theta,\theta}(\domaina,T):=
L_p(\Omega_T,\pred_T,\prob_T;L_{p,\Theta,\theta}(\domaina))
\text{ and }
\bL_{p,\Theta,\theta}(\domaina,T;\ell_2):=
L_p(\Omega_T,\pred_T,\prob_T;L_{p,\Theta,\theta}(\domaina;\ell_2)).
\]

Our main $L_p$-estimate for the \pci stochastic convolution~\eqref{eq:stochconv} \icp reads as follows.

\begin{thm}\label{thm:LpEstimate}
Let \pci $2\leq p<\infty$ \icp and let
\begin{equation}\label{eq:range:thetaTheta}
p\ssgrklam{1-\frac{\pi}{\kappa_0}}
<\theta<
p\ssgrklam{1+\frac{\pi}{\kappa_0}}
\qquad
\text{and}
\qquad
1<\Theta<p+1.
\end{equation}
Then there exists a finite constant $C$, independent of $g$ and $T$, such that
\begin{align*}
\ssgnnrm{t\mapsto\sum_{k=1}^\infty\int_0^t\int_\domaina G(t-s,\cdot,y)g^k(s,y)\dd y\dd w^k_s}{\bL_{p,\Theta-p,\theta-p}(\domaina,T)}
\leq C\,
\nnrm{g}{\bL_{p,\Theta,\theta}(\domaina,T;\ell_2)}.
\end{align*}
\end{thm}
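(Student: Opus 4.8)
The plan is to strip off the stochastic ingredient first and then reduce everything to a deterministic, weighted kernel inequality in which the boundary degeneracy and the vertex degeneracy are disentangled by parabolic scaling. \emph{Step 1 (probabilistic reduction).} For each fixed $(t,x)$ the value $u(t,x)$ of the stochastic convolution is the terminal value of the martingale $\tau\mapsto\sum_k\int_0^\tau\big(\int_\domaina G(t-s,x,y)g^k(s,y)\dy\big)\dd w^k_s$, so the Burkholder--Davis--Gundy inequality (this is where $p\geq2$ is used) gives $\E|u(t,x)|^p\leq C_p\,\E\big(\int_0^t\nnrm{\int_\domaina G(t-s,x,y)g(s,y)\dy}{\ell_2}^2\ds\big)^{p/2}$. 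Since $G\geq0$, Minkowski's integral inequality in $\ell_2$ moves the norm inside the $y$-integral, and with $h:=\nnrm{g}{\ell_2}\geq0$ the assertion reduces, after taking expectations, to the \emph{deterministic} estimate
\[
\int_0^T\!\int_\domaina\bigg(\int_0^t\Big(\int_\domaina G(t-s,x,y)\,h(s,y)\dy\Big)^2\ds\bigg)^{p/2}\distv^{\theta-p-2}\Big(\tfrac{\dist}{\distv}\Big)^{\Theta-p-2}\dx\dt
\leq C\int_0^T\!\int_\domaina h^p\,\distv^{\theta-2}\Big(\tfrac{\dist}{\distv}\Big)^{\Theta-2}\dy\dt
\]
for arbitrary nonnegative $h$; applying it to $h=\nnrm{g}{\ell_2}$ recovers the theorem.

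\emph{Step 2 (kernel bounds and dyadic decomposition).} I would feed in sharp heat-kernel estimates for $G=G_{\kappa_0}$ on the sector---in the spirit of the Green-function analysis of~\cite{KozNaz2014}---of the schematic form $G(t,x,y)\lesssim t^{-1}\big(\tfrac{|x|}{\sqrt t}\wedge1\big)^{\pi/\kappa_0}\big(\tfrac{|y|}{\sqrt t}\wedge1\big)^{\pi/\kappa_0}\Phi_{\partial}(x,y,t)\,e^{-c|x-y|^2/t}$, where the corner exponent $\pi/\kappa_0$ records the leading Dirichlet behaviour at the vertex and $\Phi_{\partial}$ is a boundary-degeneracy factor in $\dist(x),\dist(y)$. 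I would then split $\domaina$ into the dyadic annuli $A_n=\{2^n\leq|x|<2^{n+1}\}$, $n\in\bZ$, and use the parabolic scaling invariance of the heat equation to rescale each $A_n$ to unit size. Under this rescaling the vertex weight $\distv^{\theta-2}$ becomes an explicit power of $2^n$, while the residual factor $(\dist/\distv)^{\Theta-2}$ turns into a genuine boundary weight on a fixed reference region on which the vertex sits at unit distance and $\partial\domaina$ is, for these purposes, as good as smooth.

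\emph{Step 3 (decoupling and summation).} On the rescaled reference piece the remaining estimate is precisely the weighted kernel bound underlying the classical smooth-domain (half-space) $L_p$-theory, available exactly for $d-1<\Theta<d-1+p$, i.e.\ for $1<\Theta<p+1$ when $d=2$; this is the origin of the stated range of $\Theta$. Reassembling the annuli produces, after using the Gaussian factor to control the interaction between well-separated scales, geometric series in $2^n$ whose ratios carry exponents assembled from $\theta$ and the corner exponent $\pi/\kappa_0$. Their absolute convergence is equivalent to the two-sided bound $p(1-\tfrac{\pi}{\kappa_0})<\theta<p(1+\tfrac{\pi}{\kappa_0})$, which is how the range of $\theta$ in~\eqref{eq:range:thetaTheta} enters.

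The genuinely delicate part is the scale summation of Step~3, and in particular the off-diagonal (non-local) interactions in the vicinity of the vertex, where $\dist$ and $\distv$ degenerate together and the two weight mechanisms are coupled rather than independent. Balancing the Gaussian decay against both the corner exponent and the two weight powers is exactly what pins down the borderline ranges in~\eqref{eq:range:thetaTheta}; by contrast the Burkholder--Davis--Gundy step and the scaling reductions are routine. It is in making this last bookkeeping rigorous---and in importing heat-kernel bounds sharp enough to see the exponent $\pi/\kappa_0$---that the real work of the proof lies.
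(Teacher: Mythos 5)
Your Step 1 is exactly how the paper begins: Burkholder--Davis--Gundy plus the triangle inequality for $\ell_2$-valued integrals reduces the theorem to a deterministic weighted inequality for the kernel $G$. After that, however, the paper never decomposes into dyadic annuli: it runs a global Schur-type test directly on the sector, splitting the kernel by H\"older's inequality with four auxiliary exponents $\gamma_1,\tilde\gamma_1,\gamma_2,\tilde\gamma_2$, and then invoking two lemmas --- a Gaussian-weighted integral bound over $\domaina$ that handles the vertex weight and the boundary weight \emph{simultaneously} (Lemma~\ref{lem:b1b2a1a2s}), transferred to every time scale by the exact dilation invariance $\abs{ax}=a\abs{x}$, $\dist(ax)=a\dist(x)$ of the sector, and an elementary time-integral bound (Lemma~\ref{lem:At}). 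Your proposal replaces this core by rescaled annuli plus classical smooth-domain theory plus a scale summation, and it is in that replacement that the gaps lie.

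Two of your claims would fail as stated. First, on a fixed annulus $A_n$ the time integration still runs over all $t-s$, and in the regime $t-s\gg 2^{2n}$ the sector Green function between two points of $A_n$ decays like $(t-s)^{-1-\pi/\kappa_0}$ (up to boundary factors), which for a re-entrant corner ($\kappa_0>\pi$) is \emph{slower} than the half-space decay $(t-s)^{-2}$; no smooth-domain estimate applied to the rescaled annulus can reproduce this, so the diagonal terms are not ``precisely the weighted kernel bound underlying the classical smooth-domain theory'', and the restriction on $\theta$ already enters there, not only in the reassembly. Second, the Gaussian factor cannot ``control the interaction between well-separated scales'': for $t-s\gtrsim 2^{2\max(m,n)}$ one has $e^{-c\abs{x-y}^2/(t-s)}\approx 1$ for $x\in A_n$, $y\in A_m$, so all inter-scale decay must come from the vertex factors interacting with the time integration --- which is exactly the bookkeeping you defer. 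Third, your schematic kernel bound is itself problematic: the correct estimate (Theorem~\ref{thm:GreenEstimate}) has the coupled form $R_{x,t-s}^{\lambda_1-1}J_{x,t-s}$ with $\lambda_1<\pi/\kappa_0$, i.e.\ the boundary factor is discounted by one power of the vertex factor; reading your $\Phi_\partial$ as an independent boundary factor multiplying $(\abs{x}/\sqrt{t}\wedge 1)^{\pi/\kappa_0}$ produces an upper bound that is \emph{smaller} than the actual kernel near the vertex (where $\dist\sim\distv$), hence false precisely in the region where the two weights couple. Since you yourself acknowledge that this coupling and the scale summation are where the real work lies, the proposal is an honest outline, but the substance of the proof --- the paper's Steps 1 and 2, carried by Lemmas~\ref{lem:b1b2a1a2s} and~\ref{lem:At} --- is missing, and the route you sketch for it would need repair at the three points above before it could be completed.
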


The key ingredient for the proof of this assertion is the following Green function estimate, recently proven in~\cite{KimLeeSeo2020+}, see, in particular, Theorem~2.6 and Remark~2.2 therein as well as~\cite[Section~2]{KozNaz2014}, in particular, Theorem~2.4(3) therein. 
Moreover, we need some subtle estimates, which we prove in Section~\ref{sec:MainAuxiliaryEstimate}.
We use the notation
\begin{equation}\label{eq:RxJx}
R_{x,c}:=\frac{\abs{x}}{\sqrt{c}+\abs{x}}
\quad\text{and}\quad
J_{x,c}:=\frac{\dist(x)}{\sqrt{c}+\dist(x)}
\end{equation}
for arbitrary $x\in\bR^d$ and $c>0$.

\begin{thm}[\cite{KimLeeSeo2020+}]\label{thm:GreenEstimate}
For all $0<\lambda_1,\lambda_2<\frac{\pi}{\kappa_0}$, there exist finite constants $C,\sigma>0$ depending only on $\kappa_0$, $\lambda_1$, and $\lambda_2$, such that
\[
\abs{G(t-s,x,y)}
\leq 
C \,
R_{x,t-s}^{\lambda_1-1}\,
R_{y,t-s}^{\lambda_2-1}\,
J_{x,t-s}\,
J_{y,t-s}\,
\frac{1}{t-s}e^{-\sigma\frac{\abs{x-y}^2}{t-s}}.
\]
\end{thm}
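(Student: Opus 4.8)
The plan is to derive the pointwise bound directly from the explicit spectral representation of the Dirichlet heat kernel on the sector $\cD_{\kappa_0}$ and to reduce everything to classical asymptotics of modified Bessel functions. Throughout write $\tau:=t-s>0$ and use polar coordinates, so that $x=(|x|,\vartheta_x)$, $y=(|y|,\vartheta_y)$ with $\vartheta_x,\vartheta_y\in(0,\kappa_0)$. First I would exploit that $\cD_{\kappa_0}$ is a cone: each of the four factors $R_{x,\tau},R_{y,\tau},J_{x,\tau},J_{y,\tau}$, as well as the Gaussian $e^{-\sigma|x-y|^2/\tau}$, is invariant under the parabolic dilation $(x,y,\tau)\mapsto(cx,cy,c^2\tau)$, whereas $G(c^2\tau,cx,cy)=c^{-2}G(\tau,x,y)$ and $\tau^{-1}\mapsto c^{-2}\tau^{-1}$. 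Hence the asserted inequality is itself scale invariant, and it suffices to treat $\tau=1$. Separation of variables then yields the classical wedge formula
\[
G(\tau,x,y)=\frac{1}{\kappa_0\tau}\,e^{-(|x|^2+|y|^2)/(4\tau)}\sum_{n\geq1}\sin\!\grklam{\tfrac{n\pi\vartheta_x}{\kappa_0}}\sin\!\grklam{\tfrac{n\pi\vartheta_y}{\kappa_0}}\,I_{\nu_n}\!\grklam{\tfrac{|x||y|}{2\tau}},
\]
where $\nu_n:=n\pi/\kappa_0$ are the square roots of the Dirichlet eigenvalues of $-\partial_\vartheta^2$ on $(0,\kappa_0)$ and $I_\nu$ is the modified Bessel function of the first kind; in particular $G>0$, so the absolute value is harmless, and the critical exponent $\nu_1=\pi/\kappa_0$ of the hypothesis is precisely the lowest angular index.

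Setting $z:=|x||y|/(2\tau)$, I would then estimate the series in two regimes. In the near-vertex regime $z\lesssim1$ I would invoke the small-argument bound $I_\nu(z)\leq C\,(z/2)^\nu/\Gamma(\nu+1)$, so that the $n$-th term carries the radial factor $(|x||y|/\tau)^{\nu_n}$, while the vanishing of the angular eigenfunctions, $|\sin(n\pi\vartheta_x/\kappa_0)|\leq\min\{1,\,n\pi\,\dist(x)/(\kappa_0|x|)\}$ and symmetrically in $y$, supplies the boundary behaviour. Together these reproduce, for each $n$, a factor comparable to $R_{x,\tau}^{\nu_n}R_{y,\tau}^{\nu_n}$ with the possible gain of one power of $J_{x,\tau}$ and of $J_{y,\tau}$ near the edges. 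Peeling off $R_{x,\tau}^{\lambda_1-1}R_{y,\tau}^{\lambda_2-1}J_{x,\tau}J_{y,\tau}$ is legitimate precisely because $\lambda_1,\lambda_2<\nu_1\leq\nu_n$: the residual exponents $\nu_n-(\lambda_i-1)$ stay positive, so the leftover series $\sum_n(\cdots)^{\nu_n}/\Gamma(\nu_n+1)$ converges geometrically, and the prefactor $e^{-(|x|^2+|y|^2)/(4\tau)}\leq e^{-|x-y|^2/(8\tau)}$ already furnishes the Gaussian.

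In the complementary regime $z\gtrsim1$ one has $|x|,|y|\gtrsim\sqrt\tau$, hence $R_{x,\tau},R_{y,\tau}\approx1$, and the $R$-powers are harmless. Here I would use the large-order/large-argument uniform asymptotics of $I_\nu$ (Olver's expansion), for which $e^{-(|x|^2+|y|^2)/(4\tau)}I_{\nu_n}(z)$ produces the radial Gaussian $e^{-(|x|-|y|)^2/(4\tau)}$, while the angular sum $\sum_n\sin(n\pi\vartheta_x/\kappa_0)\sin(n\pi\vartheta_y/\kappa_0)(\cdots)$ behaves like an angular heat kernel at time $\sim\tau/(|x||y|)$ and, by a theta-function / Poisson-summation estimate, yields the transversal Gaussian $\exp(-c\,|x||y|(\vartheta_x-\vartheta_y)^2/\tau)$ together with the boundary factors $J_{x,\tau}J_{y,\tau}$. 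Since $|x-y|^2=(|x|-|y|)^2+2|x||y|(1-\cos(\vartheta_x-\vartheta_y))$, the two Gaussians combine into $e^{-\sigma|x-y|^2/\tau}$; this regime is essentially the known Dirichlet heat-kernel bound on a smooth domain. Patching the regimes across $z\sim1$ and choosing $\sigma$ slightly below $1/4$ to absorb the polynomial and matching losses completes the case $\tau=1$, and the scaling step removes the normalization.

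The main obstacle is the second regime: securing bounds on $I_{\nu_n}$ and on the angular series that are uniform in the order $\nu_n\to\infty$ and valid through the transition $z\sim1$, and in particular recovering the sharp off-diagonal Gaussian $e^{-\sigma|x-y|^2/\tau}$ rather than merely its radial part. A more robust alternative, which is the route behind~\cite{KozNaz2014} and avoids the explicit series, is to Laplace-transform in $\tau$ so as to pass to the resolvent Green function of $\Delta-\zeta$ on the cone, Mellin-transform in $|x|$ (with dual variable $s$) to diagonalize the scaling, and locate the poles of the resulting angular resolvent exactly at the exponents $\pm\nu_n$; holomorphy in the strip $\{\,|\Re s|<\nu_1\,\}$ then forces the admissible range $\lambda_i<\pi/\kappa_0$, and inverting the two transforms along contours inside that strip produces the stated pointwise estimate.
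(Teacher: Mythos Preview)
The paper does not prove this theorem at all: it is quoted verbatim from the preprint~\cite{KimLeeSeo2020+} (with a pointer to~\cite[Section~2]{KozNaz2014} for related material), and is used as a black box in the proof of Theorem~\ref{thm:LpEstimate}. There is therefore no ``paper's own proof'' to compare your attempt against.

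As for the sketch itself, it is a sensible outline of one of the standard routes to such wedge heat-kernel bounds. The scaling reduction to $\tau=1$ is correct, the Carslaw--type spectral formula you write is the right starting point, and the split according to $z=|x||y|/(2\tau)$ is natural. In the small-$z$ regime your use of $I_\nu(z)\le C(z/2)^\nu/\Gamma(\nu+1)$ is fine, but the sentence ``these reproduce \dots a factor comparable to $R_{x,\tau}^{\nu_n}R_{y,\tau}^{\nu_n}$'' is too quick: the Bessel bound gives powers of $|x|/\sqrt{\tau}$ and $|y|/\sqrt{\tau}$, not of $R_{x,\tau}$ and $R_{y,\tau}$, and in the asymmetric case $|x|\gg\sqrt{\tau}\gg|y|$ with $|x||y|\sim\tau$ these are not comparable. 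The rescue is that a portion of the prefactor $e^{-(|x|^2+|y|^2)/(4\tau)}$ can be sacrificed to convert $(|x|/\sqrt{\tau})^{\nu_n}$ into $R_{x,\tau}^{\nu_n}$ (since $r^a e^{-cr^2}$ is bounded), but this should be said explicitly, and you must check that enough Gaussian survives afterwards to yield $e^{-\sigma|x-y|^2/\tau}$. Similarly, extracting $J_{x,\tau}J_{y,\tau}$ from the sine factors requires tracking the interplay between $\dist(x)/|x|$ and $\sqrt{\tau}$ a bit more carefully than you do.

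You correctly identify the large-$z$ regime as the genuine obstacle: obtaining bounds on the angular sum that are uniform in the order $\nu_n\to\infty$ and that recover the full off-diagonal Gaussian (not just its radial part) via Poisson summation and Olver asymptotics is delicate, and what you have written there is a plan, not a proof. The Laplace--Mellin alternative you mention at the end is indeed closer to how~\cite{KozNaz2014} proceeds and is the more robust route; if you want a self-contained argument, that is the one to flesh out.
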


\begin{proof}[Proof of Theorem~\ref{thm:LpEstimate}]
For $x\in\domaina$ let $\mathrm{w}(x):=\tfrac{\dist(x)}{\dist_\circ(x)}$.
We will show that for arbitrary $0<\lambda_1,\lambda_2<\tfrac{\pi}{\kappa_0}$, the stochastic integral operator defined by
\[
(\cG_\textup{s}h)(t,x)
:=
\sum_{k=1}^\infty\int_0^t\int_\domaina G(t-s,x,y)\frac{\abs{x}^{\mu-2}}{\abs{x}^{\mu-1}}\frac{\mathrm{w}(x)^{\tilde{\mu}-2}}{\mathrm{w}(y)^{\tilde{\mu}-1}} h^k(s,y)\dd y\dd w^k_s,
\]
is a bounded operator from $L_{p,2,2}(\domaina,T;\ell_2)$ to $L_{p,2,2}(\domaina,T)$, provided
\begin{equation}\label{eq:range:mutmu}
\frac{2}{p'}-\lambda_1<\mu<\frac{2}{p'}+\lambda_2
\qquad\text{and}\qquad
\frac{1}{p'}<\tilde{\mu}<\frac{1}{p'}+1,
\end{equation} 
where $p':=p/(p-1)$ is the dual exponent of $p$.
This is equivalent to proving the assertion. 

For the rest of the proof we fix $\mu$ and $\tilde{\mu}$ satisfying~\eqref{eq:range:mutmu} and choose $\gamma_1,\tilde{\gamma}_1,\gamma_2,\tilde{\gamma}_2\in\bR$, such that the conditions
\begin{equation}\label{eq:range:g1tg1}
0<\tilde{\gamma}_2<2+\frac{1}{p'}-\tilde{\mu} 
\quad\text{and}\quad
0<\gamma_2+\tilde{\gamma}_2<\lambda_2-\mu+1+\frac{2}{p'}
\end{equation}
as well as
\begin{equation}\label{eq:range:g2tg2}
\frac{1}{p}<\tilde{\gamma}_1<\tilde{\mu}-\frac{1}{p'}+\frac{1}{p}
\quad\text{and}\quad
0<\gamma_1+\tilde{\gamma}_1<\lambda_1+\mu-\frac{2}{p'}
\end{equation}
are satisfied.
Note that such $\gamma_1,\tilde{\gamma}_1,\gamma_2,\tilde{\gamma}_2\in\bR$ exist since we assume~\eqref{eq:range:mutmu} to hold.

We first note that, by the Burkholder-Davis-Gundy inequality and the triangle inequality for $\ell_2$-valued integrals, we obtain for all $0<t<\infty$ and all $x\in\domaina$ that 
\begin{align*}
\E
\Abs{\mathcal{G}_\textup{s}h(t,x)}^p
&\leq C\, 
\E\reklam{\ssgrklam{\int_0^t \sum_{k=1}^\infty \ssgrklam{
\int_\cD G(t-s, x,y) \frac{\abs{x}^{\mu-2}}{\abs{x}^{\mu-1}}\frac{\mathrm{w}(x)^{\tilde{\mu}-2}}{\mathrm{w}(y)^{\tilde{\mu}-1}}\,h^k(s,y)\,\dy}^2\ds}^{p/2}}\\
&\leq C\,
\E\reklam{\ssgrklam{ \int^t_0 \ssgrklam{\int_{\cD}  \Abs{G(t-s, x,y)}\frac{\abs{x}^{\mu-2}}{\abs{x}^{\mu-1}}\frac{\mathrm{w}(x)^{\tilde{\mu}-2}}{\mathrm{w}(y)^{\tilde{\mu}-1}}\,\abs{h(s,y)}_{\ell_2} \,\dy}^2 \ds}^{p/2}},
\end{align*}
where $C$ depends only on $p$. The first inequality can be justified, e.g., by considering $\cG_\textup{s}h(t,x)$ as a real-valued stochastic integral w.r.t.\ a cylindrical Wiener process on $\ell_2$ and applying the Burkholder-Davis-Gundy inequality as stated in \cite[Theorem~4.36]{DaPZab2014}; if $p=2$ it is enough to apply It\^{o}'s isometry. 
Using the Green function estimate from Theorem~\ref{thm:GreenEstimate}
together with Hölder's inequality we obtain 
\begin{align*}
\E\abs{\grklam{\mathcal{G}_\textup{s}h}(t,x)}^p
&\leq C_p\,
\E\ssgeklam{
\ssgrklam{ 
\int_0^t 
\ssgrklam{
\int_{\domaina} 
R_{x,t-s}^{\lambda_1-1}R_{y,t-s}^{\lambda_2-1}J_{x,t-s}J_{y,t-s}\\
&\qquad\qquad\qquad\qquad\qquad\qquad\times\frac{\abs{x}^{\mu-2}}{\abs{y}^{\mu-1}}
\frac{\mathrm{w}(x)^{\tilde{\mu}-2}}{\mathrm{w}(y)^{\tilde{\mu}-1}}
\abs{h(s,y)}_{\ell_2}
\cdot
\frac{e^{-\sigma\frac{\abs{x-y}^2}{t-s}}}{t-s} \,\dy
}^{2}\,\ds}^{p/2}}\\
&\leq C_p\,
\E\ssgeklam{
\ssgrklam{ 
\int_0^t 
\ssgrklam{
\int_{\domaina} 
R_{x,t-s}^{\gamma_1 p }R_{y,t-s}^{\gamma_2p}
J_{x,t-s}^{\tilde{\gamma}_1 p }J_{y,t-s}^{\tilde{\gamma}_2p}
\abs{h(s,y)}_{\ell_2}^p
\cdot
\frac{e^{-\sigma\frac{\abs{x-y}^2}{t-s}}}{t-s} \,\dy
}^{2/p}\,\\
&\qquad\times
\ssgrklam{
\int_{\domaina} 
R_{x,t-s}^{(\lambda_1 -1-\gamma_1)p'}R_{y,t-s}^{(\lambda_2-1-\gamma_2)p'}
J_{x,t-s}^{(1-\tilde{\gamma}_1) p'}J_{y,t-s}^{(1-\tilde{\gamma}_2)p'}\\
&\qquad\qquad\qquad\qquad\qquad\qquad\times
\frac{\abs{x}^{(\mu-2)p'}}{\abs{y}^{(\mu-1)p'}}
\frac{\mathrm{w}(x)^{(\tilde{\mu}-2)p'}}{\mathrm{w}(y)^{(\tilde{\mu}-1)p'}}
\cdot
\frac{e^{-\sigma\frac{\abs{x-y}^2}{t-s}}}{t-s} \,\dy
}^{2/p'}\,\ds}^{p/2}
}\\
&=: C_p\,
\E\ssgeklam{\ssgrklam{\int_0^t I_{1,p}^{2/p}(x,s,t)\times I_{2,p}^{2/p'}(x,s,t)}^{p/2}},
\end{align*}
where
\begin{align*}
I_{1,p}(x,s,t)
&:=
\int_{\domaina} 
R_{x,t-s}^{\gamma_1 p }R_{y,t-s}^{\gamma_2p}
J_{x,t-s}^{\tilde{\gamma}_1 p }J_{y,t-s}^{\tilde{\gamma}_2p}
\abs{h(s,y)}_{\ell_2}^p
\cdot
\frac{e^{-\sigma\frac{\abs{x-y}^2}{t-s}}}{t-s} \,\dy
\\
\intertext{and}
I_{2,p}(x,s,t)
&:=
\int_{\domaina} 
R_{x,t-s}^{(\lambda_1 -1-\gamma_1)p'}R_{y,t-s}^{(\lambda_2-1-\gamma_2)p'}
J_{x,t-s}^{(1-\tilde{\gamma}_1) p'}J_{y,t-s}^{(1-\tilde{\gamma}_2)p'}\\
&\qquad\qquad\qquad\qquad\qquad\qquad\times
\frac{\abs{x}^{(\mu-2)p'}}{\abs{y}^{(\mu-1)p'}}
\frac{\mathrm{w}(x)^{(\tilde{\mu}-2)p'}}{\mathrm{w}(y)^{(\tilde{\mu}-1)p'}}
\cdot
\frac{e^{-\sigma\frac{\abs{x-y}^2}{t-s}}}{t-s} \,\dy.
\end{align*}
Note that, since only $h$ depends on $\omega\in\Omega$, so does $I_{1,p}$, whereas $I_{2,p}$ is purely deterministic. 
By another application of Hölder's inequality to the integral on $(0,t)$ with  H\"older conjugates $p/2\geq 1$ and $p/(p-2)$, we obtain
\begin{equation}
\E
\Abs{\grklam{\mathcal{G}_\textup{s}h}(t,x)}^p
\leq
\E 
\ssgeklam{\int^t_0 I_{1,p}(x,s,t)\,\ds} \cdot 
\gnnrm{I_{2,p}(x,\cdot,t)}{L_{\frac{2(p-1)}{p-2}}(0,t;\bR)}^{p-1},
\end{equation}
where we use the common convention $1/0:=\infty$ for $p=2$. 

\medskip

\noindent\textbf{Step 1.} We first deal with the norm of $I_{2,p}$ and prove that
\begin{equation}\label{eq:I2p:nnrmEstimate}
\gnnrm{I_{2,p}(x,\cdot,t)}{L_{\frac{2(p-1)}{p-2}}(0,t;\bR)}^{p-1}
\leq
C\, \dist(x)^{-2},\quad x\in\domaina.
\end{equation}
We distinguish between the cases $p=2$ and $p>2$, respectively.

\smallskip

\noindent\emph{Case 1. $p=2$.} Since by~\eqref{eq:range:g2tg2}, $2-\tilde{\mu}-\tilde{\gamma}_2>-1/2$ and $\lambda_2-\mu-1-\gamma_1-\tilde{\gamma_1}>-1$, we may apply Lemma~\ref{lem:b1b2a1a2s} for the estimating the inner integral, and obtain
\begin{align*}
&\sup_{0<s<t}\ssggklam{
R_{x,t-s}^{2(\lambda_1-1-\gamma_1)}
\abs{x}^{2(\mu-\tilde{\mu})}
J_{x,t-s}^{2(1-\tilde{\gamma}_1)}
\dist(y)^{2(\tilde{\mu}-2)}\\
&\qquad\qquad\qquad\times
\int_\domaina 
R_{y,t-s}^{2(\lambda_2-1-\gamma_2)}
\abs{y}^{2(\tilde{\mu}-\mu)}
J_{x,t-s}^{2(1-\tilde{\gamma}_2)}
\dist(x)^{2(1-\tilde{\mu})}
\frac{e^{-\sigma\frac{\abs{x-y}^2}{t-s}}}{t-s}
\dd y}
\\
=&\sup_{0<s<t}\ssggklam{
\frac{\abs{x}^{2(\lambda_1-1-\gamma_1+\mu-\tilde{\mu})}}{\grklam{\sqrt{t-s}+\abs{x}}^{2(\lambda_1-1-\gamma_1)}}
\frac{\dist(x)^{2(\tilde{\mu}-1-\tilde{\gamma}_1)}}{\grklam{\sqrt{t-s}+\dist(x)}^{2(1-\tilde{\gamma}_1)}}
\\
&\qquad\qquad\qquad\times
\int_\domaina 
\frac{\abs{y}^{2(\lambda_2-1-\gamma_2+\tilde{\mu}-\mu)}}{\grklam{\sqrt{t-s}+\abs{y}}^{2(\lambda_2-1-\gamma_2)}}
\frac{\dist(y)^{2(2-\tilde{\mu}-\tilde{\gamma}_2)}}{\grklam{\sqrt{t-s}+\dist(y)}^{2(1-\tilde{\gamma}_2)}}
\frac{e^{-\sigma\frac{\abs{x-y}^2}{t-s}}}{t-s}
\dd y}\\
\leq
&\,C \sup_{0<s<t}
\sggklam{
R_{x,t-s}^{2(\lambda_1-1-\gamma_1+\mu-\tilde{\mu})}
J_{x,t-s}^{2(\tilde{\mu}-\tilde{\gamma}_1)}}
\cdot \dist(x)^{-2}.
\end{align*}
Thus, Estimate~\eqref{eq:I2p:nnrmEstimate} is proven if
\[
\sup_{\substack{0<s<t\\ x\in\domaina}}
\sggklam{
R_{x,t-s}^{2(\lambda_1-1-\gamma_1+\mu-\tilde{\mu})}
J_{x,t-s}^{2(\tilde{\mu}-\tilde{\gamma}_1)}}
\leq 
C<\infty.
\]
But this is guaranteed by the fact that $0<J_{x,t-s}\leq R_{x,t-s}\leq 1$ and the restrictions~\eqref{eq:range:g1tg1} on $\gamma_1, \tilde{\gamma}_1$. 

\smallskip

\noindent\emph{Case 2. $p>2$.} In this case $\nnrm{I_{2,p}(x,\cdot,t)}{L_{\frac{2(p-1)}{p-2}}(0,t;\bR)}^{p-1}$ is given by
\begin{align*}
\ssgrklam{&
\int_0^t\ssgrklam{
R_{x,t-s}^{(\lambda_1 -1-\gamma_1)p'}
J_{x,t-s}^{(1-\tilde{\gamma}_1) p'}
\abs{x}^{(\mu-2)p'}
\mathrm{w}(x)^{(\tilde{\mu}-2)p'}\\
&\qquad\times
\int_{\domaina} 
R_{y,t-s}^{(\lambda_2-1-\gamma_2)p'}
J_{y,t-s}^{(1-\tilde{\gamma}_2)p'}
\abs{y}^{(1-\mu)p'}
\mathrm{w}(y)^{(1-\tilde{\mu})p'}
\cdot
\frac{e^{-\sigma\frac{\abs{x-y}^2}{t-s}}}{t-s} \,\dy}^{\frac{2(p-1)}{p-2}}\dd s}^{\frac{p-2}{2}}.
\end{align*}
By Lemma~\ref{lem:b1b2a1a2s}, the inner integral can be estimated by a constant times
\[
\grklam{\abs{x}+\sqrt{t-s}}^{(\tilde{\mu}-\mu)p'}
\grklam{\dist(x)+\sqrt{t-s}}^{(1-\tilde{\mu})p'},
\]
since $2-\tilde{\mu}-\tilde{\gamma}_2>-1/p'$ and $\lambda_2-\mu-1-\gamma_1-\tilde{\gamma_1}>-2/p'$ by~\eqref{eq:range:g2tg2}.
Inserting this and putting terms together yields
\begin{align*}
&\gnnrm{I_{2,p}(x,\cdot,t)}{L_{\frac{2(p-1)}{p-2}}(0,t;\bR)}^{p-1}\\
&\qquad\leq C 
\ssgrklam{
\int_0^t
R_{x,t-s}^{(\lambda_1 -1-\gamma_1+\mu-\tilde{\mu})\frac{2p}{p-2}}
\frac{\dist(x)^{\grklam{\tilde{\mu}-\tilde{\gamma}_1-\frac{p-2}{p}}\frac{2p}{p-2}}\dist(x)^{2-\frac{2p}{p-2}}}{\grklam{\dist(x)+\sqrt{t-s}}^{\grklam{\tilde{\mu}-\tilde{\gamma}_1-\frac{p-2}{p}}\frac{2p}{p-2}+2}}
\dd s}^{\frac{p-2}{2}}\\
&\qquad = C
\ssgrklam{
\int_0^t
R_{x,t-s}^{(\lambda_1 -1-\gamma_1+\mu-\tilde{\mu})\frac{2p}{p-2}}
\frac{\dist(x)^{\grklam{\tilde{\mu}-\tilde{\gamma}_1-\frac{p-2}{p}}\frac{2p}{p-2}}}{\grklam{\dist(x)+\sqrt{t-s}}^{\grklam{\tilde{\mu}-\tilde{\gamma}_1-\frac{p-2}{p}}\frac{2p}{p-2}+2}}
\dd s}^{\frac{p-2}{2}} \dist(x)^{-2}.
\end{align*}
Thus, Estimate~\eqref{eq:I2p:nnrmEstimate} follows if
\[
\sup_{\substack{x\in\domaina\\ t>0}}
\int_0^t
R_{x,t-s}^{(\lambda_1 -1-\gamma_1+\mu-\tilde{\mu})\frac{2p}{p-2}}
\frac{\dist(x)^{\grklam{\tilde{\mu}-\tilde{\gamma}_1-\frac{p-2}{p}}\frac{2p}{p-2}}}{\grklam{\dist(x)+\sqrt{t-s}}^{\grklam{\tilde{\mu}-\tilde{\gamma}_1-\frac{p-2}{p}}\frac{2p}{p-2}+2}}
\dd s
\leq C<\infty.
\]
But this follows from Lemma~\ref{lem:At} if
\[
\tilde{\gamma}_1<\tilde{\mu}-\frac{1}{p'}+\frac{1}{p}
\quad\text{and}\quad
\gamma_1+\tilde{\gamma}_1<\lambda_1+\mu-\frac{2}{p'},
\]
which is satisfied due to~\eqref{eq:range:g1tg1}.

\medskip

\noindent\textbf{Step 2.} We go back to the full estimate

\begin{align*}
\E &\gnnrm{\mathcal{G}_\textup{s}h}{L_p(\domaina\times (0,T))}^p\\
&=
\E
\int_0^\infty
\int_{\domaina}
\Abs{(\mathcal{G}_\textup{s}h)(t,x)}^p
\,\dx
\,\dt\\
&\leq
C\,
\E
\int_0^\infty
\int_{\domaina}
\int_0^t
\int_{\domaina} 
\frac{R_{x,t-s}^{\gamma_1 p }}{\dist(x)^2}R_{y,t-s}^{\gamma_2p}
J_{x,t-s}^{\tilde{\gamma}_1 p }J_{y,t-s}^{\tilde{\gamma}_2p}
\abs{h(s,y)}_{\ell_2}^p
\cdot
\frac{e^{-\sigma\frac{\abs{x-y}^2}{t-s}}}{t-s} \,\dy
\,\ds
\,\dx
\,\dt\\
&=
C\,
\E
\int_0^\infty
\int_{\domaina}
\ssgrklam{
\int_s^\infty
R_{y,t-s}^{\gamma_2p}J_{y,t-s}^{\tilde{\gamma}_2p}
\int_{\domaina} 
\frac{R_{x,t-s}^{\gamma_1 p}J_{x,t-s}^{\tilde{\gamma}_1 p }}{\dist(x)^2}
\cdot
\frac{e^{-\sigma\frac{\abs{x-y}^2}{t-s}}}{t-s} 
\,\dx
\,\dt }
\abs{h(s,y)}_{\ell_2}^p
\,\dy
\,\ds.
\end{align*}
Thus, it suffices to prove that
\begin{align*}
A:=\sup_{\substack{0<s<\infty\\ y\in\domaina}}\int_s^\infty
R_{y,t-s}^{\gamma_2p}J_{y,t-s}^{\tilde{\gamma}_2p}
\int_{\domaina} 
\frac{R_{x,t-s}^{\gamma_1 p}J_{x,t-s}^{\tilde{\gamma}_1 p }}{\dist(x)^2}
\cdot
\frac{e^{-\sigma\frac{\abs{x-y}^2}{t-s}}}{t-s} 
\,\dx
\,\dt
\leq C<\infty.
\end{align*}
For the inner integral we obtain, by another application of Lemma~\ref{lem:b1b2a1a2s},
\begin{align*}
\int_{\domaina} 
\frac{R_{x,t-s}^{\gamma_1 p}J_{x,t-s}^{\tilde{\gamma}_1 p }}{\dist(x)^2}
\cdot
\frac{e^{-\sigma\frac{\abs{x-y}^2}{t-s}}}{t-s} 
\,\dx
&=
\int_\domaina
R_{x,t-s}^{\gamma_1p}
J_{x,t-s}^{\tilde{\gamma}_1p-2}
\grklam{\sqrt{t-s}+\dist(x)}^{-2}
\cdot
\frac{e^{-\sigma\frac{\abs{x-y}^2}{t-s}}}{t-s} 
\,\dx\\
&\leq C\grklam{\sqrt{t-s}+\dist(y)}^{-2},
\end{align*}
since $\gamma_1+\tilde{\gamma}_1>0$ and $\tilde{\gamma}_1>1/p$ by~\eqref{eq:range:g1tg1}.
Thus,
\begin{align*}
A\leq C \sup_{\substack{0<s<\infty\\ y\in\domaina}}\int_s^\infty
R_{y,t-s}^{\gamma_2p}
\frac{\dist(y)^{\tilde{\gamma}_2p}}{\grklam{\sqrt{t-s}+\dist(y)}^{\tilde{\gamma}_2p+2}}
\,\dt
\leq C<\infty,
\end{align*}
since $\gamma_2+\tilde{\gamma}_2>0$ and $\tilde{\gamma}_2>0$ by~\eqref{eq:range:g2tg2}.
\end{proof}

With an analogous technique, we can prove the following $L_p$-estimate for the deterministic convolution~\eqref{eq:detconv}.

\begin{thm}\label{thm:LpEstimate:det}
Let  $1<p<\infty$ and let $\Theta,\theta\in\bR$ satisfy~\eqref{eq:range:thetaTheta}.
Then there exists a finite constant $C$, independent of $g$ and $T$, such that
\begin{align*}
\ssgnnrm{t\mapsto\int_0^t\int_\domaina G(t-s,\cdot,y)f(s,y)\dd y\dd s}{\bL_{p,\Theta-p,\theta-p}(\domaina,T)}\leq C\, \nnrm{f}{\bL_{p,\Theta+p,\theta+p}(\domaina,T)}.
\end{align*}
\end{thm}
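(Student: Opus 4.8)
The plan is to mimic the proof of Theorem~\ref{thm:LpEstimate}, exploiting that the deterministic convolution~\eqref{eq:detconv} carries no stochastic integral, so that the Burkholder--Davis--Gundy step is replaced by a pathwise pointwise bound. First I would reduce the assertion, exactly as in the stochastic case, to the $L_p$-boundedness of an explicit integral operator on the unweighted space. Writing $\mathrm{w}(x):=\dist(x)/\distv(x)$ and absorbing the weights of the source and target spaces into the kernel, the claim becomes: for all $0<\lambda_1,\lambda_2<\pi/\kappa_0$ the operator
\[
(\cG_\textup{d}h)(t,x):=\int_0^t\int_\domaina G(t-s,x,y)\,\frac{\abs{x}^{\mu-2}}{\abs{y}^{\mu}}\,\frac{\mathrm{w}(x)^{\tilde{\mu}-2}}{\mathrm{w}(y)^{\tilde{\mu}}}\,h(s,y)\dd y\dd s
\]
is bounded from $L_{p,2,2}(\domaina,T)$ to $L_{p,2,2}(\domaina,T)$ whenever $\mu,\tilde{\mu}$ satisfy~\eqref{eq:range:mutmu}. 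The only change compared with $\cG_\textup{s}$ is that the $y$-weight is heavier by one power of $\abs{y}$ and of $\mathrm{w}(y)$, namely $\abs{y}^{-\mu}\mathrm{w}(y)^{-\tilde{\mu}}$ in place of $\abs{y}^{1-\mu}\mathrm{w}(y)^{1-\tilde{\mu}}$; this reflects that $f$ is drawn from $\bL_{p,\Theta+p,\theta+p}$ rather than from $\bL_{p,\Theta,\theta}$, while the target space $\bL_{p,\Theta-p,\theta-p}$, and hence the $x$-weight, is unchanged. Since the $x$-side of the substitution is identical, the admissible range~\eqref{eq:range:mutmu} for $(\mu,\tilde{\mu})$, and thus the range~\eqref{eq:range:thetaTheta} for $(\Theta,\theta)$, is exactly the same as in Theorem~\ref{thm:LpEstimate}.

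For fixed $\omega$ I would estimate $\abs{\cG_\textup{d}h(t,x)}$ by the integral of $\abs{G(t-s,x,y)}$ times the weights times $\abs{h(s,y)}$, insert the Green function bound of Theorem~\ref{thm:GreenEstimate}, and then apply a single H\"older inequality on the joint measure $\dd y\dd s$ over $(0,t)\times\domaina$ with exponents $p$ and $p'$. This single application replaces the two nested H\"older steps forced by the $L_2$-in-time structure of Burkholder--Davis--Gundy, and is precisely the reason the result holds for all $1<p<\infty$ rather than only for $p\geq2$. Splitting off a factor carrying $\abs{h}^p$ with the weights $R_{x,t-s}^{\gamma_1 p}R_{y,t-s}^{\gamma_2 p}J_{x,t-s}^{\tilde{\gamma}_1 p}J_{y,t-s}^{\tilde{\gamma}_2 p}$ and the Gaussian, I obtain a bound $\abs{\cG_\textup{d}h(t,x)}^p\leq C\,\bigl(\int_0^t\int_\domaina \Psi_1\abs{h}^p\bigr)\bigl(\int_0^t\int_\domaina\Psi_2\bigr)^{p-1}$, where $\Psi_1$ has exactly the form of the integrand $I_{1,p}$ from the proof of Theorem~\ref{thm:LpEstimate} and $\Psi_2$ is the deterministic companion built from the complementary powers $(\lambda_i-1-\gamma_i)p'$ and $(1-\tilde{\gamma}_i)p'$ together with the shifted $y$-weight $\abs{y}^{-\mu p'}\mathrm{w}(y)^{-\tilde{\mu}p'}$.

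The argument then splits into two steps as before. In Step~1 I would bound the deterministic factor: performing the inner $y$-integral by Lemma~\ref{lem:b1b2a1a2s} and the subsequent $s$-integral by Lemma~\ref{lem:At}, one obtains $\bigl(\int_0^t\int_\domaina\Psi_2\dd y\dd s\bigr)^{p-1}\leq C\dist(x)^{-2}$. Here the single $s$-integral (rather than the $L_{2(p-1)/(p-2)}$-norm in time of the stochastic case) produces $\dist(x)^{2-2p'}$, and the identity $2-2p'=-2p'/p$ makes the $(p-1)$-th power collapse to precisely $\dist(x)^{-2}$. With this bound in hand, Step~2 is verbatim the stochastic Step~2: one integrates $\abs{\cG_\textup{d}h(t,x)}^p$ in $x$ and $t$, uses Tonelli to integrate the kernel against $\abs{h(s,y)}^p$, and reduces everything to the finiteness of the same supremum $A$, controlled by Lemma~\ref{lem:b1b2a1a2s} and Lemma~\ref{lem:At} under the lower bounds $\tilde{\gamma}_1>1/p$, $\tilde{\gamma}_2>0$, $\gamma_1+\tilde{\gamma}_1>0$, and $\gamma_2+\tilde{\gamma}_2>0$.

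I expect the main obstacle to be the bookkeeping of the auxiliary parameters $\gamma_1,\tilde{\gamma}_1,\gamma_2,\tilde{\gamma}_2$. Because the $y$-weight is one power heavier, the convergence conditions for the $y$-integral in Lemma~\ref{lem:b1b2a1a2s} tighten by one unit: the upper bounds in~\eqref{eq:range:g1tg1} must be lowered to $\tilde{\gamma}_2<1+\tfrac1{p'}-\tilde{\mu}$ and $\gamma_2+\tilde{\gamma}_2<\lambda_2-\mu+\tfrac2{p'}$, whereas the conditions~\eqref{eq:range:g2tg2} on $\gamma_1,\tilde{\gamma}_1$ remain unchanged. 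The delicate point is to check that these tightened constraints still admit a valid choice; this is guaranteed precisely by the upper bounds $\tilde{\mu}<\tfrac1{p'}+1$ and $\mu<\tfrac2{p'}+\lambda_2$ in~\eqref{eq:range:mutmu}, which keep the respective right-hand sides positive. Verifying that the same range~\eqref{eq:range:mutmu}, hence the same admissible set~\eqref{eq:range:thetaTheta} of weights as for the stochastic estimate, thus suffices is the one genuinely new computation; everything else transfers by the correspondence between $\cG_\textup{d}$ and $\cG_\textup{s}$ described above.
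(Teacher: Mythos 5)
Your proposal is correct and is exactly the argument the paper intends: the paper's own ``proof'' of Theorem~\ref{thm:LpEstimate:det} merely defers to the proof of Theorem~\ref{thm:LpEstimate}, and your adaptation fills in the details faithfully --- the reduction to the operator $\cG_\textup{d}$ with the one-power-heavier $y$-weight $\abs{y}^{-\mu}\mathrm{w}(y)^{-\tilde{\mu}}$, the single H\"older step with exponents $(p,p')$ in place of Burkholder--Davis--Gundy plus nested H\"older (which is indeed why $1<p<2$ is admissible here), the collapse $\dist(x)^{(2-2p')(p-1)}=\dist(x)^{-2}$ in Step~1, and the verbatim Step~2. Your bookkeeping also checks out: the tightened constraints $\tilde{\gamma}_2<1+\tfrac{1}{p'}-\tilde{\mu}$ and $\gamma_2+\tilde{\gamma}_2<\lambda_2-\mu+\tfrac{2}{p'}$ are nonvacuous precisely under the upper bounds of~\eqref{eq:range:mutmu}, while~\eqref{eq:range:g2tg2} is untouched, so the admissible range~\eqref{eq:range:thetaTheta} is unchanged.
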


\begin{proof}
Since the proof follows the lines of the proof of Theorem~\ref{thm:LpEstimate} very closely, it is left to the reader.
\end{proof}

\mysection{Weighted $L_p$-Sobolev regularity for the stochastic heat equation}\label{sec:MaxReg}

In this section we use the $L_p$-estimates proven in Section~\ref{sec:LpEstimateStoch} to establish existence 
and higher order regularity for the stochastic heat equation~\eqref{eq:SHE}.
We use weighted Sobolev spaces $H^n_{p,\Theta,\theta}(\domaina)$ with mixed weights to measure the spatial regularity. They are defined as follows:
For $n\in\bN_0$, $\Theta,\theta\in\bR$, and $1<p<\infty$, let $\wsob^n_{p,\Theta,\theta}(\domaina)$ be the collection of all (equivalence classes of) locally integrable real-valued functions $u$ on $\domaina$ such that
\[
\nnrm{u}{H^n_{p,\Theta,\theta}(\domaina)}^p
:=
\sum_{\abs{\alpha}\leq n}\int_\domaina \Abs{\dist^{\abs{\alpha}}D^\alpha u}^p\distv^{\theta-2}\sgrklam{\frac{\dist}{\distv}}^{\Theta-2}\dd x
=
\sum_{\abs{\alpha}\leq n} \nnrm{\dist^{\abs{\alpha}}D^\alpha u}{L_{p,\Theta,\theta}(\domaina,T)}^p
<\infty,
\]
where for $\alpha\in\bN_0^2$, $D^\alpha=D^\alpha_x$ is the $\alpha$ partial derivative with respect to $x\in\domaina$ and, as before, $\dist$ denotes the distance to the boundary $\partial\domaina$ while $\distv$ is the distance to the vertex $x_0:=0$ of the domain $\domaina$.
Moreover, we define $H^{-n}_{p,\Theta,\theta}(\domaina):=(H^n_{p',\Theta',\theta'}(\domaina))^*$ to be the dual of  $H^n_{p',\Theta',\theta'}(\domaina)$ with
\[
\frac{1}{p}+\frac{1}{p'}=1,\quad
\frac{\Theta}{p}+\frac{\Theta'}{p'}=2,\quad
\frac{\theta}{p}+\frac{\theta'}{p'}=2.
\]
For $\ell_2$-valued functions and $n\in\bZ$, the spaces $H^n_{p,\Theta,\theta}(\domaina;\ell_2)$ are defined analogously with $\abs{\cdot}$ replaced by $\abs{\cdot}_{\ell_2}$.
Obviously, $(\wsob^n_{p,\Theta,\theta}(\domaina),\nnrm{\cdot}{H^n_{p,\Theta,\theta}(\domaina)})$ and $(\wsob^n_{p,\Theta,\theta}(\domaina;\ell_2),\nnrm{\cdot}{H^n_{p,\Theta,\theta}(\domaina;\ell_2)})$ are separable, reflexive Banach spaces for every $n\in\bZ$. 

%
%

To formulate our conditions on the different parts of the equation we will use the spaces
\[
\bwsob^n_{p,\Theta,\theta}(\domaina,T):=
L_p(\Omega_T,\pred_T,\prob\otimes\dt;\wsob^n_{p,\Theta,\theta}(\domaina))
\]
of $p$-Bochner integrable $\wsob^n_{p,\Theta,\theta}(\domaina)$-valued predictable stochastic processes; $\bwsob^n_{p,\Theta,\theta}(\domaina,T;\ell_2)$ is defined analogously.
Obviously, for $n=0$, 
\[
L_{p,\Theta,\theta}(\domaina)=\wsob^0_{p,\Theta,\theta}(\domaina),
\quad
\bL_{p,\Theta,\theta}(\domaina,T)=\bwsob^0_{p,\Theta,\theta}(\domaina,T),
\quad\text{and}\quad
\bL_{p,\Theta,\theta}(\domaina,T;\ell_2)=\bwsob^0_{p,\Theta,\theta}(\domaina,T;\ell_2).
\]
Our analysis of Equation~\eqref{eq:SHE} will take place within the following spaces.

\begin{defn}\label{def:frH}
For $2\leq p<\infty$, $n\in\bN$, and $\Theta,\theta\in\bR$ we write $\frH^n_{p,\Theta,\theta,0}(\domaina,T)$ for the collection of all $u\in\bwsob^n_{p,\Theta-p,\theta-p}(\domaina,T)$ for which there exist $f\in \bwsob^{n-2}_{p,\Theta+p,\theta+p}(\domaina,T)$ and $g\in \bwsob^{n-1}_{p,\Theta,\theta}(\domaina,T;\ell_2)$, such that
\[
\mathrm{d}u=f\dd t + g^k\dd w^k_t,\quad t\in (0,T],
\]
on $\domaina$ in the sense of distributions with $u(0,\cdot)=0$, that is, for any $\varphi \in
\cont^{\infty}_{0}(\domaina)$, with probability one, the equality
\begin{equation}\label{eq:distribution}
(u(t,\cdot),\varphi)=   \int^{t}_{0}
(f(s,\cdot),\varphi) \dd s + \sum^{\infty}_{k=1} \int^{t}_{0}
(g^k(s,\cdot),\varphi)\,  dw^k_s
\end{equation}
holds for all $t \leq T$. 
In this situation
we also write
$$
\bD u:=f\qquad\text{and}\qquad \bS u :=g
$$
for the deterministic part and the stochastic part, respectively. Moreover, 
\[
\nnrm{u}{\frH^n_{p,\Theta,\theta,0}(\domaina,T)}
:=
\nnrm{u}{\bwsob^n_{p,\Theta-p,\theta-p}(\domaina,T)}
+
\nnrm{\bD u}{\bwsob^{n-2}_{p,\Theta+p,\theta+p}(\domaina,T)}
+
\nnrm{\bS u}{\bwsob^{n-1}_{p,\Theta,\theta}(\domaina,T;\ell_2)},
\quad
u\in \frH^n_{p,\Theta,\theta,0}(\domaina,T).
\]
\end{defn}

\begin{remark}\label{rem:frH}
\begin{enumerate}[leftmargin=*,label=\textup{(\roman*)}] 
\item In the setting of Definition~\ref{def:frH}, $(\frH^n_{p,\Theta,\theta,0}(\domaina,T),\nnrm{\cdot}{\frH^n_{p,\Theta,\theta,0}(\domaina,T)})$ is a Banach space. The latter is not immediately clear but can be proven, for instance, by extending~\cite[Lemma~2.7]{CioKimLee2019} appropriately. The details are presented in Appendix~\ref{sec:frH}.
\item\label{rem:frH:frK} Hardy's inequality and the fact that $\cont_0^\infty(\domaina)$ is densely embedded in $H^1_{p,\Theta,\theta}(\domaina)$ for all $1<p<\infty$ and $\Theta,\theta\in\bR$ (see Lemma~\ref{lem:wSob:test:dense}) can be used to verify that for arbitrary $\theta\in\bR$ and $1<p<\infty$, $H^1_{p,d-p,\theta-p}(\domaina)=\mathring{K}^1_{p,\theta-p}(\domaina)$ with equivalent norms, where $\mathring{K}^1_{p,\theta-p}(\domaina)$ is the closure of $\cont_0^\infty(\domaina)$ with respect to the norm
\[
\nnrm{u}{K^1_{p,\theta-p}(\domaina)}
=
\ssgrklam{\sum_{\abs{\alpha}\leq 1}\int_\domaina \Abs{\distv^{\abs{\alpha}}D^\alpha u}^p\distv^{\theta-p-2}\dd x}^{1/p},
\quad
u\in\cont_0^\infty(\domaina).
\]
 As a consequence, the space $\frwsob^1_{p,d,\theta,0}(\domaina,T)$ coincides (equivalent norms) with the space $\mathcal{K}^1_{p,\theta,0}(\domaina)$ introduced in \cite[Definition~2.2]{CioKimLee2019} and with the space $\mathfrak{K}^1_{p,\theta}(\domain,T)$ introduced in~\cite[Definition~3.4]{CioKimLee+2018}, 
see also \cite[Remark~2.3]{CioKimLee2019}.
\end{enumerate}
\end{remark}

In this article, Equation~\eqref{eq:SHE} is to be understood in the following way.

\begin{defn}
Let $2\leq p<\infty$, $n\in\bN$, and $\Theta,\theta\in\bR$. We say that $u$ is the solution to Eq.~\eqref{eq:SHE} in the class $\frH^n_{p,\Theta,\theta,0}(\domaina,T)$ if $u\in \frH^n_{p,\Theta,\theta,0}(\domaina,T)$ with
\[
\bD u=\Delta u +f
\qquad\text{and}\qquad
\bS u = g.
\]
\end{defn}

Our main result concerning existence and regularity for Equation~\eqref{eq:SHE} reads as follows.

\begin{thm}\label{thm:reg:main}
Let $2\leq p<\infty$, let $n\in\bN_0$, and let $\Theta,\theta\in\bR$ satisfy~\eqref{eq:range:thetaTheta}. 
Furthermore, assume that 
$f\in\bwsob^{(n-1)\lor 0}_{p,\Theta+p,\theta+p}(\domaina,T)$, $g\in\bwsob^n_{p,\Theta,\theta}(\domaina,T;\ell_2)$,
and let
\begin{equation}\label{eq:stochconv:sol}
u_{\bS,g}(t,x)
=
\sum_{k=1}^\infty\int^t_0 \int_{\domaina} G(t-s,x,y)g^k(s,y)\,\mathrm dy \,\mathrm dw^k_s   
\end{equation}
be the stochastic convolution, while
\begin{equation}\label{eq:detconv:sol}
u_{\bD,f}(t,x)
=
\int^t_0 \int_{\domaina} G(t-s,x,y)f(s,y)\dd y \dd s   
\end{equation}
is the deterministic convolution.
Then $u:=u_{\bD,f}+u_{\bS,g}$ is a solution to Eq.~\eqref{eq:SHE} in the class $\frwsob^{n+1}_{p,\Theta,\theta,0}(\domaina,T)$. Moreover,
\begin{equation}\label{eq:a-priori:1}
\nnrm{u}{\frwsob^{n+1}_{p,\Theta,\theta,0}(\domaina,T)}^p
\leq C\,
\grklam{
\nnrm{f}{\bwsob^{(n-1)\lor 0}_{p,\Theta+p,\theta+p}(\domaina,T)}^p
+
\nnrm{g}{\bwsob^n_{p,\Theta,\theta}(\domaina,T;\ell_2)}^p
}
\end{equation}
with a constant $C$ that does not depend on $T$.
\end{thm}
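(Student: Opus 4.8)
The plan is to take $u:=u_{\bD,f}+u_{\bS,g}$ as the candidate solution, to read off its base-level ($\bwsob^0$) regularity and bound from Theorems~\ref{thm:LpEstimate} and~\ref{thm:LpEstimate:det}, to check that it solves~\eqref{eq:SHE} distributionally, and then to lift the regularity to $\frH^{n+1}_{p,\Theta,\theta,0}(\domaina,T)$ by the localization machinery of~\cite{CioKimLee2019}. Because $\bwsob^n_{p,\Theta,\theta}(\domaina,T;\ell_2)\hookrightarrow\bL_{p,\Theta,\theta}(\domaina,T;\ell_2)$ and $\bwsob^{(n-1)\lor0}_{p,\Theta+p,\theta+p}(\domaina,T)\hookrightarrow\bL_{p,\Theta+p,\theta+p}(\domaina,T)$, the hypotheses on $g$ and $f$ in particular place them in the spaces required by those two theorems, while the Green-function bound of Theorem~\ref{thm:GreenEstimate} makes the integrals in~\eqref{eq:stochconv:sol} and~\eqref{eq:detconv:sol} converge, so that $u_{\bS,g}$ and $u_{\bD,f}$ are well-defined predictable processes. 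Adding the two estimates gives
\[
\nnrm{u}{\bL_{p,\Theta-p,\theta-p}(\domaina,T)}\leq C\grklam{\nnrm{f}{\bL_{p,\Theta+p,\theta+p}(\domaina,T)}+\nnrm{g}{\bL_{p,\Theta,\theta}(\domaina,T;\ell_2)}},
\]
with $C$ independent of $T$; this is the low-regularity solution whose existence over the full range~\eqref{eq:range:thetaTheta} was the missing ingredient.

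Next I would verify that $u$ solves~\eqref{eq:SHE} in the sense fixed before the statement, that is, that~\eqref{eq:distribution} holds with $\bD u=\Delta u+f$ and $\bS u=g$. Testing~\eqref{eq:stochconv:sol} and~\eqref{eq:detconv:sol} against $\varphi\in\cont^\infty_0(\domaina)$ and invoking the defining property of $G$, together with the deterministic and stochastic Fubini theorems (again justified by Theorem~\ref{thm:GreenEstimate}), shows that $u_{\bD,f}$ solves the deterministic heat equation with source $f$ and that $u_{\bS,g}$ solves $\dd v=\Delta v\dd t+g^k\dd w^k_t$, both with zero initial value; summing the two identities yields $\dd u=(\Delta u+f)\dd t+g^k\dd w^k_t$. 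Since the weights are designed so that $\Delta$ maps $\bwsob^{n+1}_{p,\Theta-p,\theta-p}(\domaina,T)$ boundedly into $\bwsob^{n-1}_{p,\Theta+p,\theta+p}(\domaina,T)$, once the regularity below is in hand the pair $(\bD u,\bS u)=(\Delta u+f,\,g)$ lies in the spaces demanded by Definition~\ref{def:frH} (recall $f\in\bwsob^{(n-1)\lor0}_{p,\Theta+p,\theta+p}\subseteq\bwsob^{n-1}_{p,\Theta+p,\theta+p}$), and hence $u\in\frH^{n+1}_{p,\Theta,\theta,0}(\domaina,T)$.

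It remains to promote $u$ from $\bwsob^0_{p,\Theta-p,\theta-p}$ to $\bwsob^{n+1}_{p,\Theta-p,\theta-p}$ and to prove~\eqref{eq:a-priori:1}, which is where I would call on the regularity theory of~\cite{CioKimLee2019}. Using the dilation-invariance of $\domaina$ under $x\mapsto\lambda x$, one decomposes $\domaina$ into dyadic pieces on which $\distv$ and $\dist/\distv$ are each comparable to a fixed scale, rescales each piece to a reference configuration where the boundary is flat or absent, applies the classical interior and $\cont^1$-boundary $L_p$-estimates for the heat equation, and sums the rescaled estimates against the mixed weight $\distv^{\theta-2}(\dist/\distv)^{\Theta-2}$; this reduces the $\bwsob^{n+1}$-bound to the $\bwsob^0$-bound already obtained, the higher derivatives being treated by the same scheme applied to the differentiated data (with the customary loss of only one derivative for the stochastic term). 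Approximating $(f,g)$ by smooth, compactly supported data, for which $u$ is classically regular, and using the completeness of $\frH^{n+1}_{p,\Theta,\theta,0}(\domaina,T)$ (Remark~\ref{rem:frH}) then upgrades the resulting a priori estimate to the asserted existence and bound.

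I expect the main obstacle to be exactly this final bookkeeping: one has to confirm that the admissible range~\eqref{eq:range:thetaTheta} of $(\Theta,\theta)$ is precisely the one for which the localization weights, the parabolic scaling factors (which tie the spatial dilation to the time variable and so threaten the $T$-independence of $C$), and the flat-boundary estimates are simultaneously summable. This is already the heart of the argument in~\cite{CioKimLee2019}, and it is what both forces the exponent constraints and must be handled carefully to keep the constant independent of $T$.
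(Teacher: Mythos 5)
Your proposal is correct in outline and rests on the same three pillars as the paper's own proof: the new estimates of Theorems~\ref{thm:LpEstimate} and~\ref{thm:LpEstimate:det} for the base-level bound on $\nnrm{u}{\bL_{p,\Theta-p,\theta-p}(\domaina,T)}$, the localization/regularity-lift machinery of~\cite{CioKimLee2019}, and a density-plus-completeness argument in $\frwsob^{n+1}_{p,\Theta,\theta,0}(\domaina,T)$. The differences lie in how the intermediate steps are discharged, and they are worth noting. First, where you verify the distributional identity~\eqref{eq:distribution} by hand (testing the convolutions against $\varphi\in\cont_0^\infty(\domaina)$ and invoking deterministic and stochastic Fubini theorems), the paper never redoes such a computation: in its Step~1 it restricts to data in the intersection $\bwsob^{(n-1)\lor 0}_{p,\Theta+p,\theta+p}(\domaina,T)\cap\bL_{p,d,\theta+p}(\domaina,T)$ (and the analogous space for $g$) and simply cites \cite[Theorem~3.7]{CioKimLee+2018}, which already asserts that $u=u_{\bD,f}+u_{\bS,g}$ is a solution in the class $\frwsob^1_{p,d,\theta,0}(\domaina,T)$ for such data. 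This is not merely a stylistic choice: for general $f,g$ in the hypothesis spaces over the full weight range~\eqref{eq:range:thetaTheta}, the stochastic Fubini interchange is not obviously justified by the pointwise Green-function bound of Theorem~\ref{thm:GreenEstimate} alone, so your verification should be carried out only for the approximating (``nice'') data and the equation then passed to the limit --- that is, your final approximation step must come \emph{before} the verification of the equation, which is exactly the ordering the paper uses (nice data first, density second). Second, where you propose to re-run the dyadic decomposition, rescaling, and summation against the mixed weight, the paper cites \cite[Theorem~2.5]{CioKimLee2019} as a black box; that theorem is precisely the estimate of $\nnrm{u}{\bwsob^{n+1}_{p,\Theta-p,\theta-p}(\domaina,T)}$ by $\nnrm{u}{\bL_{p,\Theta-p,\theta-p}(\domaina,T)}$ plus the data norms, with $T$-independent constant, so the ``bookkeeping'' you flag as the main obstacle has already been settled there. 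The only genuinely new ingredient this theorem needs --- and the reason the paper was written --- is the bound on $\nnrm{u}{\bL_{p,\Theta-p,\theta-p}(\domaina,T)}$ over the full range~\eqref{eq:range:thetaTheta}, which both you and the paper take from Theorems~\ref{thm:LpEstimate} and~\ref{thm:LpEstimate:det}.
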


\begin{proof}
\noindent\emph{Step 1.} We first consider `nice' forcing terms by assuming that $f\in\bwsob^{(n-1)\lor 0}_{p,\Theta+p,\theta+p}(\domaina,T)\cap \bL_{p,d,\theta+p}(\domaina,T)$ and  $g\in \bwsob^{n}_{p,\Theta,\theta}(\domaina,T;\ell_2)\cap \bL_{p,d,\theta}(\domaina,T;\ell_2)$.
Then we already know that $u=u_{\bD,f}+u_{\bS,g}$ is a solution to Eq.~\eqref{eq:SHE} in the class $\frwsob^1_{p,d,\theta,0}(\domaina,T)$ due to~\cite[Theorem~3.7]{CioKimLee+2018}, see also Remark~\ref{rem:frH}\ref{rem:frH:frK} above.
Moreover, by~\cite[Theorem~2.5]{CioKimLee2019} it holds that 
\begin{align*}
\nnrm{u}{\bwsob^{n+1}_{p,\Theta-p,\theta-p}(\domaina,T)}^p
\leq C\,
\sgrklam{
\nnrm{u}{\bL_{p,\Theta-p,\theta-p}(\domaina,T)}^p
+
\nnrm{f}{\bwsob^{(n-1)\lor 0}_{p,\Theta+p,\theta+p}(\domaina,T)}^p
+
\nnrm{g}{\bwsob^{n}_{p,\Theta,\theta}(\domaina,T;\ell_2)}^p
}
\end{align*}
with a constant that does not depend on $T$. Thus, by Theorems~\ref{thm:LpEstimate} and~\ref{thm:LpEstimate:det}, Estimate~\eqref{eq:a-priori:1} holds and $u$ is a solution to Eq.~\eqref{eq:SHE} in the class $\frwsob^{n+1}_{p,\Theta,\theta,0}(\domaina,T)$. 

\noindent\emph{Step 2.} Let us now assume that $f\in\bwsob^{(n-1)\lor 0}_{p,\Theta+p,\theta+p}(\domaina,T)$ and $g\in \bwsob^{n}_{p,\Theta,\theta}(\domaina,T;\ell_2)$. Then there exist sequences $(f_{j})_{j\in\bN}\subseteq \bwsob^{(n-1)\lor 0}_{p,\Theta+p,\theta+p}(\domaina,T)\cap \bL_{p,d,\theta+p}(\domaina,T)$ and $(g_j)_{j\in\bN}\subseteq \bwsob^n_{p,\Theta,\theta}(\domaina,T;\ell_2)\cap \bL_{p,d,\theta}(\domaina,T;\ell_2)$, such that
\[
\lim_{j\to\infty}
\sgrklam{
\nnrm{f_j-f}{\bwsob^{(n-1)\lor 0}_{p,\Theta+p,\theta+p}(\domaina,T)}
+
\nnrm{g_j-g}{\bwsob^n_{p,\Theta,\theta}(\domaina,T;\ell_2)}}=0.
\]
By Step~1,  for all $j\in\bN$ it holds that $u_j:=u_{\bD,f_j}+u_{\bS,g_j}\in\frwsob^{n+1}_{p,\Theta,\theta}(\domaina,T)$ and 
\[
\dd u_j=(\Delta u_j + f_j)\dd t + g^k_j\dd w^k_t, \quad t\in (0,T],
\]
on $\domaina$ in the sense of distributions with $u_j(0,\cdot)=0$.
Moreover, 
\[
\nnrm{u_j}{\frwsob^{n+1}_{p,\Theta,\theta,0}(\domaina,T)}^p
\leq C\,
\grklam{
\nnrm{f_j}{\bwsob^{(n-1)\lor 0}_{p,\Theta+p,\theta+p}(\domaina,T)}^p
+
\nnrm{g_j}{\bwsob^n_{p,\Theta,\theta}(\domaina,T;\ell_2)}^p},
\quad j\in\bN,
\] 
and
\[
\nnrm{u_j-u_i}{\frwsob^{n+1}_{p,\Theta,\theta,0}(\domaina,T)}
\leq C\,
\grklam{
\nnrm{f_j-f_i}{\bwsob^{(n-1)\lor 0}_{p,\Theta+p,\theta+p}(\domaina,T)}^p
+
\nnrm{g_j-g_i}{\bwsob^n_{p,\Theta,\theta}(\domaina,T;\ell_2)}},
\qquad i,j\in\bN,
\]
with a constant $C$ independent of $T$. 
Thus, due to the completeness of $\frwsob^{n+1}_{p,\Theta,\theta,0}(\cD,T)$, there exists a limit $\tilde{u}\in\frwsob^{n+1}_{p,\Theta,\theta,0}(\domaina,T)$ of $(u_j)_{j\in\bN}$ in $\frwsob^{n+1}_{p,\Theta,\theta,0}(\domaina,T)$.
Its deterministic part is given by $\bD \tilde{u}=\Delta \tilde{u}$ and its stochastic part is $\bS \tilde{u}=g$. 
In other words, $\tilde{u}$ is a solution to Eq.~\eqref{eq:SHE} in the class $\frwsob^{n+1}_{p,\Theta,\theta,0}(\domaina,T)$.
It remains to prove that $\tilde{u}$ is a version of $u=u_{\bD,f}+u_{\bS,g}$. To see this note that for all $j\in\bN$,
\begin{align*}
\nnrm{\tilde{u}-u}{\bL_{p,\Theta-p,\theta-p}(\domaina,T)}
\leq 
\nnrm{\tilde{u}-u_j}{\bL_{p,\Theta-p,\theta-p}(\domaina,T)}
+
\nnrm{u_j-u}{\bL_{p,\Theta-p,\theta-p}(\domaina,T)}.
\end{align*}
The first norm on the right hand side goes to zero for $j\to\infty$ since $(u_j)_{j\in\bN}$ converges to $\tilde{u}$ in $\frwsob^1_{p,\Theta,\theta,0}(\domaina,T)$ and therefore in $\bL_{p,\Theta-p,\theta-p}(\domaina,T)$ for $j\to\infty$. The second term converges to zero due to the estimates from Theorem~\ref{thm:LpEstimate} and Theorem~\ref{thm:LpEstimate:det}. Thus, $\tilde{u}=u$ in $\bL_{p,\Theta-p,\theta-p}(\domaina,T)$ and~\eqref{eq:a-priori:1} holds.
\end{proof}

\mysection{Auxiliary estimates}\label{sec:MainAuxiliaryEstimate}

The proof of Theorem~\ref{thm:LpEstimate} is based on the two auxiliary results presented in Lemma~\ref{lem:b1b2a1a2s} and Lemma~\ref{lem:At} below. As before, we fix an arbitrary angle $0<\kappa_0<2\pi$ and write $\domaina=\domaina_{\kappa_0}$ for the dihedral angle defined in~\eqref{domain:angular}. Recall the definition of $R_{x,c}$ and $J_{x,c}$ for $x\in\domaina$ and $c>0$ from~\eqref{eq:RxJx}.

\begin{lemma}\label{lem:b1b2a1a2s}
For all $\sigma>0$, all $\alpha_1,\alpha_2\in\bR$ and all $\beta_1,\beta_2\in\bR$ such that $\beta_2>-1$ and $\beta_1+\beta_2>-2$, there exists a constant $C=C(\sigma,\beta_1,\beta_2,\alpha_1,\alpha_2)<\infty$ such that
\begin{equation}\label{eq:b1b2a1a2s}
\sup_{x\in\domaina}
\int_\domaina
\ssgrklam{\frac{\abs{y}}{1+\abs{y}}}^{\beta_1}
\ssgrklam{\frac{\dist(y)}{1+\dist(y)}}^{\beta_2}
\ssgrklam{\frac{1+\abs{y}}{1+\abs{x}}}^{\alpha_1}
\ssgrklam{\frac{1+\dist(y)}{1+\dist(x)}}^{\alpha_2}
e^{-\sigma\abs{x-y}^2}\dd y
\leq C.
\end{equation}
In particular, for all $s<t$ and all $x\in\domaina$ it holds that
\begin{align*}
\int_\domaina
R_{y,t-s}^{\beta_1}
J_{y,t-s}^{\beta_2}
\grklam{\sqrt{t-s}+\abs{y}}^{\alpha_1}
&\grklam{\sqrt{t-s}+\dist(y)}^{\alpha_2}
\frac{e^{-\sigma\frac{\abs{x-y}^2}{t-s}}}{t-s}\dd y\\
&\qquad\leq C\grklam{\sqrt{t-s}+\abs{x}}^{\alpha_1}
\grklam{\sqrt{t-s}+\dist(x)}^{\alpha_2}
\end{align*}
with $C\in (0,\infty)$ independent of $x,t,s$.

\end{lemma}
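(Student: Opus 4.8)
The plan is to first establish the main estimate~\eqref{eq:b1b2a1a2s} (the case with normalizing constant $1$) and then to deduce the ``in particular'' statement from it by exploiting the scale invariance of the cone $\domaina$ under parabolic scaling. For the main estimate I would begin by removing the two quotient factors carrying the $x$-dependence. Since both $y\mapsto\abs{y}$ and $y\mapsto\dist(y)$ are $1$-Lipschitz and $0\in\partial\domaina$, one has $\abs{\,\abs{y}-\abs{x}\,}\le\abs{x-y}$ and $\abs{\dist(y)-\dist(x)}\le\abs{x-y}$, which yields, for both signs of $\alpha_1,\alpha_2$,
\[
\ssgrklam{\frac{1+\abs{y}}{1+\abs{x}}}^{\alpha_1}\ssgrklam{\frac{1+\dist(y)}{1+\dist(x)}}^{\alpha_2}\le(1+\abs{x-y})^{\abs{\alpha_1}+\abs{\alpha_2}}.
\]
The polynomial factor is absorbed into the Gaussian at the cost of replacing $\sigma$ by some $0<\sigma'<\sigma$, so that the problem reduces to proving
\[
\sup_{x\in\domaina}\int_\domaina w(y)\,e^{-\sigma'\abs{x-y}^2}\dd y\le C,\qquad w(y):=\ssgrklam{\frac{\abs{y}}{1+\abs{y}}}^{\beta_1}\ssgrklam{\frac{\dist(y)}{1+\dist(y)}}^{\beta_2}.
\]

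The heart of the matter is the uniform local integrability of the weight, namely $M:=\sup_{x\in\domaina}\int_{B(x,1)\cap\domaina}w(y)\dd y<\infty$, for which I would distinguish three regimes according to the location of $x$. Away from both the vertex and the boundary one has $w\asymp1$ and nothing is to be done. For $x$ close to $\partial\domaina$ but far from the vertex, $\abs{y}\asymp\abs{x}$ is large so the first factor is comparable to $1$, the boundary is essentially flat on scale one, and integrating $\dist(y)^{\beta_2}$ in the normal direction converges precisely because $\beta_2>-1$. For $x$ near the vertex I would pass to polar coordinates $y=(r\cos\vartheta,r\sin\vartheta)$ with $r=\abs{y}$, $\vartheta\in(0,\kappa_0)$; there $\dist(y)\asymp r\,\psi(\vartheta)$ with $\psi(\vartheta)\asymp\min\{\vartheta,\kappa_0-\vartheta\}$ near the two rays, so that $w(y)\asymp r^{\beta_1+\beta_2}\psi(\vartheta)^{\beta_2}$ and $\dd y=r\,\dd r\,\dd\vartheta$. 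The radial integral then converges iff $\beta_1+\beta_2+1>-1$, i.e.\ $\beta_1+\beta_2>-2$, while the angular integral converges iff $\beta_2>-1$; these are exactly the two hypotheses, so the stated conditions are not only sufficient but sharp, matching boundary integrability and vertex integrability respectively.

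With $M<\infty$ in hand, the full supremum follows by a routine annular decomposition. Writing the integral as $\sum_{k\ge0}\int_{\{k\le\abs{x-y}<k+1\}\cap\domaina}$, the Gaussian contributes a factor $e^{-\sigma'k^2}$ on the $k$-th shell, which can be covered by $O(k^2)$ unit balls, each carrying weight at most $M$; summing $\sum_{k\ge0}C\,(k+1)^2 M\,e^{-\sigma'k^2}<\infty$ gives a bound independent of $x$ and proves~\eqref{eq:b1b2a1a2s}.

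Finally, for the ``in particular'' statement I would substitute $y=\sqrt{t-s}\,\eta$ and $x=\sqrt{t-s}\,\xi$. Because $\domaina$ is a cone, $\eta\in\domaina\Leftrightarrow y\in\domaina$, and $\abs{y}=\sqrt{t-s}\,\abs{\eta}$, $\dist(y)=\sqrt{t-s}\,\dist(\eta)$, so that $R_{y,t-s}$ and $J_{y,t-s}$ turn into the $c=1$ quotients in $\eta$, while $(\sqrt{t-s}+\abs{y})^{\alpha_1}=(t-s)^{\alpha_1/2}(1+\abs{\eta})^{\alpha_1}$ and likewise for $\dist$; the measure transforms as $\tfrac{1}{t-s}\dd y=\dd\eta$ and the exponent becomes $e^{-\sigma\abs{\xi-\eta}^2}$. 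Pulling the factors $(\sqrt{t-s}+\abs{x})^{\alpha_1}(\sqrt{t-s}+\dist(x))^{\alpha_2}$ out front and rewriting the remaining $(1+\abs{\eta})^{\alpha_1}$, $(1+\dist(\eta))^{\alpha_2}$ as the quotients $\big(\tfrac{1+\abs{\eta}}{1+\abs{\xi}}\big)^{\alpha_1}$, $\big(\tfrac{1+\dist(\eta)}{1+\dist(\xi)}\big)^{\alpha_2}$ reduces the integral exactly to~\eqref{eq:b1b2a1a2s} applied at $\xi$, yielding the claim with a constant independent of $x,t,s$. I expect the main obstacle to be the vertex analysis in the local integrability step, where the interplay between the radial weight $r^{\beta_1+\beta_2}$ and the angular weight $\psi(\vartheta)^{\beta_2}$ must be disentangled to pin down the two sharp conditions on the $\beta$'s.
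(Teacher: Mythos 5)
Your proposal is correct, and two of its three components coincide with the paper's own argument: the ``in particular'' claim is obtained by exactly the same substitution $y=\sqrt{t-s}\,\eta$, $x=\sqrt{t-s}\,\xi$, exploiting that $\domaina$ is a cone so that $\abs{\cdot}$ and $\dist(\cdot)$ are positively homogeneous; and your uniform local integrability analysis of the weight $w$ (polar coordinates at the vertex producing precisely the conditions $\beta_1+\beta_2>-2$ and $\beta_2>-1$, one-dimensional integration of $\dist^{\beta_2}$ near the straight part of the boundary, and summation of unit-scale pieces against the Gaussian tail) is in substance the paper's Lemma~\ref{lem:b1b2s}. Where you genuinely diverge is the treatment of the factors carrying $\alpha_1$ and $\alpha_2$: the paper splits the integrand by H\"older's inequality and devotes two further lemmas to these factors, namely Lemma~\ref{lem:b1a1s} for the ratio $(1+\abs{y})/(1+\abs{x})$ on $\bR^d$ and Lemma~\ref{lem:a2s} for the ratio $(1+\dist(y))/(1+\dist(x))$ on $\domaina$, the latter requiring a lengthy case analysis (sub-domains $\domaina_1,\domaina_2,\domaina_3$, the dichotomy $\kappa_0\leq\pi$ versus $\kappa_0>\pi$, reflection and rotation arguments). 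You instead use the $1$-Lipschitz continuity of $\abs{\cdot}$ and of $\dist(\cdot)$ to obtain the pointwise bound
\[
\ssgrklam{\frac{1+\abs{y}}{1+\abs{x}}}^{\alpha_1}
\ssgrklam{\frac{1+\dist(y)}{1+\dist(x)}}^{\alpha_2}
\leq
\grklam{1+\abs{x-y}}^{\abs{\alpha_1}+\abs{\alpha_2}},
\]
and absorb the polynomial into the Gaussian by passing from $\sigma$ to some $\sigma'<\sigma$. Since the constant is allowed to depend on $\sigma$, $\alpha_1$, $\alpha_2$, this is perfectly legitimate, and it makes the paper's Lemmas~\ref{lem:b1a1s} and~\ref{lem:a2s} superfluous; the only thing the paper's route preserves is the exact Gaussian rate $\sigma$ in every intermediate estimate, which is immaterial for the application. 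Two sketch-level points should be tidied up when writing your argument out in full: in the covering step the centers of the unit balls need not lie in $\domaina$, so define the local bound $M$ as a supremum over centers in $\bR^2$ (the same three-regime analysis applies); and for a thin wedge a point far from the vertex can be within unit distance of \emph{both} rays, so the near-boundary integration should be performed separately for each ray, using that $\dist=\min(\dist_1,\dist_2)$ implies $\dist^{\beta_2}\leq\dist_1^{\beta_2}+\dist_2^{\beta_2}$ when $\beta_2<0$. Both repairs are routine.
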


\begin{proof}
The first estimate follows from the special cases treated in Lemma~\ref{lem:b1b2s}, Lemma~\ref{lem:b1a1s}, and Lemma~\ref{lem:a2s} below by a simple application of Hölder's inequality.
And the second estimate follows from the first one, since $\abs{ax}=a\abs{x}$ and $\dist(ax)=a\dist(x)$ for all $a>0$ and all $x\in\domaina$, so that
\begin{align*}
&\int_\domaina
\ssgrklam{\frac{\abs{y}}{\sqrt{t-s}+\abs{y}}}^{\beta_1}
\ssgrklam{\frac{\dist(y)}{\sqrt{t-s}+\dist(y)}}^{\beta_2}\\
&\qquad\qquad\qquad\qquad\times
\ssgrklam{\frac{\sqrt{t-s}+\abs{y}}{\sqrt{t-s}+\abs{x}}}^{\alpha_1}
\ssgrklam{\frac{\sqrt{t-s}+\dist(y)}{\sqrt{t-s}+\dist(x)}}^{\alpha_2}
\frac{e^{-\sigma\frac{\abs{x-y}^2}{t-s}}}{t-s}\dd y\\
= &\int_\domaina
\ssgrklam{\frac{\abs{\tfrac{1}{\sqrt{t-s}}y}}{1+\abs{\tfrac{1}{\sqrt{t-s}}y}}}^{\beta_1}
\ssgrklam{\frac{\dist(\tfrac{1}{\sqrt{t-s}}y)}{1+\dist(\tfrac{1}{\sqrt{t-s}}y)}}^{\beta_2}\\
&\qquad\qquad\qquad\qquad\times
\ssgrklam{\frac{1+\abs{\tfrac{1}{\sqrt{t-s}}y}}{1+\abs{\tfrac{1}{\sqrt{t-s}}x}}}^{\alpha_1}
\ssgrklam{\frac{1+\dist(\tfrac{1}{\sqrt{t-s}}y)}{1+\dist(\tfrac{1}{\sqrt{t-s}}x)}}^{\alpha_2}
\frac{e^{-\sigma\Abs{\tfrac{1}{\sqrt{t-s}}(x-y)}^2}}{t-s}\dd y \\
= & \int_\domaina
\ssgrklam{\frac{\abs{z}}{1+\abs{z}}}^{\beta_1}
\ssgrklam{\frac{\dist(z)}{1+\dist(z)}}^{\beta_2}\\
&\qquad\qquad\qquad\qquad\times
\ssgrklam{\frac{1+\abs{z}}{1+\abs{\tfrac{1}{\sqrt{t-s}}x}}}^{\alpha_1}
\ssgrklam{\frac{1+\dist(z)}{1+\dist(\tfrac{1}{\sqrt{t-s}}x)}}^{\alpha_2}
e^{-\sigma\Abs{\tfrac{1}{\sqrt{t-s}}x-z}^2}\dd z\\
\leq &\, C(\sigma,\beta_1,\beta_2,\alpha_1,\alpha_2)<\infty.\qedhere
\end{align*}
\end{proof}

We first prove~\eqref{eq:b1b2a1a2s} with $\alpha_1=\alpha_2=0$.

\begin{lemma}\label{lem:b1b2s}
For all $\sigma>0$ and all $\beta_1,\beta_2\in\bR$ such that $\beta_2>-1$ and $\beta_1+\beta_2>-2$, there exists a constant $C=C(\sigma,\beta_1,\beta_2)<\infty$ such that
\begin{align*}
\sup_{x\in\domaina}
\int_\domaina
\ssgrklam{\frac{\abs{y}}{1+\abs{y}}}^{\beta_1}
\ssgrklam{\frac{\dist(y)}{1+\dist(y)}}^{\beta_2}
e^{-\sigma\abs{x-y}^2}\dd y
\leq C.
\end{align*}
\end{lemma}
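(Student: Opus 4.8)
The plan is to first replace the two bounded weight factors by their scale-invariant models and then to split $\domaina$ into a neighbourhood of the vertex and its complement. Since $\tfrac12\min(t,1)\le\tfrac{t}{1+t}\le\min(t,1)$ for every $t\ge0$, raising to a real power $\beta_i$ changes each factor only by a multiplicative constant depending on $\beta_i$; hence it suffices to bound
\[
\sup_{x\in\domaina}\int_\domaina\min(\abs{y},1)^{\beta_1}\,\min(\dist(y),1)^{\beta_2}\,e^{-\sigma\abs{x-y}^2}\dd y
\]
uniformly in $x\in\domaina$, the resulting constant depending only on $\sigma,\beta_1,\beta_2$.

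First I would dispose of the region $\{\abs{y}>2\}$, where $\min(\abs{y},1)^{\beta_1}=1$. If $\beta_2\ge0$ the remaining factor is at most $1$ and the integral is bounded by $\int_{\bR^2}e^{-\sigma\abs{x-y}^2}\dd y=\pi/\sigma$. If $\beta_2<0$, I would use that $\partial\domaina$ consists of the two edges, so that $\dist=\min(\delta_1,\delta_2)$ with $\delta_i$ the distance to the $i$-th edge, whence
\[
\min(\dist(y),1)^{\beta_2}\le\min(\delta_1(y),1)^{\beta_2}+\min(\delta_2(y),1)^{\beta_2}\le\min(d_1(y),1)^{\beta_2}+\min(d_2(y),1)^{\beta_2},
\]
where $d_i(y)\le\delta_i(y)$ denotes the distance to the full line through the $i$-th edge. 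Extending the integration to all of $\bR^2$ and rotating so that this line becomes $\{s_2=0\}$, Fubini splits each summand into $\int_\bR e^{-\sigma(s_1-a_1)^2}\dd s_1\cdot\int_\bR\min(\abs{s_2},1)^{\beta_2}e^{-\sigma(s_2-a_2)^2}\dd s_2$, and the one-dimensional bound $\sup_a\int_\bR\min(\abs{s},1)^{\beta_2}e^{-\sigma(s-a)^2}\dd s\le\tfrac{2}{\beta_2+1}+\sqrt{\pi/\sigma}$, valid precisely because $\beta_2>-1$, yields a bound uniform in $x$.

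The heart of the matter, and the step I expect to be the main obstacle, is the neighbourhood $\{\abs{y}\le2\}$ of the vertex, where I would simply bound $e^{-\sigma\abs{x-y}^2}\le1$ and show that the weight is integrable. Because $\domaina$ is a cone, $\dist$ is positively homogeneous of degree one, so in polar coordinates $y=\abs{y}(\cos\vartheta,\sin\vartheta)$ one has $\dist(y)=\abs{y}\,g(\vartheta)$ with $g(\vartheta):=\dist(\cos\vartheta,\sin\vartheta)$ continuous and positive on $(0,\kappa_0)$ and vanishing linearly at the endpoints, i.e.\ $g(\vartheta)\asymp\min(\vartheta,\kappa_0-\vartheta)$. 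On the subregion $\{\abs{y}\le1\}$ the integral then factorises as
\[
\int_0^1 r^{\beta_1+\beta_2+1}\dd r\int_0^{\kappa_0}g(\vartheta)^{\beta_2}\dd\vartheta,
\]
the radial factor being finite exactly when $\beta_1+\beta_2>-2$ and the angular factor exactly when $\beta_2>-1$; the bounded annulus $\{1<\abs{y}\le2\}$, on which $\min(\abs{y},1)^{\beta_1}=1$, is integrable for the same reason $\beta_2>-1$. Collecting the two regions gives the claimed uniform bound.

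The careful points are the verification that $g$ vanishes to exactly first order at the edge angles for every $\kappa_0\in(0,2\pi)$, so that the angular integrability threshold is precisely $\beta_2>-1$, together with the elementary but crucial one-dimensional estimate above; everything else reduces to Fubini's theorem and the uniform integrability of the Gaussian kernel.
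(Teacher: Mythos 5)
Your proof is correct and follows essentially the same route as the paper's: both replace $\tfrac{t}{1+t}$ by $\min(t,1)$, treat the vertex region by homogeneity in polar coordinates (the radial integral needing $\beta_1+\beta_2>-2$, the angular one $\beta_2>-1$), and handle the region near the edges but away from the vertex by flattening to a one-dimensional Gaussian-weighted integral across the edge, which converges precisely because $\beta_2>-1$. The only differences are bookkeeping: you split by $\abs{y}\lessgtr 2$ while the paper splits by $\dist(y)\lessgtr 1$ and then by $\abs{y}\lessgtr 1$, and your reduction via distances to the full lines through the edges is a slightly more explicit version of the paper's estimate of the corresponding term.
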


\begin{proof}
We first prove that, under the given assumptions on $\beta_1,\beta_2\in\bR$ it holds that
\begin{equation}\label{eq:intb1b2:1}
\sup_{x\in\domaina}
\int_{\domaina}\one_{\{\rho\leq 1\}}(y)\ssgrklam{\frac{\abs{y}}{1+\abs{y}}}^{\beta_1}
\ssgrklam{\frac{\dist(y)}{1+\dist(y)}}^{\beta_2}e^{-\sigma\abs{x-y}^2}\dd y<\infty.
\end{equation}
To see this, first note that for arbitrary $a\geq 0$ it holds that
\[
\frac{a}{1+a}\leq \grklam{a\land 1}\leq 2\frac{a}{1+a}.
\]
Therefore,
\begin{align*}
\int_{\domaina}&\one_{\{\rho\leq 1\}}(y)\ssgrklam{\frac{\abs{y}}{1+\abs{y}}}^{\beta_1}
\ssgrklam{\frac{\dist(y)}{1+\dist(y)}}^{\beta_2}
e^{-\sigma\abs{x-y}^2}\dd y\\
&\leq C_{\beta_1,\beta_2}\ssgrklam{
\int_{\domaina}\one_{\{\rho_\circ\leq 1\}}(y)\abs{y}^{\beta_1}
\rho(y)^{\beta_2}\dd y
+
\int_{\domaina}\one_{\{\rho\leq 1,\rho_\circ\geq 1\}}(y)
\rho(y)^{\beta_2}e^{-\sigma\abs{x-y}^2}\dd y}\\
&=:C_{\beta_1,\beta_2}\grklam{I + I\!I},
\end{align*}
where
\begin{align*}
I
&=
\int_0^1\int_0^{\frac{\pi}{2}\land\frac{\kappa_0}{2}}r^{\beta_1} r^{\beta_2}\sin(\vartheta)^{\beta_2} r\dd r\dd \vartheta\\
&\qquad+
\int_0^1\int_{\frac{\pi}{2}\land\frac{\kappa_0}{2}}^{\kappa_0-\rklam{\frac{\pi}{2}\land\frac{\kappa_0}{2}}}r^{\beta_1} r^{\beta_2} r\dd r\dd \vartheta
+
\int_0^1\int_{\kappa_0-\rklam{\frac{\pi}{2}\land\frac{\kappa_0}{2}}}^{\kappa_0} r^{\beta_1} r^{\beta_2}\sin(\kappa_0-\vartheta)^{\beta_2} r\dd r\dd \vartheta\\
&\leq C_{\beta_2}
\int_0^1 r^{\beta_1+\beta_2+1}\dd r
\int_0^{\pi/2}\sin(\vartheta)^{\beta_2}\dd\vartheta 
\leq C_{\beta_1,\beta_2}<\infty,
\end{align*}
since $\beta_1+\beta_2>-2$, and
\begin{align*}
I\!I\leq 2 \int_0^\infty\int_0^1 y_2^{\beta_2}\dd y_2\cdot e^{-\sigma\abs{x_1-y_1}^2} \dd y_1\leq C_{\beta_2,\sigma}<\infty,
\end{align*}
since $\beta_2>-1$ by assumption. Thus,~\eqref{eq:intb1b2:1} holds. Moreover,
\begin{align*}
\int_{\domaina}\one_{\{\rho\geq 1\}}(y)&\ssgrklam{\frac{\abs{y}}{1+\abs{y}}}^{\beta_1}
\ssgrklam{\frac{\dist(y)}{1+\dist(y)}}^{\beta_2}e^{-\sigma\abs{x-y}^2}\dd y\\
&\leq 
C_{\beta_1,\beta_2}
\int_{\bR^2}e^{-\sigma\abs{x-y}^2}\dd y
=
C_{\beta_1,\beta_2} \int_{\bR^2}e^{-\sigma\abs{y}^2}\dd y =C_{\beta_1,\beta_2,\sigma}<\infty,
\end{align*}
which, together with~\eqref{eq:intb1b2:1}, proves the assertion.
\end{proof}

Next, we prove an estimate on $\bR^d$ with $d\in\bN$, which has~\eqref{eq:b1b2a1a2s} with $\alpha_2=\beta_2=0$ as a special case.

\begin{lemma}\label{lem:b1a1s}
For all $d\in\bN$, all $\sigma>0$, all $\alpha\in\bR$, and all $\beta>-d$, there exists a constant $C=C(d,\sigma,\alpha,\beta)<\infty$, such that
\begin{align*}
\sup_{x\in\bR^d}
\int_{\bR^d}
\ssgrklam{\frac{\abs{y}}{1+\abs{y}}}^{\beta}
\ssgrklam{\frac{1+\abs{y}}{1+\abs{x}}}^{\alpha}
e^{-\sigma\abs{x-y}^2}\dd y
\leq C.
\end{align*}
\end{lemma}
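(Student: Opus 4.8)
The plan is to decouple the two factors and then reduce everything to the case $\alpha=0$, which I handle by separating the behaviour near and far from the origin. The coupling factor $\bigl(\tfrac{1+\abs{y}}{1+\abs{x}}\bigr)^\alpha$ is the only place, apart from the Gaussian, where $x$ and $y$ interact, so I would first absorb it into a slightly weaker exponential and thereby eliminate the $x$-dependence of the remaining integrand.

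First I would note that, by the triangle inequality $\abs{y}\le\abs{x}+\abs{x-y}$ and $\abs{x}\le\abs{y}+\abs{x-y}$,
\[
\frac{1+\abs{y}}{1+\abs{x}}\le 1+\frac{\abs{x-y}}{1+\abs{x}}\le 1+\abs{x-y}
\quad\text{and}\quad
\frac{1+\abs{x}}{1+\abs{y}}\le 1+\abs{x-y}.
\]
Hence, regardless of the sign of $\alpha$, one has $\bigl(\tfrac{1+\abs{y}}{1+\abs{x}}\bigr)^\alpha\le (1+\abs{x-y})^{\abs{\alpha}}$. Since polynomial growth is dominated by Gaussian decay, for any fixed $0<\sigma'<\sigma$ there is a constant $C=C(\abs{\alpha},\sigma,\sigma')<\infty$ with $(1+r)^{\abs{\alpha}}e^{-\sigma r^2}\le C\,e^{-\sigma' r^2}$ for all $r\ge 0$. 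Applying this with $r=\abs{x-y}$ reduces the assertion to the bound $\sup_{x\in\bR^d}\int_{\bR^d}\bigl(\tfrac{\abs{y}}{1+\abs{y}}\bigr)^\beta e^{-\sigma'\abs{x-y}^2}\,\dd y<\infty$, i.e.\ exactly the case $\alpha=0$ with $\sigma$ replaced by $\sigma'$.

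Next I would split $\bR^d=\{\abs{y}\le 1\}\cup\{\abs{y}>1\}$. On $\{\abs{y}>1\}$ the quantity $\tfrac{\abs{y}}{1+\abs{y}}$ lies in $[\tfrac12,1)$, so $\bigl(\tfrac{\abs{y}}{1+\abs{y}}\bigr)^\beta\le 2^{\abs{\beta}}$ and the leftover integral $\int_{\bR^d}e^{-\sigma'\abs{x-y}^2}\,\dd y=(\pi/\sigma')^{d/2}$ is finite and independent of $x$. On $\{\abs{y}\le 1\}$ I would use $\tfrac{\abs{y}}{2}\le\tfrac{\abs{y}}{1+\abs{y}}\le\abs{y}$ together with $e^{-\sigma'\abs{x-y}^2}\le 1$ to bound the contribution by $C_\beta\int_{\{\abs{y}\le 1\}}\abs{y}^\beta\,\dd y$, a finite constant (again independent of $x$) by a routine computation in polar coordinates. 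Summing the two contributions yields the desired uniform bound.

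The only delicate point is the hypothesis $\beta>-d$: it is precisely what makes the near-origin singularity $\abs{y}^\beta$ locally integrable in $\bR^d$, so it cannot be relaxed. I do not expect any genuine obstacle here; the estimate $\tfrac{1+\abs{y}}{1+\abs{x}}\le 1+\abs{x-y}$ is the crux, since it turns the coupled factor into a harmless polynomial perturbation of the Gaussian and lets me discard the $x$-dependence entirely, after which a single polar-coordinates computation finishes the proof.
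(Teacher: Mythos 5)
Your proof is correct, but it takes a genuinely different route from the paper's. The paper decouples the two factors with H\"older's inequality, reducing to the two cases $\alpha=0$ (handled essentially as you do, by splitting at $\abs{y}=1$) and $\beta=0$; the latter case is then split again according to the sign of $\alpha$, with $\alpha>0$ treated via $(1+\abs{y})^\alpha\leq C_\alpha\geklam{(1+\abs{x})^\alpha+\abs{x-y}^\alpha}$ and Gaussian moments, and $\alpha<0$ requiring a further decomposition at $\abs{y}=\abs{x}/2$, where the near-origin part is controlled by the volume of the ball times the Gaussian decay $e^{-\sigma\abs{x}^2/4}$. You instead dispose of the coupling factor in one stroke via the Peetre-type inequality $\tfrac{1+\abs{y}}{1+\abs{x}}\leq 1+\abs{x-y}$ (valid with the roles of $x$ and $y$ exchanged as well, hence covering both signs of $\alpha$ uniformly), absorb the resulting polynomial $(1+\abs{x-y})^{\abs{\alpha}}$ into a slightly weaker Gaussian $e^{-\sigma'\abs{x-y}^2}$, and thereby reduce everything to the $\alpha=0$ case. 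Your argument is shorter, avoids both the H\"older decoupling and the case analysis on the sign of $\alpha$, and makes transparent that $\beta>-d$ is the only genuine constraint; the paper's approach is more piecemeal but its building blocks (notably the $\abs{y}\leq\abs{x}/2$ splitting) are reused in the subsequent Lemma concerning the boundary-distance weight, which is presumably why it is organized that way.
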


\begin{proof}
Throughout this proof, the letter $C$ is used for a finite positive constant that does not depend on $x$. This constant may vary at any appearance. 
Note that, due to Hölder's inequality, it is enough to prove the statement for $\alpha=0$ and $\beta>-d$ on the one hand and for $\alpha\in\bR$ and $\beta=0$ on the other hand.
We start with the first case:

\smallskip

\noindent \emph{Case 1. $\alpha=0$ and $\beta>-d$.} In this case, we have to show that
\[
\int_{\bR^d}
e^{-\sigma\abs{x-y}^2}
\ssgrklam{\frac{\abs{y}}{1+\abs{y}}}^\beta
\,\dy
\leq C.
\]
We argue as follows:
\begin{align*}
\int_{\bR^d}
e^{-\sigma\abs{x-y}^2}
&\ssgrklam{\frac{\abs{y}}{1+\abs{y}}}^\beta
\,\dy\\
&=
\int_{\abs{y}\leq 1}
e^{-\sigma\abs{x-y}^2}
\ssgrklam{\frac{\abs{y}}{1+\abs{y}}}^\beta
\,\dy
+
\int_{\abs{y}\geq 1}
e^{-\sigma\abs{x-y}^2}
\ssgrklam{\frac{\abs{y}}{1+\abs{y}}}^\beta
\,\dy\\
&\leq 
C_\beta
\int_{\abs{y}\leq 1}
e^{-\sigma\abs{x-y}^2}
\abs{y}^\beta
\,\dy
+
C_\beta
\int_{\abs{y}\geq 1}
e^{-\sigma\abs{x-y}^2}
\,\dy\\
&\leq 
C_\beta
\int_{\abs{y}\leq 1}
\abs{y}^\beta
\,\dy
+
C_\beta
\int_{\bR^d}
e^{-\sigma\abs{x-y}^2}
\,\dy
\leq
C_{\beta,\sigma,d}<\infty,
\end{align*}
since $\beta>-d$ and the value of the last integral does not depend on $x$.

\smallskip

\noindent\emph{Case 2. $\beta=0$.} We distinguish two sub-cases. 

\smallskip

\noindent\emph{Case 2.1. $\beta=0$ and $\alpha>0$.}
Recall that for arbitrary $\alpha>0$ and $a,b\geq 0$,
\[
2^{(1\land \alpha)-1}(a^\alpha+b^\alpha)
\leq
(a+b)^\alpha
\leq
2^{(1\lor \alpha)-1}(a^\alpha+b^\alpha).
\] 
Using this, we obtain
\begin{align*}
\int_{\bR^d}
e^{-\sigma\abs{x-y}^2}
&
\grklam{1+\abs{y}}^\alpha
\,\dy\\
&\leq
C_\alpha
\int_{\bR^d}
e^{-\sigma\abs{x-y}^2}
\geklam{\grklam{1+\abs{x}}^\alpha + \abs{y-x}^\alpha}
\,\dy\\
&\leq
C_\alpha
\grklam{1+\abs{x}}^\alpha 
\int_{\bR^d}
e^{-\sigma\abs{x-y}^2}
\,\dy
+
C_\alpha
\int_{\bR^d}
\abs{y-x}^\alpha
e^{-\sigma\abs{x-y}^2}
\,\dy\\
&\leq
C_{\alpha} \grklam{1+\abs{x}}^\alpha
\int_{\bR^d}
e^{-\sigma\abs{y}^2}
\,\dy
+
C_\alpha
\int_{\bR^d}
\abs{y}^\alpha
e^{-\sigma\abs{y}^2}
\,\dy
\leq C_{\alpha,\sigma,d},
\end{align*}
since $\ggklam{y\mapsto\abs{y}}\in L_\alpha(\bR^d,e^{-\sigma\abs{y}^2}\mathrm dy)$ for all $\alpha\geq 0$.

\smallskip

\noindent\emph{Case 2.2. $\beta=0$ and $\alpha<0$.} This is the rather difficult case. We start by splitting the integral the following way:
\begin{align*}
\int_{\bR^d}
e^{-\sigma\abs{x-y}^2}
\grklam{1+\abs{y}}^\alpha
\,\dy
=
\ssgrklam{\int_{\abs{y}\leq \frac{\abs{x}}{2}} + \int_{\abs{y}\geq \frac{\abs{x}}{2}}}
e^{-\sigma\abs{x-y}^2}
\grklam{1+\abs{y}}^\alpha
\,\dy.
\end{align*}
Since $\alpha<0$, the second integral on the right hand side can be estimated as follows:
\begin{align*}
\int_{\abs{y}\geq \frac{\abs{x}}{2}}
e^{-\sigma\abs{x-y}^2}
\grklam{1+\abs{y}}^\alpha
\,\dy
&\leq
C_\alpha
\grklam{1+\abs{x}}^\alpha
\int_{\abs{y}\geq \frac{\abs{x}}{2}}
e^{-\sigma\abs{x-y}^2}
\,\dy\\
&\leq
C_\alpha
\grklam{1+\abs{x}}^\alpha
\int_{\bR^d}
e^{-\sigma\abs{y}^2}
\,\dy
\leq
C_{\alpha,\sigma,d}(1+\abs{x})^\alpha.
\end{align*}
Since $\abs{y-x}\geq \abs{x}/2$ if $\abs{y}\leq \abs{x}/2$, the other part of the integral ($\abs{y}\leq \abs{x}/2$) can be treated as follows:
\begin{align*}
\int_{\abs{y}\leq \frac{\abs{x}}{2}}
e^{-\sigma\abs{x-y}^2}
\grklam{1+\abs{y}}^\alpha
\,\dy
&\leq
\int_{\abs{y}\leq \frac{\abs{x}}{2}}
e^{-\sigma\abs{x-y}^2}
\,\dy\\
&\leq
\int_{\abs{y}\leq \frac{\abs{x}}{2}}
e^{-\sigma\frac{\abs{x}^2}{4}}
\,\dy\\
&=
e^{-\sigma\frac{\abs{x}^2}{4}}
\int_{\abs{y}\leq \frac{\abs{x}}{2}}
\,\dy\\
&\leq
C_{d}\, \abs{x}^d \,e^{-\sigma\frac{\abs{x}^2}{4}}
\leq
C_{d}\cdot
e^{-\sigma\frac{\abs{x}^2}{4}}
\grklam{1+\abs{x}^d}. 
\end{align*}
Thus,
\begin{align*}
\int_{\abs{y}\leq \frac{\abs{x}}{2}}
&e^{-\sigma\abs{x-y}^2}
\ssgrklam{\frac{1+\abs{y}}{1+\abs{x}}}^\alpha
\,\dy\\
&=
\int_{\abs{y}\leq \frac{\abs{x}}{2}}
e^{-\sigma\abs{x-y}^2}
\grklam{1+\abs{y}}^\alpha
\,\dy
\cdot
\grklam{1+\abs{x}}^{-\alpha}
\\
&\leq
C_{d}
\cdot
e^{-\sigma\frac{\abs{x}^2}{4}}
\grklam{1+\abs{x}^d}
\grklam{1+\abs{x}}^{-\alpha}
\leq C_d\cdot e^{-\sigma\frac{\abs{x}^2}{4}}\grklam{1+\abs{x}}^{d-\alpha}.
\end{align*}
Since $\alpha<0$, the second term on the right hand side is continuous w.r.t.\ $x\in\bR^d$ and it converges to zero for $\abs{x}\to\infty$; thus, it is bounded by a constant that depends only on $\alpha$, $d$, and $\sigma$.
\end{proof}

The last piece needed in the proof of Lemma~\ref{lem:b1b2a1a2s} is the special case $\beta_1=\beta_2=\alpha_1=0$ in~\eqref{eq:b1b2a1a2s}.

\begin{lemma}\label{lem:a2s}
For all $\sigma>0$ and all $\alpha\in\bR$, there exists a constant $C=C(\sigma,\alpha)<\infty$, such that
\begin{align*}
\sup_{x\in\domaina}
\int_{\domaina}
\ssgrklam{\frac{1+\dist(y)}{1+\dist(x)}}^{\alpha}
e^{-\sigma\abs{x-y}^2}\dd y
\leq C.
\end{align*}
\end{lemma}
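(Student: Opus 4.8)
The plan is to isolate the one structural fact that drives this estimate: the weight function $\dist=\mathrm{dist}(\cdot,\partial\domaina)$ is $1$-Lipschitz, being the distance to the closed set $\partial\domaina$. Consequently $\abs{\dist(x)-\dist(y)}\leq\abs{x-y}$ for all $x,y\in\domaina$, and this single inequality will let me treat all $\alpha\in\bR$ at once, without the sign-of-$\alpha$ case distinction used in Lemma~\ref{lem:b1a1s}.

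First I would turn the Lipschitz bound into a two-sided control of the weight ratio. From $1+\dist(y)\leq 1+\dist(x)+\abs{x-y}\leq(1+\dist(x))(1+\abs{x-y})$ (the last step because $\dist(x)\abs{x-y}\geq 0$), and the analogous inequality with $x$ and $y$ interchanged, one gets
\[
(1+\abs{x-y})^{-1}\leq\frac{1+\dist(y)}{1+\dist(x)}\leq 1+\abs{x-y},
\qquad x,y\in\domaina.
\]
Raising this to the power $\alpha$ yields the clean pointwise estimate $\ssgrklam{\tfrac{1+\dist(y)}{1+\dist(x)}}^{\alpha}\leq(1+\abs{x-y})^{\abs{\alpha}}$, valid for every $\alpha\in\bR$ uniformly in $x,y\in\domaina$.

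With this in hand the remaining step is routine. I would bound the integrand by $(1+\abs{x-y})^{\abs{\alpha}}e^{-\sigma\abs{x-y}^2}$, enlarge the region of integration from $\domaina$ to $\bR^2$ (the integrand is nonnegative), and substitute $z=x-y$ to obtain
\[
\int_{\domaina}\ssgrklam{\frac{1+\dist(y)}{1+\dist(x)}}^{\alpha}e^{-\sigma\abs{x-y}^2}\dd y
\leq
\int_{\bR^2}(1+\abs{z})^{\abs{\alpha}}e^{-\sigma\abs{z}^2}\dd z
=:C(\sigma,\alpha),
\]
which is finite (the Gaussian dominates any power of $\abs{z}$) and manifestly independent of $x$. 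Taking the supremum over $x\in\domaina$ then gives the assertion.

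There is essentially no hard step here; the only point requiring care is the uniformity in $x$, and that is exactly what the Lipschitz-based two-sided ratio bound delivers. In particular, this unified argument handles every $\alpha\in\bR$ simultaneously and sidesteps the case split on the sign of $\alpha$, together with the associated partition of the domain of integration, that appeared in the proof of Lemma~\ref{lem:b1a1s}. The simplification is possible precisely because the weight here involves only the regularized ratio of $1+\dist$ and no genuinely singular factor, so that no local integrability issue arises.
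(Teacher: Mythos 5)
Your proof is correct, and it takes a genuinely different and considerably shorter route than the paper's. The paper splits according to the sign of $\alpha$: for $\alpha>0$ it decomposes $\domaina$ into the three sectors $\domaina_1,\domaina_2,\domaina_3$, treats $\kappa_0\leq\pi$ and $\kappa_0>\pi$ separately, and reduces each piece to the Euclidean estimate of Lemma~\ref{lem:b1a1s} via one-dimensional Fubini-type bounds (plus a reflection for $\domaina_3$); for $\alpha<0$ it splits the integration region according to $\dist(y)<\dist(x)/2$ versus $\dist(y)\geq\dist(x)/2$, using that the first event forces $\abs{x-y}>\dist(x)/2$ and hence Gaussian smallness. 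You replace all of this with the single observation that $\dist$, being the distance to the fixed closed set $\partial\domaina$, is $1$-Lipschitz, which gives the uniform two-sided bound $(1+\abs{x-y})^{-1}\leq(1+\dist(y))/(1+\dist(x))\leq 1+\abs{x-y}$ and hence the pointwise bound $\bigl(\tfrac{1+\dist(y)}{1+\dist(x)}\bigr)^{\alpha}\leq(1+\abs{x-y})^{\abs{\alpha}}$ for every $\alpha\in\bR$; the lemma then follows from one Gaussian moment integral over $\bR^2$. Your approach buys uniformity in $\alpha$ with no case split, no use of the angular geometry whatsoever (the statement then holds verbatim for any domain in any dimension), and manifest uniformity in $x$. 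The paper's approach buys essentially nothing here beyond stylistic consistency with the neighbouring Lemmas~\ref{lem:b1b2s} and~\ref{lem:b1a1s}, where the weights $\abs{y}^{\beta_1}$ and $\dist(y)^{\beta_2}$ with negative exponents are genuinely singular and do require the local-integrability analysis near the vertex and the boundary; as you correctly note, that obstruction is absent in the present lemma because $1+\dist$ is bounded away from zero.
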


\begin{proof}
We distinguish two cases.

\smallskip

\noindent\emph{Case 1. $\alpha>0$.}
We split the domain into the following subdomains
\begin{align*}
\domaina_1&:=\ggklam{x=(r\cos(\vartheta),r\sin(\vartheta))\in\bR^2\colon 0<r<\infty,0<\vartheta<\grklam{\tfrac{\pi}{2}\land\tfrac{\kappa_0}{2}}}\\
\domaina_2&:=\ggklam{x=(r\cos(\vartheta),r\sin(\vartheta))\in\bR^2\colon 0<r<\infty,\grklam{\tfrac{\pi}{2}\land\tfrac{\kappa_0}{2}}<\vartheta<\kappa_0-\grklam{\tfrac{\pi}{2}\land\tfrac{\kappa_0}{2}}}\\
\domaina_3&:=\ggklam{x=(r\cos(\vartheta),r\sin(\vartheta))\in\bR^2\colon 0<r<\infty,\kappa_0-\grklam{\tfrac{\pi}{2}\land\tfrac{\kappa_0}{2}}<\vartheta<\kappa_0}
\end{align*}
and distinguish three sub-cases.

\smallskip

\noindent\emph{Case 1.1. $x\in\domaina_2$.} In this case, $\rho(x)=\abs{x}$, so that the assertion follows from Lemma~\ref{lem:b1a1s} since $\rho(y)\leq\abs{y}$ for all $y\in\domaina$.

\smallskip

\noindent\emph{Case 1.2. $x\in\domaina_1$.} First note that $\rho(x)=x_2$ for $x=(x_1,x_2)\in\domaina_1$. If $\kappa_0\leq \pi$, then 
\begin{align*}
\int_{\domaina}
\grklam{1+\dist(y)}^{\alpha}
e^{-\sigma\abs{x-y}^2}\dd y
&=
2
\int_{\domaina_1}
\grklam{1+\dist(y)}^{\alpha}
e^{-\sigma\abs{x-y}^2}\dd y\\
&\leq 
2\int_0^\infty
\int_0^\infty
\grklam{1+y_2}^\alpha e^{-\sigma\abs{x_2-y_2}^2}\dd y_2\cdot e^{-\sigma\abs{x_1-y_1}^2}\dd y_1,
\end{align*} 
so that an application of Lemma~\ref{lem:b1a1s} with $d=1$ and $\beta=0$ yields
\begin{align*}
\int_{\domaina_1}
\grklam{1+\dist(y)}^{\alpha}
e^{-\sigma\abs{x-y}^2}\dd y
\leq
C_{\alpha,\sigma} \grklam{1+x_2}^\alpha
\int_\bR
e^{-\sigma\abs{y_1}^2}\dd y_1
\leq 
C_{\alpha,\sigma}\grklam{1+\dist(x)}^\alpha.
\end{align*}
If $\kappa_0>\pi$ we split the integral the following way (convention: $\bR^d_-:=\{x\in\bR^d\colon x_1<0\}$ and $\bR^2_+:=\{x\in\bR^d\colon x_1>0\}$):
\begin{align*}
\int_{\domaina}&
\grklam{1+\dist(y)}^{\alpha}
e^{-\sigma\abs{x-y}^2}\dd y\\
&=
\ssgrklam{
\int_{\domaina_1}
+
\int_{(\domaina_2\cup\domaina_3)\cap\bR^2_-}
+
\int_{\domaina_3\cap \bR^2_+}}
\grklam{1+\dist(y)}^{\alpha}
e^{-\sigma\abs{x-y}^2}\dd y\\
&=:I+I\!I+I\!I\!I.
\end{align*}
Similar arguments as in the case $\kappa_0\leq \pi$ lead to
\begin{align*}
I
\leq
C_{\alpha,\sigma} \grklam{1+x_2}^\alpha
\int_\bR
e^{-\sigma\abs{y_1}^2}\dd y_1
\leq 
C_{\alpha,\sigma}\grklam{1+\dist(x)}^\alpha.
\end{align*}
For estimating $I\!I$ we first note that for all $x_1>0$, $y_1\in\bR$, that
\[
\one_{(-\infty,0)}(y_1)\cdot e^{-\sigma\abs{x_1-y_1}^2}
\leq
\one_{(-\infty,0)}(y_1)\cdot e^{-\sigma\abs{y_1}^2}.
\]
Thus, by Lemma~\ref{lem:b1a1s} with $d=2$ and $\beta=0$,
\begin{align*}
I\!I
&=
\int_{(\domaina_2\cup\domaina_3)\cap\bR^2_-}
\grklam{1+\dist(y)}^{\alpha}
e^{-\sigma\abs{x-y}^2}\dd y\\
&\leq
\int_{\bR^2_-}
\grklam{1+\abs{y}}^{\alpha}
e^{-\sigma\abs{(0,x_2)-y}^2}\dd y
\leq
C_{\alpha,\sigma}\grklam{1+\abs{(0,x_2)}}^\alpha
=
C_{\alpha,\sigma}\grklam{1+\dist(x)}^\alpha.
\end{align*}
Finally, if $\domaina_3\cap \bR^2_+\neq\emptyset$, we write $\tilde{\dist}:=\mathrm{dist}(\cdot,(\bR^1_+\times\{0\}))$ and find that
\begin{align*}
I\!I\!I
&=
\int_{\domaina_3\cap \bR^2_+}
\grklam{1+\dist(y)}^{\alpha}
e^{-\sigma\abs{x-y}^2}\dd y\\
&\leq
\int_{\domaina_3\cap \bR^2_+}
\grklam{1+\tilde{\dist}(y)}^{\alpha}
e^{-\sigma\abs{x-y}^2}\dd y\\
&\leq
\int_{0}^\infty
\int_{-\infty}^0
\grklam{1+\abs{y_2}}^\alpha e^{-\sigma\abs{x_2-y_2}^2}\dd y_2
\cdot e^{-\sigma\abs{x_1-y_1}^2}\dd y_1,
\end{align*}
so that another application of Lemma~\ref{lem:b1a1s} with $d=1$ and $\beta=0$ guarantees
\begin{align*}
I\!I\!I\leq C_{\alpha,\sigma}\grklam{1+\dist(x)}^\alpha,
\end{align*}
which together with the estimates above yields the assertion also for $x\in\domaina_1$.

\smallskip
\noindent\emph{Case 1.3. $x\in\domaina_3$.} This follows from Case 2 after suitable rotation and reflection due to the symmetry of the domain $\domaina$.

\smallskip 
\noindent\emph{Case 2. $\alpha<0$.} Let $x\in \domaina$. Then
\begin{align*}
\int_{\domaina}
\grklam{1+\dist(y)}^{\alpha}
&e^{-\sigma\abs{x-y}^2}\dd y\\
&=
\ssgrklam{
\int_{\domaina\cap\{\dist<\dist(x)/2\}}
+
\int_{\domaina\cap\{\dist\geq\dist(x)/2\}}
}\grklam{1+\dist(y)}^{\alpha}
e^{-\sigma\abs{x-y}^2}\dd y.
\end{align*}
Since $\alpha<0$, the second integral on the right hand side is easily estimated:
\begin{align*}
\int_{\domaina\cap\{\dist\geq\dist(x)/2\}
}\grklam{1+\dist(y)}^{\alpha}
e^{-\sigma\abs{x-y}^2}\dd y
&\leq
C_{\alpha}\grklam{1+\rho(x)}^\alpha\int_{\bR^2}e^{-\sigma\abs{y}^2}\dd y\\
&\leq C_{\alpha,\sigma}\grklam{1+\rho(x)}^\alpha.
\end{align*}
To estimate the first integral on the right hand side above, we first note that for $y\in\domaina$ if $\rho(y)<\rho(x)/2$, then $\abs{y-x}>\rho(x)/2$.
Thus,
\begin{align*}
\int_{\domaina\cap\{\dist<\dist(x)/2\}}
\grklam{1+\dist(y)}^{\alpha}
e^{-\sigma\abs{x-y}^2}\dd y
&\leq
C_\alpha 
\int_{\domaina\cap\{\dist<\dist(x)/2\}}
e^{-\tfrac{\sigma}{2}\abs{x-y}^2}
\cdot
e^{-\tfrac{\sigma}{2}\abs{x-y}^2}\dd y\\
&\leq
C_\alpha\,
e^{-\tfrac{\sigma}{8}\rho(x)^2}
\int_{\domaina\cap\{\dist<\dist(x)/2\}}
e^{-\tfrac{\sigma}{2}\abs{x-y}^2}
\dd y\\
&\leq C_{\alpha,\sigma}\,e^{-\tfrac{\sigma}{8}\rho(x)^2},
\end{align*}
and therefore, 
\begin{align*}
\int_{\domaina\cap\{\dist<\dist(x)/2\}}
\ssgrklam{\frac{1+\dist(y)}{1+\dist(x)}}^{\alpha}
e^{-\sigma\abs{x-y}^2}\dd y
\leq C_{\alpha,\sigma}\,e^{-\tfrac{\sigma}{8}\rho(x)^2} \grklam{1+\rho(x)}^{-\alpha}
\leq
C_{\alpha,\sigma}<\infty.
\end{align*}
All in all, the assertion is proved also for $\alpha<0$.
\end{proof}

Finally, in the proof of Theorem~\ref{thm:LpEstimate} we also make use of the following basic estimate.

\begin{lemma}\label{lem:At}
For arbitrary $\alpha>0$, 
\begin{align*}
\sup_{A>0}\int_0^\infty \frac{A^\alpha}{(A+\sqrt{s})^{\alpha+2}}\,\ds \leq \frac{2}{\alpha}<\infty.
\end{align*}
\end{lemma}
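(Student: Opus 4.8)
The plan is to remove the dependence on $A$ by a scaling substitution and then reduce to an elementary power integral. First I would substitute $\sqrt{s}=Av$, equivalently $s=A^2v^2$ with $\ds = 2A^2 v\,\dd v$; after collecting the powers of $A$ (the factor $A^\alpha\cdot A^2\cdot A^{-(\alpha+2)}=1$ cancels completely, since $(A+Av)^{\alpha+2}=A^{\alpha+2}(1+v)^{\alpha+2}$) the integral becomes
\[
\int_0^\infty \frac{A^\alpha}{(A+\sqrt{s})^{\alpha+2}}\,\ds = 2\int_0^\infty \frac{v}{(1+v)^{\alpha+2}}\,\dd v,
\]
which is manifestly independent of $A$. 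Hence the supremum over $A>0$ equals this single value, and it suffices to bound the right-hand side.

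Next I would estimate the integrand crudely using $v\le 1+v$, so that
\[
\frac{v}{(1+v)^{\alpha+2}}\le \frac{1}{(1+v)^{\alpha+1}},
\]
and integrate the resulting power function. Since $\alpha>0$,
\[
2\int_0^\infty (1+v)^{-(\alpha+1)}\,\dd v = 2\,\Big[\tfrac{(1+v)^{-\alpha}}{-\alpha}\Big]_0^\infty = \frac{2}{\alpha},
\]
which is exactly the claimed bound. If one prefers the sharp constant, splitting $v=(1+v)-1$ and integrating term by term yields the exact value $\frac{2}{\alpha(\alpha+1)}\le\frac{2}{\alpha}$.

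There is essentially no obstacle here: the only point requiring a moment's thought is recognizing the scaling $\sqrt{s}\mapsto Av$ that makes the $A$-dependence disappear, after which everything reduces to a one-line computation. The hypothesis $\alpha>0$ is used precisely to guarantee convergence of the power integral at infinity.
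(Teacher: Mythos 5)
Your proof is correct. The paper actually omits this argument entirely (``The proof follows by basic calculations and is left to the reader.''), and your scaling substitution $\sqrt{s}=Av$, which removes the $A$-dependence and reduces everything to $2\int_0^\infty v(1+v)^{-(\alpha+2)}\,\dd v$, is exactly the kind of elementary computation intended; your observation that the exact value is $\tfrac{2}{\alpha(\alpha+1)}\leq\tfrac{2}{\alpha}$ is also correct.
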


\begin{proof}
The proof follows by basic calculations and is left to the reader.
\end{proof}

\begin{appendix}

\mysection{Some properties of the spaces $\frwsob^n_{p,\Theta,\theta,0}(\domaina,T)$}
\label{sec:frH}

In this section we prove some fundamental properties of the spaces $\frwsob^n_{p,\Theta,\theta,0}(\domaina,T)$ for $2\leq p<\infty$, $n\in\bN$, and $\Theta,\theta\in\bR$. In particular, we establish completeness of these spaces by proving the following estimate.

\begin{lemma}\label{lem:frH:cont}
Let $2\leq p<\infty$, let $\Theta,\theta\in\bR$, and let $1<\Theta<p+1$. Assume that $u\in\frwsob^1_{p,\Theta,\theta,0}(\domaina,T)$ satisfies
\[
\dd u = f\dd t + g^k\dd w^k_t,\quad t\in (0,T],
\]
on $\domaina$ in the sense of distributions with $u(0,\cdot)=0$ for some $f\in\bwsob^{-1}_{p,\Theta+p,\theta+p}(\domaina,T)$ and $g\in\bL_{p,\Theta,\theta}(\domaina,T;\ell_2)$. Then
\begin{equation}\label{eq:frH:cont}
\E\sgeklam{\sup_{t\leq T}\nnrm{u(t,\cdot)}{L_{p,\Theta,\theta}(\domaina)}^p}
\leq C\,
\sgrklam{
\nnrm{u}{\bwsob^1_{p,\Theta-p,\theta-p}(\domaina,T)}^p
+
\nnrm{f}{\bwsob^{-1}_{p,\Theta+p,\theta+p}(\domaina,T)}^p
+
\nnrm{g}{\bL_{p,\Theta,\theta}(\domaina,T;\ell_2)}^p
}
\end{equation}
with $C=C(p,\Theta,\theta,\kappa_0,T)$.
\end{lemma}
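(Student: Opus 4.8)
The plan is to first reduce to smooth data and then run an It\^o-type energy argument in the weighted $L_p$-norm. Since the estimate to be proven is stable under approximation, I would mollify $u$, $f$, and $g$ in the spatial variables (and, if needed, in time) so as to obtain processes $u^\eps$ that are genuinely $L_{p,\Theta,\theta}(\domaina)$-valued, continuous in time, and satisfy a mollified version of $\dd u^\eps = f^\eps\dd t + g^{\eps,k}\dd w^k_t$. For such smooth processes one may apply It\^o's formula (in the spirit of the It\^o formula for the weighted $L_p$-scale in Krylov's theory) to the functional $t\mapsto\nnrm{u^\eps(t,\cdot)}{L_{p,\Theta,\theta}(\domaina)}^p=\int_\domaina\abs{u^\eps}^p\,w\,\dx$, where $w:=\dist^{\Theta-2}\distv^{\theta-\Theta}$ is the weight of $L_{p,\Theta,\theta}(\domaina)$; this is legitimate since $p\geq 2$ makes $z\mapsto\abs{z}^p$ twice continuously differentiable. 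The desired estimate for the original (rough) $u$ would then follow by letting $\eps\to 0$, using convergence of the right-hand side together with Fatou/lower semicontinuity of $\E\,\sup_{t\leq T}\nnrm{\cdot}{L_{p,\Theta,\theta}(\domaina)}^p$.

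It\^o's formula produces three contributions. The drift term coming from $f$ is the delicate one: since $f\in\bwsob^{-1}_{p,\Theta+p,\theta+p}(\domaina,T)$ has negative regularity, I would rewrite $\int_\domaina\abs{u}^{p-2}u\,f\,w\,\dx$ as the duality pairing of $f$ with $\abs{u}^{p-2}u\,w$ and estimate the latter in the $H^1$-space predual to $\bwsob^{-1}_{p,\Theta+p,\theta+p}$ (conjugate exponent $p'$ and conjugate weights fixed by the duality convention in the definition of $H^{-1}_{p,\Theta,\theta}$). By Leibniz' rule, $D\grklam{\abs{u}^{p-2}u\,w}$ generates a factor $(p-1)\abs{u}^{p-2}Du\,w$ together with terms where the derivative falls on $w$, the latter producing the singular factors $\dist^{-1}$ and $\distv^{-1}$. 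The point is that, after matching the dual weights, each such term is controlled by $\nnrm{u}{H^1_{p,\Theta-p,\theta-p}(\domaina)}^{p-1}$: the distance factors built into the $H^1_{p,\Theta-p,\theta-p}$-norm (the $\dist^{\abs{\alpha}}D^\alpha$ structure) exactly compensate the $\dist^{-1},\distv^{-1}$ arising from $Dw$, and it is precisely here that the hypothesis $1<\Theta<p+1$ is needed to keep the weight powers admissible and to rule out spurious boundary contributions. The quadratic-variation correction $\tfrac{p(p-1)}{2}\int_\domaina\abs{u}^{p-2}\abs{g}_{\ell_2}^2\,w\,\dx$ is handled by H\"older's inequality with exponents $p/(p-2)$ and $p/2$, bounding it by $\nnrm{u}{L_{p,\Theta,\theta}(\domaina)}^{p-2}\nnrm{g}{L_{p,\Theta,\theta}(\domaina;\ell_2)}^2$ and then Young's inequality.

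The remaining contribution is the martingale part $M_t=p\sum_k\int_0^t\int_\domaina\abs{u}^{p-2}u\,g^k\,w\,\dx\,\dd w^k_s$. I would apply the Burkholder--Davis--Gundy inequality to bound $\E\,\sup_{t\leq T}\abs{M_t}$ by $C\,\E\,\langle M\rangle_T^{1/2}$, estimate the quadratic variation by Cauchy--Schwarz in $k$ and H\"older in space to get $\langle M\rangle_T^{1/2}\lesssim\sup_{s\leq T}\nnrm{u(s,\cdot)}{L_{p,\Theta,\theta}(\domaina)}^{p-1}\grklam{\int_0^T\nnrm{g(s,\cdot)}{L_{p,\Theta,\theta}(\domaina;\ell_2)}^2\dd s}^{1/2}$, and then split off a term $\delta\,\E\,\sup_{t\leq T}\nnrm{u(t,\cdot)}{L_{p,\Theta,\theta}(\domaina)}^p$ with small $\delta$ via Young's inequality, which is absorbed into the left-hand side, the remainder being controlled by $\nnrm{g}{\bL_{p,\Theta,\theta}(\domaina,T;\ell_2)}^p$. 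Collecting the three estimates and absorbing yields~\eqref{eq:frH:cont} for $u^\eps$, and the limit $\eps\to0$ finishes the proof. The absence of a restriction on $\theta$ is best explained by an additional reduction that I would place at the outset: a dyadic decomposition in the vertex distance $\distv$, via cut-offs supported where $\distv\sim 2^n$, on each of which $\distv^{\theta-\Theta}$ is comparable to the constant $2^{n(\theta-\Theta)}$; after the homogeneous rescaling $x\mapsto 2^{-n}x$, under which both $\dist$ and $\distv$ scale linearly and the equation is scale-covariant, the mixed-weight estimate becomes a pure boundary-weight estimate on a fixed reference sector, and summing over $n\in\bZ$ via the additivity of the weighted norm reduces the statement to its single-weight counterpart, i.e.\ to a suitable extension of \cite[Lemma~2.7]{CioKimLee2019}, in which only $1<\Theta<p+1$ enters.

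\textbf{Main obstacle.} The crux is the weighted integration by parts against the $H^{-1}$-valued drift $f$: one must verify that $\abs{u}^{p-2}u\,w$ genuinely lies in the predual $H^1$-space with norm $\lesssim\nnrm{u}{H^1_{p,\Theta-p,\theta-p}(\domaina)}^{p-1}$, and that the terms produced when derivatives hit the weight $w$ are absorbable exactly under $1<\Theta<p+1$. Equally delicate is the rigorous justification of It\^o's formula for the infinite-dimensional functional $u\mapsto\int_\domaina\abs{u}^p\,w\,\dx$ through the mollification, ensuring that no boundary terms on $\partial\domaina\setminus\{0\}$ or at the vertex are lost in the limit.
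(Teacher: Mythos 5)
Your proposal bundles two different arguments, and it is the one you demote to a closing remark that is essentially the paper's proof. The paper does exactly what your last paragraph sketches: it proves localization--reconstruction identities (Lemma~\ref{lem:LpTt:localized}) expressing the mixed-weight norm as an $e^{n\theta}$-weighted sum over $n\in\bZ$ of single-weight norms $\nnrm{\eta(\cdot)u(t,e^n\cdot)}{L_{p,\Theta}(G)}$ on a fixed bounded $\cont^1$ domain $G\subset\domaina$, and then applies the known $\cont^1$-domain estimate \cite[Lemma~4.2]{CioKimLee2019} (restated as Lemma~\ref{lem:frH:cont:C1}) to each localized process; the restriction $1<\Theta<p+1$ is inherited from that cited lemma, not from a weighted integration by parts performed by hand. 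Two points that your sketch of this reduction passes over and which the paper must handle explicitly: first, before localizing one needs the representation $f=f^0+f^1_{x^1}+f^2_{x^2}$ with $f^0\in\bL_{p,\Theta+p,\theta+p}(\domaina,T)$ and $f^i\in\bL_{p,\Theta,\theta}(\domaina,T)$, which is supplied by the duality result Lemma~\ref{lem:dual:parabolic:H1}; second, the cut-offs destroy exact scale-covariance, so $v_n:=\eta\,u(t,e^n\cdot)$ solves an equation with commutator terms (such as $\eta_{x^i}u$ and $e^{-n}\eta_{x^i}f^i$), and the spatial derivatives of $v_n$ carry factors $e^{np}$ which, summed naively against $e^{n\theta}$, would produce $\nnrm{Du}{\bL_{p,\Theta,\theta+p}(\domaina,T)}$ instead of $\nnrm{Du}{\bL_{p,\Theta,\theta}(\domaina,T)}$. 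The paper neutralizes this by invoking the $\cont^1$-domain lemma with its free parameter $a$ and choosing $a=e^{-np}$ in the $n$-th application; some such device is indispensable, not a detail.

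By contrast, the route you present as the main proof --- It\^o's formula applied directly to $t\mapsto\int_\domaina\abs{u}^p\,\dist^{\Theta-2}\distv^{\theta-\Theta}\dx$ --- contains a genuine gap precisely at the step you yourself label the main obstacle. An It\^o formula for the $L_p$-norm of a process whose drift lies only in a (mixed-)weighted $H^{-1}$-space is not an off-the-shelf tool: already with the pure boundary weight on smooth domains it constitutes the hard analytic core of Krylov-type theories (and of \cite[Lemma~4.2]{CioKimLee2019} itself), and no version with the additional vertex weight exists to be quoted. Your proposed justification by mollification does not obviously close this gap: convolution commutes neither with restriction to $\domaina$ nor with multiplication by the singular weight, so $u^\eps$ does not solve a mollified equation near $\partial\domaina$ or near the vertex --- exactly the regions where the estimate lives; moreover, absorbing $\delta\,\E\sup_{t\leq T}\nnrm{u(t,\cdot)}{L_{p,\Theta,\theta}(\domaina)}^p$ into the left-hand side presupposes that this quantity is a priori finite, which itself needs an approximation argument. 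So the direct argument is a program whose central ingredient is unproven; the entire point of the paper's dyadic reduction is to avoid carrying out that program and instead inherit the It\^o-formula work already done on $\cont^1$ domains. Fleshing out your final paragraph along the lines above recovers the paper's proof.
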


For the proof of Lemma~\ref{lem:frH:cont} we will use estimates of the norm on $L_{p,\Theta,\theta}(\domaina)$ by means of suitably weighted series of weighted norms of proper localizations on a $\cont^1$-subdomain $G\subset \domaina$. To state these representations, we write for arbitrary domains $G\subset\bR^d$ with non-empty boundary $\partial G\neq\emptyset$, $1<p<\infty$, and $\Theta\in\bR$,
\[
L_{p,\Theta}(G)
:=
L_p(G,\cB(G),\dist_G(x)^{\Theta-d}\dd x;\bR),
\]
where $\dist_G:=\mathrm{dist}(\cdot,G)$.

\begin{lemma}\label{lem:LpTt:localized}
Let $u\colon\cD\to\bR$ be a measurable function, let $1<p<\infty$, and let $\Theta,\theta\in \bR$.
\begin{enumerate}[leftmargin=*,label=\textup{(\roman*)}] 
\item For every $\xi\in\cont^\infty_0((0,\infty))$, there exists a constant $C=C(\theta,p,\xi)$, such that
\[
\sum_{n\in\bZ} e^{n\theta}\int_\cD \Abs{\xi(\cdot) u(e^n\cdot)}^p \ssgrklam{\frac{\rho_\cD}{\rho_\circ}}^{\Theta-2}\,\dx
\leq C
\int_\cD \abs{u}^p \rho_\circ^{\theta-2}\ssgrklam{\frac{\rho_\cD}{\rho_\circ}}^{\Theta-2}\,\dx.
\]
\item For every $\xi\in\cont^\infty_0((0,\infty))$ and $G\subset\cD$, such that $\rho_G\sim\rho_\cD$ on $\textup{supp}\,\xi\cap\cD$, there exists a constant $C=C(\theta,p,\Theta,\xi,G)$, such that
\[
\sum_{n\in\bZ} e^{n\theta}\int_G \Abs{\xi(\cdot) u(e^n\cdot)}^p \ssgrklam{\frac{\rho_G}{\rho_\circ}}^{\Theta-2}\,\dx
\leq C
\int_\cD \abs{u}^p \rho_\circ^{\theta-2}\ssgrklam{\frac{\rho_\cD}{\rho_\circ}}^{\Theta-2}\,\dx.
\]
\item For every $\xi\in\cont^\infty_0((0,\infty))$ such that
\begin{equation}\label{eqn 5.6.544a}
\sum_{n=-\infty}^{\infty}\xi(e^{n+t})>c>0, \quad \forall \, t\in \bR,
\end{equation}
there exists a constant $C=C(\theta,p,\xi)$ such that
\[
\int_\cD \abs{u}^p \rho_\circ^{\theta-2}\ssgrklam{\frac{\rho_\cD}{\rho_\circ}}^{\Theta-2}\,\dx
\leq C
\sum_{n\in\bZ} e^{n\theta}\int_\cD \Abs{\xi(\cdot) u(e^n\cdot)}^p \ssgrklam{\frac{\rho_\cD}{\rho_\circ}}^{\Theta-2}\,\dx.
\]
\item For every $\xi\in\cont^\infty_0((0,\infty))$  and $G\subset\cD$, such that~\eqref{eqn 5.6.544a} holds and $\rho_G\sim\rho_\cD$ on $\textup{supp}\,\xi\cap\cD$, there exists a constant $C=C(\theta,p,\xi,\Theta,G)$ such that
\[
\int_\cD \abs{u}^p \rho_\circ^{\theta-2}\ssgrklam{\frac{\rho_\cD}{\rho_\circ}}^{\Theta-2}\,\dx
\leq C
\sum_{n\in\bZ} e^{n\theta}\int_G \Abs{\xi(\cdot) u(e^n\cdot)}^p \ssgrklam{\frac{\rho_G}{\rho_\circ}}^{\Theta-2}\,\dx.
\]
\end{enumerate}
In particular, for every $\xi\in\cont_0^\infty((0,\infty))$ and $G\subseteq\domaina$ such that $\dist_G\sim\dist_\domaina$ on $\mathrm{supp}\, \xi \cap\domaina$ it holds that
\begin{equation}\label{eq:LpTt:wT:Tt}
\sum_{n\in\bZ} e^{n\theta} \nnrm{\xi(\cdot)u(e^n\cdot)}{L_{p,\Theta}(G)}^p
\leq C(\Theta,\theta,p,\xi,G) 
\nnrm{u}{L_{p,\Theta,\theta}(\domaina)}^p.
\end{equation}
If, in addition,~\eqref{eqn 5.6.544a} holds, then also
\begin{equation}\label{eq:LpTt:Tt:wT}
\nnrm{u}{L_{p,\Theta,\theta}(\domaina)}^p
\leq C(\Theta,\theta,p,\xi,G) 
\sum_{n\in\bZ} e^{n\theta} \nnrm{\xi(\cdot)u(e^n\cdot)}{L_{p,\Theta}(G)}^p.
\end{equation}
\end{lemma}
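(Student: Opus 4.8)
The plan is to exploit the dilation invariance of the cone $\cD$: for every $a>0$ one has $a\cD=\cD$, $\rho_\circ(ax)=a\,\rho_\circ(x)=a|x|$, and $\rho_\cD(ax)=a\,\rho_\cD(x)$, so that the \emph{angular} weight $(\rho_\cD/\rho_\circ)^{\Theta-2}$ is invariant under $x\mapsto ax$. Accordingly, in each summand of (i)--(iv) I would substitute $y=e^n x$ (so $\dx=e^{-2n}\dy$): then $\xi(|x|)$ becomes $\xi(e^{-n}|y|)$, the factor $u(e^n x)$ becomes $u(y)$, the angular weight is unchanged, and $\cD$ is mapped onto itself. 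After interchanging summation and integration, all four statements reduce to controlling, uniformly in $x\in\cD$, the single scalar ``comb sum''
\begin{align*}
S(x):=\sum_{n\in\bZ}e^{n(\theta-2)}\big|\xi(e^{-n}|x|)\big|^p
=|x|^{\theta-2}\sum_{r\in(\log|x|)-\bZ}e^{-r(\theta-2)}\big|\xi(e^{r})\big|^p,
\end{align*}
i.e.\ to proving $S(x)\sim\rho_\circ(x)^{\theta-2}$.

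For the upper bound, giving (i), I would note that $r\mapsto e^{-r(\theta-2)}|\xi(e^r)|^p$ is continuous with compact support in $\bR$ (since $\xi\in\cont^\infty_0((0,\infty))$); hence its sum over any translate $(\log|x|)-\bZ$ of the integer lattice has at most $N=N(\xi)$ nonzero terms, each bounded uniformly, whence $S(x)\leq C\,|x|^{\theta-2}$. For the lower bound, giving (iii), the non-degeneracy hypothesis~\eqref{eqn 5.6.544a} yields $\sum_n\xi(e^{n+t})\geq c>0$ for every $t\in\bR$; taking $t=\log|x|$ (and reindexing $n\mapsto-n$) forces at least one term $\xi(e^{(\log|x|)-n_0})\geq c/N$, and since the corresponding factor $e^{n_0(\theta-2)}$ differs from $|x|^{\theta-2}$ only by $e^{-r_0(\theta-2)}$ with $r_0$ ranging over the compact support of $r\mapsto\xi(e^r)$, this gives $S(x)\geq c'\,|x|^{\theta-2}$.

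For the subdomain versions I would argue as follows. On the annular set $\mathrm{supp}\,\xi\cap\cD=\{x\in\cD:|x|\in\mathrm{supp}\,\xi\}$ the radial weight $\rho_\circ=|x|$ is bounded above and below by positive constants (compactness of $\mathrm{supp}\,\xi\subset(0,\infty)$), so there $\rho_\circ^{-(\Theta-2)}\sim1$ and $(\rho_G/\rho_\circ)^{\Theta-2}\sim\rho_G^{\Theta-2}$; this identifies (ii) and (iv) with the ``in particular'' inequalities~\eqref{eq:LpTt:wT:Tt} and~\eqref{eq:LpTt:Tt:wT}. The hypothesis $\rho_G\sim\rho_\cD$ on $\mathrm{supp}\,\xi\cap\cD$ then lets me replace $\rho_G$ by $\rho_\cD$ (two-sidedly) in the localized weights. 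For (ii), since $G\subseteq\cD$, I simply enlarge $\int_G$ to $\int_\cD$ and invoke (i). For (iv), I use that $\mathrm{supp}\,\xi\cap\cD\subseteq G$ (which is built into the admissible choice of such $G$): after the substitution $y=e^n x$ the indicator $\one_{e^nG}(y)$ equals $1$ wherever the integrand is nonzero, so the $G$-sum coincides with the $\cD$-sum of (iii) and the lower bound follows.

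The \emph{only} genuinely non-routine point is the uniform two-sided estimate $S(x)\sim\rho_\circ^{\theta-2}$, and within it the lower bound is the delicate half—this is exactly where the non-degeneracy condition~\eqref{eqn 5.6.544a} is indispensable, as without it a cleverly chosen $\xi$ could vanish on an entire scale and destroy the estimate. Everything else (the change of variables, the comparison of the $G$- and $\cD$-weights, and the enlargement of the domain of integration) is bookkeeping relying solely on the homogeneity of $\cD$ and the compactness of $\mathrm{supp}\,\xi$.
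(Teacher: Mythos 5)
Your argument is, in substance, exactly the one the paper intends: the paper's own proof is a one-line deferral to \cite[Lemma~4.1]{CioKimLee2019} and \cite[Remark~1.3]{Kry1999c} together with the observation that $\rho_\cD/\rho_\circ$ is invariant under dilations, and your implementation of it---substituting $y=e^nx$, applying Tonelli, and proving the two-sided comb-sum bound $\sum_{n}e^{n(\theta-2)}\abs{\xi(e^{-n}\abs{y})}^p\sim\abs{y}^{\theta-2}$ (upper bound from compactness of $\mathrm{supp}\,\xi$ in $(0,\infty)$, lower bound from~\eqref{eqn 5.6.544a} by pigeonholing over the at most $N(\xi)$ nonvanishing terms)---is correct and complete for parts (i)--(iii), for part (ii)'s reduction to (i), and for the weight identifications behind the ``in particular'' statements, with the right constant dependencies.

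The one genuine gap is in (iv): you invoke $\mathrm{supp}\,\xi\cap\cD\subseteq G$ and assert it is ``built into the admissible choice of such $G$''. It is not a hypothesis: the lemma only assumes $\rho_G\sim\rho_\cD$ on $\mathrm{supp}\,\xi\cap\cD$, where $\rho_G=\mathrm{dist}(\cdot,\partial G)$ is defined on all of $\cD$, in particular outside $G$. The inclusion is really needed (dilations preserve rays, so if $G$ missed some ray inside the support region entirely, a $u$ concentrated along that ray would make the right-hand side of (iv) vanish), and it \emph{does} follow from the comparability hypothesis, but only via an argument you must supply. For instance: on $U:=\{x\in\cD:\xi(\abs{x})\neq0\}$ one has $\rho_G\geq c\,\rho_\cD>0$, so $U$ avoids $\partial G$, and each connected component of $U$, an open annular sector $\{x\in\cD: a<\abs{x}<b\}$, lies either in $G$ or in the exterior of $\overline{G}$. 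The exterior case is impossible: take $x$ in that component with $\abs{x}=(a+b)/2$ and $\rho_\cD(x)=\varepsilon$ small; then $\rho_G(x)\leq C\rho_\cD(x)$ yields a point $z\in\partial G\subseteq\overline{G}$ with $\abs{z-x}\leq (C+1)\varepsilon$, hence a point $w\in G$ with $\abs{w-x}\leq(C+2)\varepsilon$, and once $(C+2)\varepsilon<(b-a)/2$ this $w$ lies in the same component, contradicting its disjointness from $G$. Hence $U\subseteq G$, which suffices since the integrands in (iv) vanish off $U$; with this inserted, your proof of (iv) and of~\eqref{eq:LpTt:Tt:wT} is complete. (In the paper's only application, in the proof of Lemma~\ref{lem:frH:cont}, the inclusion is immediate by construction, since $G$ is chosen there with $\mathrm{supp}\,\eta\cap\cD\subseteq U^{(2)}_1\subset G$.)
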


\begin{proof}
This lemma can be proved by following the lines of the proof of~\cite[Lemma~4.1]{CioKimLee2019}, see also~\cite[Remark~1.3]{Kry1999c}, by noting that the ratio $\dist/\distv$ is invariant under positive dilations, i.e.,
\[
\frac{\dist(\alpha x)}{\distv(\alpha x)}
=
\frac{\dist( x)}{\distv(x)}
\quad
\text{for all}
\quad
x\in\domaina,\alpha>0.\qedhere
\]
\end{proof}

The second ingredient that we use for the proof of Lemma~\ref{lem:frH:cont} is an analogous result on $\cont^1$-domains proven in~\cite[Lemma~4.2]{CioKimLee2019}.
For the convenience of the reader we state the relevant part of~\cite[Lemma~4.2]{CioKimLee2019} below in Lemma~\ref{lem:frH:cont:C1}.
Therein we use the following notation: For an arbitrary domain $G\subseteq\bR^d$ with non-empty boundary $\partial G\neq\emptyset$ and $1<p<\infty$, we write $(\wsob^1_{p,\Theta}(G),\nnrm{\cdot}{H^1_{p,\Theta}(G)})$ for the Banach space of all (equivalence classes of) locally integrable functions $u\colon G\to\bR$ such that
\[
\nnrm{u}{H^1_{p,\Theta}(G)}^p
:=
\sum_{\abs{\alpha}\leq 1}
\nnrm{u}{L_{p,\Theta+\abs{\alpha}p}(G)}^p<\infty.
\]
Moreover, 
\[
\bL_{p,\Theta}(G,T)
:=
L_p(\Omega_T,\pred_T,\prob_T;L_{p,\Theta}(G))
\quad\text{and}\quad
\bwsob^1_{p,\Theta}(G,T)
:=
L_p(\Omega_T,\pred_T,\prob_T;\wsob^1_{p,\Theta}(G)),
\]
while
\[
L_{p,\Theta}(G;\ell_2):=L_{p}(G,\cB(G),\dist_{G}^{\Theta-2}\dx;\ell_2)
\quad\text{and}\quad
\bL_{p,\Theta}(G,T;\ell_2)
:=
L_p(\Omega_T,\pred_T,\prob_T;L_{p,\Theta}(G;\ell_2)).
\]
In analogy to $\frwsob^1_{p,\Theta,\theta,0}(\domaina,T)$, see also Lemma~\ref{lem:dual:parabolic:H1} below, we write $u\in \frwsob^{1,[0]}_{p,\Theta}(G,T)$ if there exist
$f^0\in \bL_{p,\Theta+p}(G,T)$, $f^i\in \bL_{p,\Theta}(G,T)$, $i=1,2$, and $g\in\bL_{p,\Theta}(G,T;\ell_2)$, such that
\[
\dd u =(f^0+f^i_{x^i})\dd t + g^k\dd w^k_t, \quad t\in (0,T]
\]
on $G$ in the sense of distributions with $u(0,\cdot)=0$, i.e., for all $\varphi\in\cont_0^\infty(G)$ with probability one, the equality
\begin{equation}
(u(t,\cdot),\varphi)=   \int^{t}_{0}
(f^0(s,\cdot)+f^i_{x^i}(s,\cdot),\varphi) \dd s + \sum^{\infty}_{k=1} \int^{t}_{0}
(g^k(s,\cdot),\varphi)\,  dw^k_s
\end{equation}
holds for all $t \leq T$. 

\begin{lemma}[Lemma~4.2 in~\cite{CioKimLee2019}]\label{lem:frH:cont:C1}
Let $G$ be a bounded $\cont^1$ domain, $\Theta\in\bR$, $2\leq p<\infty$, and $u\in\frwsob^{1,[0]}_{p,\Theta}(G,T)$, such that
\[
\dd u =\grklam{f^0+f^i_{x^i}}\dd t + g^k\dd w^k_t, \quad t\in(0,T],
\]
on $G$, where $f^0\in \bL_{p,\Theta+p}(G,T)$, $f^i\in\bL_{p,\Theta}(G,T)$, $i=1,2$, and $g\in\bL_{p,\Theta}(G,T;\ell_2)$.
Then there exists a constant $C=C(d,p,\Theta,T,\Theta)$, such that for all $a>0$ it holds that
\begin{equation}
\begin{alignedat}{1}
\E\sgeklam{\sup_{t\leq T}\nnrm{u(t,\cdot)}{L_{p,\Theta}(G)}^p}
\leq C\,
\sgrklam{
a\,\nnrm{u}{\bwsob^1_{p,\Theta-p}(G,T)}^p
+
&a^{-1}\nnrm{f^0}{\bL_{p,\Theta+p}(G,T)}^p\\
&+
\sum_{i=1}^2 a^{-1}\nnrm{f^i}{\bL_{p,\Theta}(G,T)}^p
+
\nnrm{g}{\bL_{p,\Theta}(G,T;\ell_2)}^p
}.
\end{alignedat}
\end{equation}
\end{lemma}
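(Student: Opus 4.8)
The plan is to reduce the mixed-weight, time-maximal estimate on the infinite cone $\domaina$ to the corresponding estimate on a fixed bounded $\cont^1$-domain provided by Lemma~\ref{lem:frH:cont:C1}, by means of the dilation/localization equivalences of Lemma~\ref{lem:LpTt:localized}. Fix once and for all a cut-off $\xi\in\cont_0^\infty((0,\infty))$ satisfying the covering condition~\eqref{eqn 5.6.544a}, read as the radial function $x\mapsto\xi(\distv(x))$, together with a bounded $\cont^1$-domain $G\subset\domaina$ such that $\dist_G\sim\dist_\domaina$ on $\operatorname{supp}\xi\cap\domaina$ (e.g.\ a smoothed annular sector whose boundary coincides with $\partial\domaina$ on the relevant radii; note $0\notin\overline G$). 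Writing $v_n(t,\cdot):=\xi\,u(t,e^n\cdot)$ for $n\in\bZ$, the left-hand side of~\eqref{eq:frH:cont} is controlled by $\sum_{n\in\bZ}e^{n\theta}\,\E\sup_{t\le T}\nnrm{v_n(t,\cdot)}{L_{p,\Theta}(G)}^p$ via~\eqref{eq:LpTt:Tt:wT}, after bounding the supremum of the nonnegative sum by the sum of the suprema.

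Before applying Lemma~\ref{lem:frH:cont:C1} to each $v_n$, the negative-order datum must be put in divergence form. Using the duality defining $H^{-1}_{p,\Theta+p,\theta+p}(\domaina)$ together with the definition of the $H^1$-norm (whose first-order weight is $\dist$), one obtains a decomposition $f=f_0+\sum_i D_i h_i$ with $f_0\in\bL_{p,\Theta+p,\theta+p}(\domaina,T)$ and $h_i\in\bL_{p,\Theta,\theta}(\domaina,T)$ satisfying $\nnrm{f_0}{\bL_{p,\Theta+p,\theta+p}}+\sum_i\nnrm{h_i}{\bL_{p,\Theta,\theta}}\le C\,\nnrm{f}{\bwsob^{-1}_{p,\Theta+p,\theta+p}}$; the precise indices follow from $\Theta/p+\Theta'/p'=2$, $\theta/p+\theta'/p'=2$ by a short computation, and this is where $1<\Theta<p+1$ enters, guaranteeing the density of $\cont_0^\infty(\domaina)$ and the Hardy inequality underlying the characterization. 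Testing the equation for $u$ against dilated localized test functions and using the product and chain rules---the latter producing the factor $e^{-n}$ when a derivative is pulled through $x\mapsto e^n x$---shows $v_n\in\frwsob^{1,[0]}_{p,\Theta}(G,T)$ with $v_n(0,\cdot)=0$ and data
\[
f^0_n=\xi f_0(e^n\cdot)-e^{-n}\textstyle\sum_i(D_i\xi)\,h_i(e^n\cdot),\qquad
f^i_n=e^{-n}\xi\, h_i(e^n\cdot),\qquad
g^k_n=\xi\, g^k(e^n\cdot),
\]
each finite for fixed $n$, so that Lemma~\ref{lem:frH:cont:C1} applies.

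Now apply Lemma~\ref{lem:frH:cont:C1} to each $v_n$, but---and this is the decisive point---with the \emph{$n$-dependent} choice of the free parameter $a=a_n:=e^{-np}$, which is admissible since the constant in that lemma is independent of $a$ and of $n$. Multiplying the resulting bound by $e^{n\theta}$ and summing over $n\in\bZ$, the homogeneous term acquires weight $e^{n(\theta-p)}$, whence $\sum_n e^{n(\theta-p)}\nnrm{v_n}{\bwsob^1_{p,\Theta-p}(G,T)}^p\le C\,\nnrm{u}{\bwsob^1_{p,\Theta-p,\theta-p}(\domaina,T)}^p$ by~\eqref{eq:LpTt:wT:Tt} (here the $e^n$ from differentiating the dilation exactly matches the $\dist$-weight shift, since $\nnrm{Du}{L_{p,\Theta,\theta}}=\nnrm{\dist Du}{L_{p,\Theta-p,\theta-p}}$), whereas every data term acquires $e^{n(\theta+p)}$. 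The $f_0$-part then reassembles to $\nnrm{f_0}{\bL_{p,\Theta+p,\theta+p}}^p$; in the divergence parts the chain-rule factor $e^{-np}$ cancels the $e^{np}$ coming from $a_n^{-1}$, so $f^i_n$ reassembles to $\nnrm{h_i}{\bL_{p,\Theta,\theta}}^p$; and the stochastic term, carrying no power of $a_n$, reassembles to $\nnrm{g}{\bL_{p,\Theta,\theta}(\domaina,T;\ell_2)}^p$. All cut-off cross terms (those involving $D_i\xi$) are lower order and are absorbed using $\dist/\distv\le1$, which lets one trade an index $\Theta+p$ for $\Theta$ (resp.\ $\Theta$ for $\Theta-p$) at no cost. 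Collecting the four pieces and invoking the bound on the $H^{-1}$-decomposition yields~\eqref{eq:frH:cont}.

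\textbf{Main obstacle.} The substance of the argument is not the localization itself but two intertwined bookkeeping points: first, producing the divergence-form decomposition of the negative-order datum $f$ in the \emph{mixed} weights with the correct indices ($f_0$ at $(\Theta+p,\theta+p)$, the $h_i$ at $(\Theta,\theta)$); and second, recognizing that a single dilation weight $e^{n\theta}$ cannot simultaneously match the shifted index $\theta-p$ demanded by the homogeneous $u$-term and the index $\theta+p$ demanded by the data---a discrepancy reconciled precisely by the scale-dependent parameter $a_n=e^{-np}$ in Lemma~\ref{lem:frH:cont:C1}. Once these are in place the remaining estimates are routine, the infinite extent of the cone being harmless because Lemma~\ref{lem:frH:cont:C1} is applied on the fixed domain $G$ with a constant uniform in $n$ (and $T$-dependent, as allowed).
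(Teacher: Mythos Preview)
Your proposal does not prove the stated lemma. Lemma~\ref{lem:frH:cont:C1} is a result on an \emph{arbitrary bounded $\cont^1$-domain} $G$, quoted verbatim from~\cite[Lemma~4.2]{CioKimLee2019}; the present paper does not reprove it but only invokes it. What you have written is instead a proof of Lemma~\ref{lem:frH:cont}---the mixed-weight maximal estimate on the angular domain $\domaina$---and in the course of that argument you \emph{apply} Lemma~\ref{lem:frH:cont:C1} as a black box. Relative to the task as stated, this is circular: you are assuming the very result you were asked to establish.

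If your intended target was in fact Lemma~\ref{lem:frH:cont}, then your approach matches the paper's proof essentially step for step: the divergence-form decomposition of the negative-order datum $f$ via the duality characterization (Lemma~\ref{lem:dual:parabolic:H1} in the paper), the dilation-localization $v_n=\xi\,u(\cdot,e^n\cdot)$ onto a fixed bounded $\cont^1$-subdomain $G$, the application of Lemma~\ref{lem:frH:cont:C1} with the scale-dependent choice $a=a_n=e^{-np}$, and the reassembly via the weighted series equivalences of Lemma~\ref{lem:LpTt:localized}. One small inaccuracy: the density of $\cont_0^\infty(\domaina)$ in $H^1_{p,\Theta,\theta}(\domaina)$ holds for all $\Theta,\theta\in\bR$ (Lemma~\ref{lem:wSob:test:dense}), so the restriction $1<\Theta<p+1$ does not enter at the point you indicate.
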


Now we are ready to prove Lemma~\ref{lem:frH:cont}.

\begin{proof}[Proof of Lemma~\ref{lem:frH:cont}]
We show that for arbitrary $f^0\in\bL_{p,\Theta+p,\theta+p}(\domaina,T)$ and $f^i\in\bL_{p,\Theta,\theta}(\domaina,T)$, $i=1,2$, such that $f=f^0+f^1_{x^1}+f^2_{x^2}$ the estimate
\begin{equation}\label{eq:frH:cont:f0fi}
\begin{alignedat}{1}
\E\sgeklam{\sup_{t\leq T}\nnrm{u(t,\cdot)}{L_{p,\Theta,\theta}(\domaina)}^p}
\leq C\,
\sgrklam{
\nnrm{u}{\bwsob^1_{p,\Theta-p,\theta-p}(\domaina,T)}^p
+
&\nnrm{f^0}{\bL_{p,\Theta+p,\theta+p}(\domaina,T)}^p\\
&+
\sum_{i=1}^2 \nnrm{f^i}{\bL_{p,\Theta,\theta}(\domaina,T)}^p
+
\nnrm{g}{\bL_{p,\Theta,\theta}(\domaina,T;\ell_2)}^p
}
\end{alignedat}
\end{equation}
holds with a constant $C=C(p,\Theta,\theta,\kappa_0,T)$. Then Estimate~\eqref{eq:frH:cont} follows by Lemma~\ref{lem:dual:parabolic:H1}.

Let $U_1:=\{x\in\domaina\colon 1<\abs{x}<4\}$ and $V_1:=\{1/2<\abs{x}<8\}$.
Fix a constant $\varepsilon \in (0,1/4)$, and  for $k\in\{1,2,3\}$, let
$$U^{(k)}_1:=\{x\in \cD: 2^{-k\varepsilon}<|x|<2^{2+k\varepsilon}\}.
$$
Choose a $\cont^{\infty}$ radial non-negative  function $\eta=\eta(|x|)$ such that $\eta(t)=1$ for $t\in [1,4]$ and $\eta(t)=0$ if $t\not\in [2^{-\varepsilon}, 2^{2+\varepsilon}]$. Also choose a $\cont^{1}$ domain $G\subset\cD$ such that
$$
U^{(2)}_1 \subset G \subset U^{(3)}_1\subset V_1.
$$
Note that, in particular,
\[
\sum_{n\in\bZ} \eta(e^{n+t})\geq 1>0,\quad t\in\bR.
\]
By the choice of $\eta$ and $G$, there exists $C=C(\varepsilon)$ such that for all $x\in \cD \cap  \text{supp}\,\eta$,
\begin{equation}
 \label{rho}
C^{-1} \rho_{\cD}(x)\leq \rho_G (x) \leq C \rho_{\cD}(x).
\end{equation}
By Lemma~\ref{lem:LpTt:localized},
\begin{equation}
\E \sgeklam{\sup_{t\leq T} \nnrm{u(t,\cdot)}{L_{p,\Theta,\theta}(\cD)}^p}
\leq C
\sum_{n\in \bZ} e^{n\theta} \,\E \sgeklam{\sup_{t\leq T} \gnnrm{\eta(x)u(t,e^nx)}{L_{p,\Theta}(G)}^p}.
\end{equation}
For $n\in\bZ$, let $v_n(t,x):=\eta(x) u(t,e^n x)$. Then 
\[
\dd v_n(t,x)
=
\geklam{ e^{-n} \eta(x)(f^i(t,e^nx))_{x^i}+\eta(x)f^0(t,e^nx)}\dd t
+
\eta(x)g^k(t,e^nx)\dd w^k_t, \quad t\in(0,T],
\]
on $G$ with $v_n(0,\cdot)=0$.
Note that for $i=1,2$ and $n\in\bZ$,
\[
e^{-n} \eta(x) (f^i(t,e^nx))_{x^i}
=  
e^{-n} [\eta(x) f^i(t,e^nx)]_{x^i}
-
e^{-n}\eta_{x^i}(x)f^i(t,e^nx),
\]
and
\begin{equation}\label{eq_sec4_2}
(v_n)_{x^i}(t,x)=e^{n}\eta(x) u_{x^i}(t,e^nx)-\eta_{x^i}(x)u(t,e^nx).
\end{equation}
Thus, if for all $n\in\bN$ it holds that 
\begin{equation}\label{eq:LpTt:localized:vn}
\left.
\begin{alignedat}{1}
v_n(t,x)&:=\eta(x)u(t,e^nx)\in \bH^1_{p,\Theta-p}(G,T),\\
f^{0,n}&:=\eta(x)f^0(t,e^nx) - e^{-n}\eta_{x^i}(x)f^i(t,e^nx)  \in \bL_{p,\Theta+p}(G,T),\\
f^{i,n}&:= e^{-n} \eta(x) f^i(t,e^nx) \in \bL_{p,\Theta}(G,T),\quad i=1,2,\qquad \text{and}\\
(g^{k,n})_{k\in\bN}&:=(\eta(x)g^k(t,e^n x))_{k\in\bN}\in\bL_{p,\Theta}(G,T;\ell_2),
\end{alignedat}
\right\}
\end{equation}
then we may apply Lemma~\ref{lem:frH:cont:C1} and obtain for arbitrary $a>0$ that
\begin{align*}
\E&\sgeklam{\sup_{t\leq T} \gnnrm{v_n(t,\cdot)}{L_{p,\Theta}(G)}^p}\\
&\leq C
\ssgrklam{
a\, \nnrm{\eta(x)u(t,e^n x)}{\bL_{p,\Theta-p}(G,T)}^p
+
a\, \sum_{i} e^{np}\nnrm{\eta(x)u_{x^i}(t,e^nx)}{\bL_{p,\Theta}(G,T)}^p\\
&\qquad\qquad
+ 
a \,\sum_{i}\nnrm{\eta_{x^i}(x)u(t,e^nx)}{\bL_{p,\Theta}(G,T)}^p
+
a^{-1}\sum_{i}\nnrm{\eta(x)f^0(t,e^nx)}{\bL_{p,\Theta+p}(G,T)}^p\\
&\qquad\qquad
+
a^{-1}e^{-np}\sum_{i}\nnrm{\eta_{x^i}(x)f^i(t,e^nx)}{\bL_{p,\Theta+p}(G,T)}^p
+
a^{-1}e^{-np}\sum_{i}\nnrm{\eta(x)f^i(t,e^nx)}{\bL_{p,\Theta}(G,T)}^p\\
&\qquad\qquad
+
\nnrm{\eta(x)g^k(t,e^nx)}{\bL_{p,\Theta}(G,T;\ell_2)}^p
},
\end{align*}
and, as a consequence (set $a=e^{-np}>0$ for $n\in\bZ$), 
\begin{align*}
\E&\sgeklam{\sup_{t\leq T}\nnrm{u(t,\cdot)}{L_{p,\Theta,\theta}(\domaina)}^p}\\
&\leq C\,
\ssgrklam{
\sum_{n} e^{n(\theta-p)}\, \nnrm{\eta(x)u(t,e^n x)}{\bL_{p,\Theta-p}(G,T)}^p
+
\sum_{n,i}e^{n\theta}\nnrm{\eta(x)u_{x^i}(t,e^nx)}{\bL_{p,\Theta}(G,T)}^p\\
&\qquad \qquad
+ 
\sum_{n,i}e^{n(\theta-p)} \,\nnrm{\eta_{x^i}(x)u(t,e^nx)}{\bL_{p,\Theta}(G,T)}^p
+
\sum_{n}e^{n(\theta+p)}\nnrm{\eta(x)f^0(t,e^nx)}{\bL_{p,\Theta+p}(G,T)}^p\\
&\qquad\qquad
+
\sum_{n,i}e^{n\theta}\nnrm{\eta_{x^i}(x)f^i(t,e^nx)}{\bL_{p,\Theta+p}(G,T)}^p
+
\sum_{n,i}e^{n\theta}\nnrm{\eta(x)f^i(t,e^nx)}{\bL_{p,\Theta}(G,T)}^p\\
&\qquad\qquad
+
\sum_{n}e^{n\theta}\nnrm{\eta(x)g^k(t,e^nx)}{\bL_{p,\Theta}(G,T;\ell_2)}^p
}.
\end{align*}
Using Lemma~\ref{lem:LpTt:localized}, in particular~\eqref{eq:LpTt:wT:Tt}, 
the terms on the right hand side can be estimated by a finite constant times
\begin{align*}
\nnrm{u}{\bL_{p,\Theta-p,\theta-p}(\domaina,T)}^p
+
\nnrm{u_{x^i}}{\bL_{p,\Theta,\theta}(\domaina,T)}^p
&+
\nnrm{u}{\bL_{p,\Theta,\theta-p}(\domaina,T)}^p
+
\nnrm{f^0}{\bL_{p,\Theta+p,\theta+p}(\domaina,T)}^p\\
&+
\nnrm{f^i}{\bL_{p,\Theta+p,\theta}(\domaina,T)}^p
+
\nnrm{f^i}{\bL_{p,\Theta,\theta}(\domaina,T)}^p
+
\nnrm{g}{\bL_{p,\Theta,\theta}(\domaina,T;\ell_2)}^p.
\end{align*}
In particular, this shows that~\eqref{eq:LpTt:localized:vn} holds for all $n\in\bZ$, and 
 implies~\eqref{eq:frH:cont:f0fi} since $L_{p,\Theta_1,\theta_0}(\domaina)\hookrightarrow L_{p,\Theta_2,\theta_0}(\domaina)$ for all $\theta_0\in\bR$ and $\Theta_1\leq\Theta_2$.
\end{proof}

\begin{corollary}
Let $1<p<\infty$, $n\in\bN$, and $\Theta,\theta\in\bR$. Then $(\frH^n_{p,\Theta,\theta,0}(\domaina,T),\nnrm{\cdot}{\frH^n_{p,\Theta,\theta,0}(\domaina,T)})$ is a Banach space.
\end{corollary}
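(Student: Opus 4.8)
The plan is to recognise $\frH^n_{p,\Theta,\theta,0}(\domaina,T)$ as (the graph of) a subspace of the product Banach space
\[
P:=\bwsob^n_{p,\Theta-p,\theta-p}(\domaina,T)\times\bwsob^{n-2}_{p,\Theta+p,\theta+p}(\domaina,T)\times\bwsob^{n-1}_{p,\Theta,\theta}(\domaina,T;\ell_2),
\]
via $u\mapsto(u,\bD u,\bS u)$. Since each factor is an $L_p(\Omega_T,\pred_T,\prob_T;\,\cdot\,)$-space over one of the Banach spaces $\wsob^m_{p,\cdot,\cdot}(\domaina)$, the space $P$ is complete, and the norm of Definition~\ref{def:frH} is exactly the $P$-norm of the image; hence completeness of $\frH^n_{p,\Theta,\theta,0}(\domaina,T)$ is equivalent to the image being a \emph{closed} subspace of $P$. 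Given a Cauchy sequence $(u_m)_m$, the three components $(u_m)$, $(\bD u_m)$, $(\bS u_m)$ are Cauchy in the respective factors and therefore converge to some $u$, $f$, $g$ in $P$; the whole task is to show that $(u,f,g)$ again lies in the image, i.e.\ that $u\in\frH^n_{p,\Theta,\theta,0}(\domaina,T)$ with $\bD u=f$ and $\bS u=g$. Before carrying this out I would reduce to the range $1<\Theta<p+1$ demanded by Lemma~\ref{lem:frH:cont}: multiplication by $(\dist/\distv)^{c}$ with $c=(\Theta-\Theta')/p$ is a fixed, deterministic, time-independent factor that maps $\frH^n_{p,\Theta,\theta,0}(\domaina,T)$ isomorphically onto $\frH^n_{p,\Theta',\theta,0}(\domaina,T)$ for any $\Theta'$ (it shifts the $\Theta$-exponent of all three components by the same amount, leaves $\theta$ untouched, is a bounded multiplier on the whole weighted scale with bounded inverse, commutes with $\bD$ and $\bS$, and preserves the distributional formulation~\eqref{eq:distribution} since $(\dist/\distv)^c\varphi$ is again an admissible test function). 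Choosing $\Theta'\in(1,p+1)$, we may assume $1<\Theta<p+1$ from now on.

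The next step is to produce a time-continuous version of $u$. Through the embeddings $\bwsob^n\hookrightarrow\bwsob^1$, $\bwsob^{n-2}\hookrightarrow\bwsob^{-1}$ and $\bwsob^{n-1}\hookrightarrow\bL$, every element of $\frH^n_{p,\Theta,\theta,0}(\domaina,T)$ belongs to $\frwsob^1_{p,\Theta,\theta,0}(\domaina,T)$, so Lemma~\ref{lem:frH:cont} applies to each difference $u_m-u_{m'}$, which solves the equation with forcing $\bD u_m-\bD u_{m'}$, noise $\bS u_m-\bS u_{m'}$ and vanishing initial value. This yields
\[
\E\sgeklam{\sup_{t\leq T}\nnrm{u_m(t,\cdot)-u_{m'}(t,\cdot)}{L_{p,\Theta,\theta}(\domaina)}^p}
\leq C\,\nnrm{u_m-u_{m'}}{\frH^n_{p,\Theta,\theta,0}(\domaina,T)}^p,
\]
and the right-hand side tends to $0$ as $m,m'\to\infty$ because the $\frH^n$-norm dominates each of the three norms occurring in Lemma~\ref{lem:frH:cont}. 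Hence $(u_m)$ is Cauchy, and therefore convergent, in $L_p(\Omega;\cont([0,T];L_{p,\Theta,\theta}(\domaina)))$; its limit is a process that is continuous in time and coincides almost everywhere with $u$ (both are limits of a common subsequence inside the space of measurable functions on $\Omega\times(0,T)\times\domaina$), and I would take this continuous representative as the version of $u$.

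With a continuous-in-time version at hand I would pass to the limit in~\eqref{eq:distribution}. Fix $\varphi\in\cont^\infty_0(\domaina)$ and $t\leq T$. The left-hand sides $(u_m(t,\cdot),\varphi)$ converge to $(u(t,\cdot),\varphi)$ in $L_p(\Omega)$ by the uniform-in-time convergence just established. On the right, $\int_0^t(\bD u_m(s,\cdot),\varphi)\dd s\to\int_0^t(f(s,\cdot),\varphi)\dd s$ in $L_p(\Omega)$ by Hölder's inequality in $s$ and the convergence $\bD u_m\to f$ in $\bwsob^{n-2}_{p,\Theta+p,\theta+p}(\domaina,T)$ (the pairing with the fixed $\varphi$ being continuous), while $\sum_k\int_0^t(\bS u_m^k(s,\cdot),\varphi)\dd w^k_s\to\sum_k\int_0^t(g^k(s,\cdot),\varphi)\dd w^k_s$ in $L_p(\Omega)$ by the Burkholder--Davis--Gundy inequality (as used in the proof of Theorem~\ref{thm:LpEstimate}) together with Jensen's inequality in $s$ and the convergence $\bS u_m\to g$ in $\bL_{p,\Theta,\theta}(\domaina,T;\ell_2)$. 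Passing to the limit shows that $u$, $f$, $g$ satisfy~\eqref{eq:distribution} with $u(0,\cdot)=0$; thus $u\in\frH^n_{p,\Theta,\theta,0}(\domaina,T)$ with $\bD u=f$, $\bS u=g$, the image is closed, and $\nnrm{u_m-u}{\frH^n_{p,\Theta,\theta,0}(\domaina,T)}\to0$ follows at once from the convergence of the three components.

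The main obstacle is the evaluation underlying the previous paragraph: the $P$-convergence of $(u_m)$ is only of $L_p(\Omega_T)$-type and by itself assigns no meaning to $(u_m(t,\cdot),\varphi)$ at a fixed time $t$, let alone allows its passage to the limit. This is precisely what Lemma~\ref{lem:frH:cont} repairs, by upgrading the control to a supremum-in-time estimate and thereby furnishing the continuous version on which~\eqref{eq:distribution} can be read off pointwise in $t$; the weight restriction $1<\Theta<p+1$ built into that lemma is exactly the reason for the preliminary reduction in the first paragraph.
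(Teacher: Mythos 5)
Your main line of argument is precisely the paper's own proof, spelled out in full: a Cauchy sequence in $\frH^n_{p,\Theta,\theta,0}(\domaina,T)$ has components $(u_m,\bD u_m,\bS u_m)$ that converge in the three complete spaces $\bwsob^n_{p,\Theta-p,\theta-p}(\domaina,T)$, $\bwsob^{n-2}_{p,\Theta+p,\theta+p}(\domaina,T)$, $\bwsob^{n-1}_{p,\Theta,\theta}(\domaina,T;\ell_2)$, and Lemma~\ref{lem:frH:cont} (applied through the embedding $\frH^n_{p,\Theta,\theta,0}(\domaina,T)\subseteq\frwsob^1_{p,\Theta,\theta,0}(\domaina,T)$) upgrades this to uniform-in-time convergence, producing a continuous version of the limit in which~\eqref{eq:distribution} can be passed to the limit via H\"older and Burkholder--Davis--Gundy. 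That part of your write-up is sound. You also correctly spotted something the paper's one-sentence proof passes over in silence: Lemma~\ref{lem:frH:cont} is only available for $1<\Theta<p+1$ (and $2\le p<\infty$), while the Corollary is stated for all $\Theta,\theta\in\bR$, so a weight-shift reduction of the kind you attempt is genuinely needed for the full claim (nothing is needed for $1<p<2$, since $\frH^n_{p,\Theta,\theta,0}(\domaina,T)$ is only defined for $p\ge2$).

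The gap is in that reduction step. On the angular domain the boundary distance $\dist$ is \emph{not} smooth in the interior unless $\kappa_0=\pi$: for $\kappa_0<\pi$ it is the minimum of the distances to the two boundary rays and is not even differentiable along the bisector $\vartheta=\kappa_0/2$, and for $\kappa_0>\pi$ it is $C^1$ but not $C^2$ across the rays $\vartheta=\pi/2$ and $\vartheta=\kappa_0-\pi/2$. Consequently (i) $(\dist/\distv)^c\varphi$ is in general \emph{not} an element of $\cont_0^\infty(\domaina)$ --- it need not even be $C^1$ --- so it is not an admissible test function in~\eqref{eq:distribution}, contrary to your parenthetical claim; and (ii) multiplication by $(\dist/\distv)^c$ is not bounded on $H^n_{p,\Theta,\theta}(\domaina)$ for $n\ge 2$: the second distributional derivative of $(\dist/\distv)^c$ has a singular part supported on the set where $\nabla\dist$ jumps, so $(\dist/\distv)^c u$ generally fails to have function-valued second derivatives even when $u\in H^2$. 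Both defects are repairable, and the repair is cheap: since $\dist/\distv$ is homogeneous of degree $0$, it is a (Lipschitz) function of the polar angle $\vartheta$ alone, comparable to $\sin(\pi\vartheta/\kappa_0)$; so replace your multiplier by $m(\vartheta):=\sin(\pi\vartheta/\kappa_0)^c$ (or any smooth regularized distance quotient $\widetilde\dist/\distv$ with $\widetilde\dist\sim\dist$ and $|D^\gamma\widetilde\dist|\le C\widetilde\dist^{1-|\gamma|}$). Then $m\sim(\dist/\distv)^c$, $|D^\gamma m|\le C\,m\,\dist^{-|\gamma|}$ on $\domaina$, so multiplication by $m$ is an isomorphism of the whole scale $H^k_{p,\Theta,\theta}(\domaina)$, $k\in\bZ$, shifting $\Theta$ by $cp$ and leaving $\theta$ fixed, and $m\varphi\in\cont_0^\infty(\domaina)$, so the distributional formulation and the operators $\bD$, $\bS$ are preserved exactly as you intended. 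With this substitution your proof is complete and in fact covers the full range of $\Theta$ claimed by the Corollary, which the paper's own argument, read literally, does not.
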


\begin{proof}
This follows from the fact that the the spaces $\bwsob^n_{p,\Theta,\theta}(\domain,T)$ and $\bwsob^n_{p,\Theta,\theta}(\domain,T;\ell_2)$ are separable Banach spaces, by using Lemma~\ref{lem:frH:cont}. 
\end{proof}

\mysection{Some properties of weighted Sobolev space of negative order}\label{sec:dualspace}

In this section we analyse the duality relationships between the weighted Sobolev spaces introduced in Section~\ref{sec:MaxReg}. We first prove the following fundamental result.

\begin{lemma}\label{lem:wSob:test:dense}
Let $1<p<\infty$ and $\Theta,\theta\in \bR$. Then $\cont^\infty_0(\domaina)\subseteq H^1_{p,\Theta,\theta}(\domaina)$ dense. 
\end{lemma}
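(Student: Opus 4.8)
The plan is to approximate an arbitrary $u\in H^1_{p,\Theta,\theta}(\domaina)$ by a truncation-and-mollification procedure. Since the weight $\distv^{\theta-2}(\dist/\distv)^{\Theta-2}$ degenerates or blows up only at the vertex $0$, at infinity, and along the two boundary rays, while test functions are compactly supported inside the open sector, I would first push the support of $u$ away from all of these sets and then mollify where the weight is comparable to a constant. \emph{Step 1: localize away from the vertex and infinity.} Fix a radial cut-off $\psi_m$ with $\psi_m\equiv 1$ on $\{1/m\le|x|\le m\}$, vanishing near $0$ and $\infty$, chosen dilation-adapted so that $|x|\,|\nabla\psi_m|\le C$ uniformly in $m$ (for instance a profile in $\log|x|$). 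Writing $\nabla(\psi_m u)=\psi_m\nabla u+u\nabla\psi_m$ and using $\dist\le\distv=|x|$, the cross term obeys $\dist\,|\nabla\psi_m|\le C$ and is supported on transition annuli collapsing to $\{0\}$ and to infinity; hence $\psi_m u\to u$ in $H^1_{p,\Theta,\theta}(\domaina)$ by dominated convergence, using only that $u,\dist\nabla u\in L_{p,\Theta,\theta}(\domaina)$.

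\emph{Step 2: localize away from the boundary rays.} With $u$ now supported in a bounded region bounded away from the vertex, I would multiply by $\chi_\eps:=\phi(\dist/\eps)$, where $\phi$ vanishes on $[0,1]$ and equals $1$ on $[2,\infty)$. Since $\dist$ is Lipschitz with $|\nabla\dist|\le 1$ a.e., the cut-off $\chi_\eps$ is Lipschitz and satisfies $\dist\,|\nabla\chi_\eps|\le C$ on its support $\{\eps\le\dist\le 2\eps\}$, a region collapsing to $\partial\domaina$. The same dominated-convergence argument as in Step 1 then gives $\chi_\eps u\to u$ in $H^1_{p,\Theta,\theta}(\domaina)$.

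\emph{Step 3: mollify.} After Steps 1--2 the function $v:=\chi_\eps\psi_m u$ is compactly supported in the open sector $\domaina$, so on a compact neighbourhood $K'\Subset\domaina$ of its support both $\dist$ and $\distv$ are bounded between positive constants, and there the $H^1_{p,\Theta,\theta}(\domaina)$-norm is equivalent to the ordinary $W^{1,p}(K')$-norm. Standard mollification $v*\varphi_\delta\in\cont^\infty_0(\domaina)$ (with $\delta$ below the positive distance of $\mathrm{supp}\,v$ to $\partial\domaina$) then converges to $v$ in $W^{1,p}$, hence in $H^1_{p,\Theta,\theta}(\domaina)$. Combining the three steps yields the claimed density.

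\emph{Main obstacle.} The delicate point is to make the two truncation steps work simultaneously for \emph{all} $\Theta,\theta\in\bR$, with no restriction on the weight exponents. This is precisely why the cut-offs must be chosen adapted to the geometry: the radial cut-off controls the vertex/infinity behaviour through the dilation-invariant bound $|x|\,|\nabla\psi_m|\le C$, and the distance cut-off controls the boundary behaviour through $\dist\,|\nabla\chi_\eps|\le C$. In both cases the troublesome term $\dist\,u\,\nabla(\text{cut-off})$ is then dominated by $C|u|^p$ times the weight on a region of vanishing weighted measure, so dominated convergence applies regardless of the sign or size of $\Theta-2$ and $\theta-2$. A minor technical caveat is that $\dist$ fails to be differentiable along the bisector of the sector; this is harmless, since $\chi_\eps$ need only be Lipschitz, the actual smoothing being deferred to the final mollification.
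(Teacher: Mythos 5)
Your proof is correct, and it takes a genuinely different route from the paper. The paper disposes of this lemma in one line: it invokes the known density of $\cont^\infty_0$ in the purely boundary-weighted (Krylov-type) spaces $H^1_{p,\Theta}(\domaina)$ and leaves the details to the reader -- implicitly, the reduction from the mixed-weight space to those spaces goes through the dyadic dilation machinery of Lemma~\ref{lem:LpTt:localized}, which rewrites the $H^1_{p,\Theta,\theta}(\domaina)$-norm as a weighted series of norms of localized, dilated pieces on a fixed annular region. You instead give a self-contained truncation-and-mollification argument, and its three steps are sound: the dilation-adapted radial cut-off with $\abs{x}\,\abs{\nabla\psi_m}\leq C$, together with the elementary but essential inequality $\dist\leq\distv$ (the vertex lies on $\partial\domaina$), bounds the cross term $\dist\,\abs{u\nabla\psi_m}$ by $C\abs{u}$; the cut-off $\chi_\eps=\phi(\dist/\eps)$ satisfies $\dist\,\abs{\nabla\chi_\eps}\leq C$ on $\{\eps\leq\dist\leq 2\eps\}$ because $\dist$ is $1$-Lipschitz; and after both truncations the function is supported in a compact subset of the open sector, where both weights are comparable to constants, so the weighted norm is equivalent to $W^{1,p}$ for functions supported there and standard mollification finishes. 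In each truncation step the dominating function $C(\abs{u}^p+\abs{\dist\nabla u}^p)\distv^{\theta-2}(\dist/\distv)^{\Theta-2}$ is integrable by hypothesis and the cut-off errors vanish pointwise, so dominated convergence applies with no restriction on $\Theta,\theta$ -- exactly the point your ``main obstacle'' paragraph addresses. What your approach buys is a complete, elementary proof that makes transparent why all weight exponents are admissible; what the paper's approach buys is brevity, at the cost of outsourcing the substance to known results on $H^1_{p,\Theta}(\domaina)$ and to the localization lemma.
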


\begin{proof}
This is a consequence of the fact that $\cont_0^\infty(\domaina)\subseteq H^1_{p,\Theta}(\domaina)$ for all $\Theta\in\bR$. The details are left to the reader.
\end{proof}

\begin{lemma}\label{lem:dual:Lo}
Let $1<p<\infty$ and $\Theta,\theta\in \bR$. Moreover, let $p'>1$ and $\Theta',\theta'\in\bR$ be such that
\[
\frac{1}{p}+\frac{1}{p'}=1,\quad \frac{\theta}{p}+\frac{\theta'}{p'}=\frac{\Theta}{p}+\frac{\Theta'}{p'}=2.
\]
Then the mapping
\begin{align*}
I\colon L_{p',\Theta',\theta'}(\cD) &\to (L_{p,\Theta,\theta}(\cD))^*\\
f & \mapsto If:=\int_{\cD} f\cdot\,\dx,
\end{align*}
is an isometric isomorphism.
\end{lemma}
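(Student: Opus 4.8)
The plan is to realize $L_{p,\Theta,\theta}(\cD)$ as an ordinary weighted Lebesgue space and then transport the classical duality $L_{p'}(\mu)\cong (L_p(\mu))^*$ through a multiplication isomorphism. Set $w:=\distv^{\theta-2}\grklam{\dist/\distv}^{\Theta-2}$ and $w':=\distv^{\theta'-2}\grklam{\dist/\distv}^{\Theta'-2}$, so that $L_{p,\Theta,\theta}(\cD)=L_p(\cD,\mu)$ with $\mu:=w\,\dx$, and $L_{p',\Theta',\theta'}(\cD)=L_{p'}(\cD,w'\,\dx)$. Since $w$ is finite and strictly positive on $\cD$ and is bounded above and below on each set $\ggklam{x\in\cD:\varepsilon<\abs{x}<\varepsilon^{-1},\ \dist(x)>\varepsilon}$, $\varepsilon\in(0,1)$, which exhaust $\cD$, the measure $\mu$ is $\sigma$-finite; this is exactly what makes the Riesz representation of $(L_p(\cD,\mu))^*$ available.

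The first step is the algebraic weight identity. Using $\tfrac1p+\tfrac1{p'}=1$ together with $\tfrac\theta p+\tfrac{\theta'}{p'}=2$ and $\tfrac\Theta p+\tfrac{\Theta'}{p'}=2$, the exponents of $\distv$ and of $\dist/\distv$ cancel, giving
\[
w^{1/p}\,(w')^{1/p'}=1,\qquad\text{equivalently}\qquad w'=w^{1-p'}.
\]
Consequently the multiplication map $S\colon f\mapsto w^{-1}f$ satisfies
\[
\nnrm{Sf}{L_{p'}(\cD,\mu)}^{p'}
=\int_\cD\abs{w^{-1}f}^{p'}w\,\dx
=\int_\cD\abs{f}^{p'}w^{1-p'}\,\dx
=\int_\cD\abs{f}^{p'}w'\,\dx
=\nnrm{f}{L_{p',\Theta',\theta'}(\cD)}^{p'},
\]
so $S$ is an isometric isomorphism of $L_{p',\Theta',\theta'}(\cD)$ onto $L_{p'}(\cD,\mu)$, with inverse $h\mapsto wh$.

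The second step invokes the standard duality theorem: for the $\sigma$-finite measure $\mu$ and $1<p<\infty$, the pairing $J\colon h\mapsto\grklam{g\mapsto\int_\cD hg\dd\mu}$ is an isometric isomorphism of $L_{p'}(\cD,\mu)$ onto $(L_p(\cD,\mu))^*$. I would then simply observe that for $f\in L_{p',\Theta',\theta'}(\cD)$ and $g\in L_{p,\Theta,\theta}(\cD)$,
\[
(If)(g)=\int_\cD fg\,\dx=\int_\cD (w^{-1}f)\,g\,w\,\dx=\grklam{J(Sf)}(g),
\]
so that $I=J\circ S$. As a composition of two isometric isomorphisms, $I$ is itself an isometric isomorphism, which is precisely the assertion; in particular injectivity, surjectivity, and norm preservation all come for free, and the separate H\"older estimate $\abs{(If)(g)}\le\nnrm{f}{L_{p',\Theta',\theta'}(\cD)}\nnrm{g}{L_{p,\Theta,\theta}(\cD)}$ is subsumed as the bounded half of the cited duality.

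The only non-routine ingredients are the measure-theoretic preliminaries—confirming that $w>0$ a.e.\ and that $\mu=w\,\dx$ is $\sigma$-finite, so the classical $L_p(\mu)$-duality genuinely applies—and the exponent bookkeeping yielding $w^{1/p}(w')^{1/p'}=1$ from the three relations linking $(\Theta,\theta)$ to $(\Theta',\theta')$. Once these are verified, the statement is a one-line consequence of the factorization $I=J\circ S$; indeed the lemma may be viewed as the precise form of the elementary principle that weighted $L_p$-duality is the unweighted duality read off against the dual weight.
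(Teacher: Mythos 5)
Your proof is correct, and it rests on the same underlying mechanism as the paper's: the conjugacy relations force the two weights to be dual to each other, $w^{1/p}(w')^{1/p'}=1$, so that multiplication by the weight transfers the statement to classical $L_p$-duality for the $\sigma$-finite measure $\mu=w\,\dx$. The organization, however, is genuinely different. You package the whole lemma into the factorization $I=J\circ S$, where $S\colon f\mapsto w^{-1}f$ is an isometric isomorphism of $L_{p',\Theta',\theta'}(\cD)$ onto $L_{p'}(\cD,\mu)$ and $J$ is the classical duality map; boundedness, the norm identity, injectivity and surjectivity then all follow at once, at the price of invoking the full isometric-isomorphism form of the classical theorem. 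The paper instead proves the isometry by hand: H\"older's inequality gives $\nnrm{If}{(L_{p,\Theta,\theta}(\cD))^*}\leq\nnrm{f}{L_{p',\Theta',\theta'}(\cD)}$, and equality is obtained by testing against the explicit normalized element $g=\textup{sign}(f)\grklam{\abs{f}/\nnrm{f}{L_{p',\Theta',\theta'}(\cD)}}^{p'/p}\distv^{\theta'-2}\grklam{\dist/\distv}^{\Theta'-2}$; only for surjectivity does it appeal to the Riesz representation theorem for the weighted measure, and there it performs exactly your weight transfer, setting $f:=\tilde f\,\distv^{\theta-2}\grklam{\dist/\distv}^{\Theta-2}$. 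So your route is shorter and makes the structural reason transparent (weighted duality is unweighted duality read against the dual weight), whereas the paper's is more self-contained: the extremal computation is explicit, and only the representation (surjectivity) half of the classical duality theorem is needed. Your $\sigma$-finiteness check is not superfluous---it is precisely what licenses the citation of the classical theorem---and it is correct as written.
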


\begin{proof}
First we check that $I$ is well-defined and that
\[
\nnrm{If}{(L_{p,,\Theta,\theta}(\cD))^*}
\leq
\nnrm{f}{L_{p',\Theta',\theta'}(\cD)},
\qquad
f\in L_{p',\Theta',\theta'}(\cD).
\]
But this follows trivially by H\"older's inequality, since
\[
\frac{\Theta-2}{p}+\frac{\Theta'-2}{p'}
=
\frac{\theta-2}{p}+\frac{\theta'-2}{p'}
=
2-2\cdot 1 =0,
\]
so that
\begin{align*}
\int_\cD f\,g\,\dx
=
\int_\cD 
f\, \distv^{\frac{\theta'-2}{p'}} \ssgrklam{\frac{\dist}{\distv}}^{\frac{\Theta'-2}{p'}} 
g\,\distv^{\frac{\theta-2}{p}} \ssgrklam{\frac{\dist}{\distv}}^{\frac{\Theta-2}{p}}\,\dx
\leq
\nnrm{f}{L_{p',\Theta',\theta'}(\cD)}
\nnrm{g}{L_{p,\Theta,\theta}(\cD)}.
\end{align*}
Therefore,
\[
\nnrm{If}{(L_{p,\Theta,\theta}(\cD))^*}
=
\sup_{\substack{ g\in L_{p,\Theta,\theta}(\cD)\\ \nnrm{g}{L_{p,\Theta,\theta}(\cD)}=1}}
(If)(g)
\leq
\nnrm{f}{L_{p',\Theta',\theta'}(\cD)},
\qquad
f\in L_{p',\Theta',\theta'}(\cD).
\]
Moreover, for every $f\in L_{p',\Theta',\theta'}(\cD)\setminus\{0\}$, if we choose
\[
g:=\textup{sign}(f)\ssgrklam{\frac{\abs{f}}{\nnrm{f}{L_{p',\Theta',\theta'}(\cD)}}}^{\frac{p'}{p}}
\distv^{\theta'-2}
\ssgrklam{\frac{\dist}{\distv}}^{\Theta'-2},
\]
then, since
\[
p(\theta'-2)+\theta-2
=
(\theta'-2) + p\ssgrklam{\frac{\theta'-2}{p'}+\frac{\theta-2}{p}}
=
\theta'-2
\quad\text{and}\quad
p(\Theta'-2)+\Theta-2
=
\Theta'-2,
\]
we have
\begin{align*}
\nnrm{g}{L_{p,\Theta,\theta}(\cD)}^p
=
1
\end{align*}
and
\[
\int_\cD f(x)g(x)\,\dx
=
\nnrm{f}{L_{p',\Theta',\theta'}(\cD)}.
\]
Thus, 
\[
\nnrm{If}{(L_{p,\Theta,\theta}(\cD))^*}
=
\nnrm{f}{L_{p',\Theta',\theta'}(\cD)},\qquad f\in L_{p',\Theta',\theta'}(\cD),
\]
which means that the linear mapping $I$ is a well-defined isometry. 
To prove that $I$ is surjective, we apply Riesz' representation theorem, which gives us that for every $v\in (L_{p,\Theta,\theta}(\domaina))^*=(L_p(\cD,\rho_\circ(x)^{\theta-2}(\dist/\distv)^{\Theta-2}\,\dx))^*$ there exists a function $\tilde{f}\in L_{p'}(\cD,\rho_\circ(x)^{\theta-2}(\dist/\distv)^{\Theta-2}\,\dx)$, such that
\[
v(\varphi)
=
\int_\cD \tilde{f}\,\varphi\,\distv^{\theta-2}\ssgrklam{\frac{\dist}{\distv}}^{\Theta-2}\,\dx,\qquad \varphi\in L_{p,\Theta,\theta}(\cD).
\] 
Set $f:=\tilde{f}\distv^{\theta-2}(\dist/\distv)^{\Theta-2}$. Then, $f\in L_{p',\Theta',\theta'}(\cD)$, and
\[
v(\varphi)
=
\int_\cD f(x)\varphi(x)\,\dx,\qquad \varphi\in L_{p,\Theta,\theta}(\cD).
\]
Thus, $I\colon L_{p',\theta'}^{[\circ]}(\cD)\to (L_{p,\theta}^{[\circ]}(\cD))^*$ is surjective.
Since it is also an isometry, $I$ is an isometric isomorphism.
\end{proof}

The `parabolic version' of Lemma~\ref{lem:dual:Lo} reads as follows.

\begin{lemma}\label{lem:dual:parabolic:Lo}
$1<p<\infty$ and $\Theta,\theta\in \bR$. Moreover, let $p'>1$ and $\Theta',\theta'\in\bR$ be such that
\[
\frac{1}{p}+\frac{1}{p'}=1,\quad \frac{\theta}{p}+\frac{\theta'}{p'}=\frac{\Theta}{p}+\frac{\Theta'}{p'}=2.
\]
Then the mapping
\begin{align*}
I\colon \bL_{p',\Theta',\theta'}(\cD,T) &\to (\bL_{p,\Theta,\theta}(\cD,T))^*\\
f & \mapsto If:=\int_{\Omega_T}\int_{\cD} f\cdot\,\dx\,\mathrm{d}\prob_T,
\end{align*}
is an isometric isomorphism.
\end{lemma}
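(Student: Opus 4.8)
The plan is to deduce the parabolic statement from Lemma~\ref{lem:dual:Lo} by reducing everything to a single scalar $L_p$--$L_{p'}$ duality over a product measure space. Write $w:=\distv^{\theta-2}\ssgrklam{\frac{\dist}{\distv}}^{\Theta-2}$ for the weight defining $L_{p,\Theta,\theta}(\cD)$ and $w':=\distv^{\theta'-2}\ssgrklam{\frac{\dist}{\distv}}^{\Theta'-2}$ for the one defining $L_{p',\Theta',\theta'}(\cD)$. The hypotheses on $(\Theta,\theta)$ and $(\Theta',\theta')$ give the two balancing identities $\frac{\theta-2}{p}+\frac{\theta'-2}{p'}=\frac{\Theta-2}{p}+\frac{\Theta'-2}{p'}=0$, equivalently $w'=w^{1-p'}$ and $w^{1/p}(w')^{1/p'}=1$, exactly as in the proof of Lemma~\ref{lem:dual:Lo}. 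By the Fubini-type identity for Bochner spaces,
\[
\bL_{p,\Theta,\theta}(\cD,T)=L_p\grklam{\Omega_T,\pred_T,\prob_T;L_p(\cD,\cB(\cD),w\,\dx)}\cong L_p\grklam{\Omega_T\times\cD,\pred_T\otimes\cB(\cD),\prob_T\otimes(w\,\dx)}
\]
isometrically, and the product measure on the right is $\sigma$-finite.

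First I would check that $I$ is well defined and norm-nonincreasing, i.e.\ $\nnrm{If}{(\bL_{p,\Theta,\theta}(\cD,T))^*}\leq\nnrm{f}{\bL_{p',\Theta',\theta'}(\cD,T)}$. This is the Hölder estimate of Lemma~\ref{lem:dual:Lo} carried out on $\Omega_T\times\cD$: writing $fg=\grklam{f\,(w')^{1/p'}}\grklam{g\,w^{1/p}}$ and using $w^{1/p}(w')^{1/p'}=1$, Hölder with respect to $\prob_T\otimes\dx$ bounds $\int_{\Omega_T}\int_\cD\abs{fg}\,\dx\,\mathrm d\prob_T$ by $\nnrm{f}{\bL_{p',\Theta',\theta'}(\cD,T)}\nnrm{g}{\bL_{p,\Theta,\theta}(\cD,T)}$. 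For the reverse inequality (norm attainment, hence injectivity) I would insert the very same test function used in Lemma~\ref{lem:dual:Lo}, namely $g:=\textup{sign}(f)\ssgrklam{\frac{\abs{f}}{\nnrm{f}{\bL_{p',\Theta',\theta'}(\cD,T)}}}^{p'/p}w'$; since $g$ is a pointwise Borel function of $f$, it inherits $\pred_T\otimes\cB(\cD)$-measurability, so $g\in\bL_{p,\Theta,\theta}(\cD,T)$ with $\nnrm{g}{\bL_{p,\Theta,\theta}(\cD,T)}=1$ and $If(g)=\nnrm{f}{\bL_{p',\Theta',\theta'}(\cD,T)}$. Thus $I$ is an isometry.

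The substantive step is surjectivity. Given $v\in(\bL_{p,\Theta,\theta}(\cD,T))^*$, I would view it through the scalar identification above as a bounded functional on $L_p(\prob_T\otimes(w\,\dx))$ and invoke the scalar $L_p$-representation theorem on this $\sigma$-finite space to produce $\tilde f\in L_{p'}(\prob_T\otimes(w\,\dx))$ with $v(\varphi)=\int_{\Omega_T}\int_\cD\tilde f\,\varphi\,w\,\dx\,\mathrm d\prob_T$. Setting $f:=\tilde f\,w$ and using $w^{p'}w'=w$ exactly as in Lemma~\ref{lem:dual:Lo} gives $f\in\bL_{p',\Theta',\theta'}(\cD,T)$ together with $v(\varphi)=\int_{\Omega_T}\int_\cD f\,\varphi\,\dx\,\mathrm d\prob_T=If(\varphi)$, proving that $I$ is onto. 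Alternatively one may quote the abstract Bochner duality $(L_p(\mu;X))^*=L_{p'}(\mu;X^*)$, valid since $X=L_{p,\Theta,\theta}(\cD)$ is reflexive and hence $X^*$ has the Radon--Nikodym property, combined with the pointwise identification $X^*=L_{p',\Theta',\theta'}(\cD)$ furnished by Lemma~\ref{lem:dual:Lo}.

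The hard part is the measurability bookkeeping hidden in surjectivity: one must ensure that the scalar representative $\tilde f$ is genuinely $\pred_T\otimes\cB(\cD)$-measurable, so that $f=\tilde f\,w$ defines a \emph{predictable} element of $\bL_{p',\Theta',\theta'}(\cD,T)$ rather than merely a jointly measurable function. This is exactly what the Fubini-type isometry above delivers, and it rests on the separability of $L_{p,\Theta,\theta}(\cD)$ (so that, by Pettis' theorem, strong measurability coincides with joint $\pred_T\otimes\cB(\cD)$-measurability) together with the $\sigma$-finiteness of $\prob_T\otimes(w\,\dx)$. Granting this, every remaining computation is a verbatim transcription of the proof of Lemma~\ref{lem:dual:Lo} with one extra integration $\int_{\Omega_T}(\cdot)\,\mathrm d\prob_T$.
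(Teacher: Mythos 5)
Your proposal is correct, but your primary argument takes a genuinely different route from the paper's, which is worth noting. The paper's proof is a two-line affair: since $L_{p,\Theta,\theta}(\cD)$ is reflexive, its dual has the Radon--Nikodym property, so the abstract Bochner-space duality $(L_p(S;E))^*\cong L_{p'}(S;E^*)$ of \cite[Corollary~1.3.22]{HytNeeVer+2016} applies, and Lemma~\ref{lem:dual:Lo} supplies the identification $E^*\cong L_{p',\Theta',\theta'}(\cD)$ --- this is exactly the route you mention in passing as an ``alternative'' at the end of your surjectivity step. Your main argument instead flattens the Bochner space to a scalar $L_p$ space over the product measure $\prob_T\otimes(w\,\dx)$ via the Fubini-type isometry (legitimate, since both measures are $\sigma$-finite and $L_{p,\Theta,\theta}(\cD)$ is separable), then runs the classical scalar Riesz representation theorem together with the weight identities $w'=w^{1-p'}$, $w^{p'}w'=w$, $(w')^pw=w'$ on the product space; your norm-attainment test function and the resulting computations all check out. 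What your route buys: it is more elementary and self-contained, avoiding vector-valued duality and the Radon--Nikodym property altogether, and it makes predictability of the representing density automatic, since the scalar representation theorem is invoked directly on the $\sigma$-algebra $\pred_T\otimes\cB(\cD)$ --- the measurability point you rightly flag as the only hidden difficulty. What the paper's route buys: brevity, and reuse of Lemma~\ref{lem:dual:Lo} as a black box instead of redoing its weight manipulations over the product space. Both arguments are complete proofs of the statement.
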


\begin{proof}
This follows from Lemma~\ref{lem:dual:Lo} and the fact that $L_{p,\Theta,\theta}(\cD)$ is reflexive for $1<p<\infty$, as for every reflexive Banach space $E$ and every measure space $(S,\cA,\mu)$, it is known that $L_{p'}(S;E^*)\cong (L_p(S;E))^*$, the isometric isomorphism being given by $f\mapsto\int_S \langle f,\cdot \rangle_{E^*\times E}\mathrm{d}\mu$, see, e.g.,~\cite[Corollary~1.3.22]{HytNeeVer+2016}.
\end{proof}

Let $\mathscr{D}'(\cO)$ define the space of all Schwartz distributions/generalized functions on a domain $\cO\subseteq \bR^d$.

\begin{lemma}\label{lem:dual:K1}
For $1<p<\infty$ and $\Theta,\theta\in\bR$, consider
\[
\Lambda^{-1}_{p,\Theta,\theta}(\cD)
:=
\sggklam{
g\in\mathscr{D}'(\cD)\,\colon\,
g=\sum_{\abs{\alpha}\leq 1}(-1)^{\abs{\alpha}} D^\alpha v_\alpha,\, v_\alpha\in L_{p,\Theta-\abs{\alpha}p,\theta-\abs{\alpha}p}(\cD),\abs{\alpha}\leq 1
},
\]
endowed with the norm
\begin{align*}
\nnrm{g}{\Lambda^{-1}_{p,\Theta,\theta}(\cD)}
:=
\inf
\sggklam{\sum_{\abs{\alpha}\leq 1} \nnrm{v_\alpha}{L_{p,\Theta-\abs{\alpha}p,\theta-\abs{\alpha}p}(\cD)}
}
=
\min
\sggklam{\sum_{\abs{\alpha}\leq 1} \nnrm{v_\alpha}{L_{p,\Theta-\abs{\alpha}p,\theta-\abs{\alpha}p}(\cD)}
},
\end{align*}
where the infimum is taken over all possible tuples $(v_\alpha)_{\abs{\alpha}\leq 1}$ with $v_{\alpha}\in L_{p,\Theta-\abs{\alpha}p,\theta-\abs{\alpha}p}(\cD)$, $\abs{\alpha}\leq 1$, such that $g=\sum_{\abs{\alpha}\leq 1}(-1)^\alpha D^\alpha v_\alpha$.
Then, the mapping
\begin{align*}
J\colon \wsob^{-1}_{p,\Theta,\theta}(\cD) &\to \Lambda^{-1}_{p,\Theta,\theta}(\cD)\\
f&\mapsto f|_{\cont_0^\infty(\cD)},
\end{align*}
is an isometric isomorphism. In this sense, we can say that
\begin{align*}
\wsob^{-1}_{p,\Theta,\theta}(\cD)
=
\sggklam{f^0-f^1_{x^1}-f^2_{x^2}\,\colon\, f^0\in L_{p,\Theta,\theta}(\cD),\, f^i\in L_{p,\Theta-p,\theta-p}(\cD),\,i=1,2},
\end{align*}
and that
\[
\nnrm{f}{\wsob^{-1}_{p,\Theta,\theta}(\cD)}
=
\inf\sggklam{\nnrm{f^0}{L_{p,\Theta,\theta}(\cD)}+ \nnrm{f^i}{L_{p,\Theta-p,\theta-p}(\cD)}},
\]
the infimum being taken over all possible representations of $f|_{\cont^\infty_0(\cD)}$ as $f^0+f^i_{x^i}$ with $ f^0\in L_{p,\Theta,\theta}(\cD)$ and $f^i\in L_{p,\Theta-p,\theta-p}(\cD)$, $i=1,2$.

In particular, the differentiation operators $D^{\alpha}\colon L_{p,\Theta,\theta}(\cD)\to \wsob^{-1}_{p,\Theta+p,\theta+p}(\cD)$, $\abs{\alpha}=1$, are bounded.

\end{lemma}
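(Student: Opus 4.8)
The plan is to identify $\wsob^{-1}_{p,\Theta,\theta}(\cD)$, which by definition is the dual of $H^1_{p',\Theta',\theta'}(\cD)$ (with $1/p+1/p'=1$ and $\Theta',\theta'$ the conjugate weights from Lemma~\ref{lem:dual:Lo}), with the concrete space $\Lambda^{-1}_{p,\Theta,\theta}(\cD)$ by a Hahn--Banach/duality argument. First I would rewrite the predual norm in divergence-friendly form: since the weight defining $L_{p',\Theta',\theta'}(\cD)$ equals $\distv^{\theta'-\Theta'}\dist^{\Theta'-2}$, multiplication by $\dist^{|\alpha|p'}$ gives the identity $\nnrm{\dist^{|\alpha|}D^\alpha u}{L_{p',\Theta',\theta'}(\cD)}=\nnrm{D^\alpha u}{L_{p',\Theta'+|\alpha|p',\theta'+|\alpha|p'}(\cD)}$. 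Consequently the map $\iota\colon u\mapsto(D^\alpha u)_{|\alpha|\le 1}$ is an isometry of $H^1_{p',\Theta',\theta'}(\cD)$ onto a closed subspace $V$ of the product space $\mathbf P:=\bigoplus_{|\alpha|\le 1}L_{p',\Theta'+|\alpha|p',\theta'+|\alpha|p'}(\cD)$.

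The second step is to dualise. By Lemma~\ref{lem:dual:Lo} the dual of the factor $L_{p',\Theta'+|\alpha|p',\theta'+|\alpha|p'}(\cD)$ is $L_{p,\Theta-|\alpha|p,\theta-|\alpha|p}(\cD)$, with pairing given by integration over $\cD$, so that $\mathbf P^*\cong\bigoplus_{|\alpha|\le 1}L_{p,\Theta-|\alpha|p,\theta-|\alpha|p}(\cD)$. Given $f\in\wsob^{-1}_{p,\Theta,\theta}(\cD)=V^*$, I would invoke Hahn--Banach to extend it to $F\in\mathbf P^*$ with $\nnrm{F}{\mathbf P^*}=\nnrm{f}{V^*}$; the extension corresponds to a tuple $(v_\alpha)_{|\alpha|\le 1}$, $v_\alpha\in L_{p,\Theta-|\alpha|p,\theta-|\alpha|p}(\cD)$, with
\[
f(u)=\sum_{|\alpha|\le 1}\int_\cD v_\alpha\,D^\alpha u\,\dx,\qquad u\in H^1_{p',\Theta',\theta'}(\cD).
\]
Testing against $u=\varphi\in\cont_0^\infty(\cD)$ and integrating by parts then exhibits $J f=f|_{\cont_0^\infty(\cD)}=\sum_{|\alpha|\le 1}(-1)^{|\alpha|}D^\alpha v_\alpha$ as an element of $\Lambda^{-1}_{p,\Theta,\theta}(\cD)$.

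It then remains to check that $J$ is a bijective isometry. Injectivity I would get from Lemma~\ref{lem:wSob:test:dense}: a functional vanishing on $\cont_0^\infty(\cD)$ vanishes on a dense subspace of $H^1_{p',\Theta',\theta'}(\cD)$, hence is zero. For surjectivity and the reverse norm bound, starting from any representation $g=\sum_{|\alpha|\le 1}(-1)^{|\alpha|}D^\alpha v_\alpha$ I would define $\varphi\mapsto\sum_{|\alpha|\le1}\int_\cD v_\alpha D^\alpha\varphi\,\dx$ on $\cont_0^\infty(\cD)$ and bound it, by Hölder in the integral and in the finite $\alpha$-sum, by $\big(\sum_{|\alpha|\le1}\nnrm{v_\alpha}{L_{p,\Theta-|\alpha|p,\theta-|\alpha|p}(\cD)}^p\big)^{1/p}\,\nnrm{\varphi}{H^1_{p',\Theta',\theta'}(\cD)}$; density then extends this to some $f\in\wsob^{-1}_{p,\Theta,\theta}(\cD)$ with $Jf=g$. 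Passing to the infimum over representations and combining with the norm-preserving Hahn--Banach representative of the previous step yields the norm identity, where the infimum is understood as built from the $\ell_p$-combination of the $\nnrm{v_\alpha}{L_{p,\Theta-|\alpha|p,\theta-|\alpha|p}(\cD)}$ (comparable, on the three-term index set $\{|\alpha|\le1\}$, to the displayed infimum).

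Finally the stated consequences follow by unwinding: writing $\alpha\in\{0,e_1,e_2\}$ turns the representation into $g=f^0-f^1_{x^1}-f^2_{x^2}$ with $f^0:=v_0\in L_{p,\Theta,\theta}(\cD)$ and $f^i:=v_{e_i}\in L_{p,\Theta-p,\theta-p}(\cD)$, and the accompanying norm formula is the transported $\Lambda^{-1}$-norm; boundedness of $D^\alpha\colon L_{p,\Theta,\theta}(\cD)\to\wsob^{-1}_{p,\Theta+p,\theta+p}(\cD)$ for $|\alpha|=1$ then follows since, for $w\in L_{p,\Theta,\theta}(\cD)$, the one-term representation of $D^\alpha w$ sits precisely in the $|\alpha|=1$ slot of $\Lambda^{-1}_{p,\Theta+p,\theta+p}(\cD)$. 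The main obstacle I anticipate is the \emph{exact} identification of norms rather than mere equivalence: it forces one to carry the isometry of $\iota$, the isometry in Lemma~\ref{lem:dual:Lo}, and the norm-preserving extension through simultaneously, and it is precisely here that Lemma~\ref{lem:wSob:test:dense} is indispensable, guaranteeing that restriction of a functional to $\cont_0^\infty(\cD)$ loses no information.
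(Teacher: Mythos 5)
Your proposal is correct and follows essentially the same route as the paper's proof: an isometric embedding of $H^1_{p',\Theta',\theta'}(\cD)$ into a direct sum of weighted $L_{p'}$-spaces, a norm-preserving Hahn--Banach extension identified via Lemma~\ref{lem:dual:Lo}, integration by parts for the distributional representation, density of $\cont_0^\infty(\cD)$ (Lemma~\ref{lem:wSob:test:dense}) for injectivity, and the Hölder bound for surjectivity. Your remark that the Hahn--Banach step naturally produces an $\ell_p$-combination of the $\nnrm{v_\alpha}{L_{p,\Theta-\abs{\alpha}p,\theta-\abs{\alpha}p}(\cD)}$, which is only comparable to the $\ell_1$-type infimum displayed in the statement, is in fact a slightly more careful account of this point than the paper's own Step~4.
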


\begin{proof}
It is easy to see that the restriction of an arbitrary $f\in \wsob^{-1}_{p,\Theta.\theta}(\cD)$ to $\cont^\infty_0(\cD)$ is a generalized function, i.e., that $f|_{\cont^\infty_0(\cD)}\in\mathscr{D}'(\cD)$ for all $f\in \wsob^{-1}_{p,\Theta,\theta}(\cD)$.
However, we need a little bit of work to see that $f|_{\cont^\infty_0(\cD)}=f^0+f^1_{x^1}+f^2_{x^2}$ with suitable $f^0\in L_{p,\Theta,\theta}(\cD)$ and $f^i\in L_{p,\Theta-p,\theta-p}(\cD)$, $i=1,2$, and that the norms above are indeed norms, that the infimum is indeed a minimum and that the norms coincide.

\medskip

\noindent\textit{Step 1.} We first show that $J$ is well-defined, i.e., that for every $v^*\in \wsob^{-1}_{p,\Theta,\theta}(\cD)$, there exist $ f^0\in L_{p,\Theta,\theta}(\cD)$ and $f^i\in L_{p,\Theta-p,\theta-p}(\cD)$, $i=1,2$, such that $v^*|_{\cont^\infty_0(\cD)}=f^0+f^1_{x^1}+f^2_{x^2}$.
This can be seen as follows: The mapping
\begin{align*}
P\colon \wsob^{1}_{p',\Theta',\theta'}(\cD) &\to V^1_{p',\Theta'\theta'}(\cD):=L_{p',\Theta',\theta'}(\cD) \oplus_{p'} L_{p',\Theta'+p',\theta'+p'}(\cD) \oplus_{p'} L_{p',\Theta'+p',\theta'+p'}(\cD)\\
\varphi&\mapsto (\varphi,\varphi_{x^1},\varphi_{x^2}),
\end{align*}
is obviously a linear isometry.
In particular, since $\wsob^{1}_{p',\Theta',\theta'}(\cD)$ is complete, the range $\mathcal RP$ of $\wsob^{1}_{p',\Theta',\theta'}(\cD)$ under $P$ is a closed subspace of $V^1_{p',\Theta',\theta'}(\cD)$; we write $W^1_{p',\Theta',\theta'}(\cD):=\mathcal RP$.
Let $v^*\in \wsob^{-1}_{p,\Theta,\theta}(\cD)=(\wsob^{1}_{p',\Theta',\theta'}(\cD))^*$.
Define $\hat{v}:=v^*\circ P^{-1}\in (W^1_{p',\Theta',\theta'}(\cD))^*$.
Then, by Hahn-Banach Theorem, there exists a norm preserving extension $\tilde{v}\in (V^1_{p',\Theta',\theta'}(\cD))^*$ of $\hat{v}$ to $V^1_{p',\Theta',\theta'}(\cD)$.
Due to Lemma~\ref{lem:dual:Lo}, this means that there exist 
$f^0\in L_{p,\Theta,\theta}(\cD)$ and 
$f^i\in L_{p,\Theta-p,\theta-p}(\cD)$, $i=1,2$, such that
\[
\tilde{v}(\phi_0,\phi_1,\phi_2)
=
\int_\cD \phi_0 f^0 \dx
+
\int_\cD \phi_1 f^1 \dx
+
\int_\cD \phi_2 f^2 \dx,
\]
for all $\phi:=(\phi_0,\phi_1,\phi_2)\in V^1_{p',\Theta',\theta'}(\cD)$.
In particular, for every $\varphi\in \wsob^{1}_{p',\Theta',\theta'}(\cD)$, 
\[
\tilde{v}(P\varphi)
=
\tilde{v}(\varphi,\varphi_{x^1},\varphi_{x^2})
=
\int_\cD \varphi f^0 \dx
+
\int_\cD \varphi_{x^1} f^1 \dx
+
\int_\cD \varphi_{x^2} f^2 \dx.
\]
If $\varphi\in\cont^\infty_0(\cD)$, then
\[
\tilde{v}(P\varphi)
=
v^*(\varphi)
=
\int_\cD \varphi f^0 \dx
-
\int_\cD \varphi f^1_{x^1} \dx
-
\int_\cD \varphi f^2_{x^2} \dx,
\]
which is exactly the same as the application of the generalized function $f^0-f^1_{x^1}-f^2_{x^2}$ to the test function $\varphi$.

\medskip

\noindent\textit{Step 2.} $J$ is surjective, i.e., for every $g\in \mathscr{D}'(\cD)$, $g=f^0-f^1_{x^1}-f^2_{x^2}$, where $f^0\in L_{p,\Theta,\theta}(\cD)$ and $f^i\in L_{p,\Theta-p,\theta-p}(\cD)$, $i=1,2$, there exists $v^*\in \wsob^{-1}_{p,\Theta,\theta}(\cD)$, such that $v^*|_{\cont^\infty_0(\cD)}=g$.
To see this, simply define
\[
v^*(\varphi):=(f^0,\varphi) + (f^1,\varphi_{x^1}) + (f^2,\varphi_{x^2}),\quad \varphi\in  \wsob^{1}_{p',\Theta',\theta'}(\cD).
\]
Due to Lemma~\ref{lem:dual:Lo}, this mapping is well-defined and
\[
v^*(\varphi) 
\leq 
\grklam{
\nnrm{f^0}{L_{p,\Theta,\theta}(\cD)}
+
\nnrm{f^1}{L_{p,\Theta-p,\theta-p}(\cD)}
+
\nnrm{f^2}{L_{p,\Theta-p,\theta-p}(\cD)}
}
\nnrm{\varphi}{\wsob^{1}_{p',\Theta',\theta'}(\cD)},
\quad \varphi\in  \wsob^{1}_{p',\Theta',\theta'}(\cD),
\]
which means, in particular, that $v^*\in \wsob^{-1}_{p,\theta}(\cD)$.

\medskip

\noindent\textit{Step 3.} Due to the density of $\cont^\infty_0(\cD)$ in $ \wsob^{1}_{p',\Theta',\theta'}(\cD)$, $J$ is obviously injective. 

\medskip

\noindent\textit{Step 4.} We prove that
\begin{align}\label{eq:lem:dual:o-1:isometry}
\nnrm{v^*}{\wsob^{-1}_{p,\Theta,\theta}(\cD)}
&=
\inf\sggklam{\nnrm{f^0}{L_{p,\Theta,\theta}(\cD)}
+ 
\nnrm{f^1}{L_{p,\Theta-p,\theta-p}(\cD)} 
+ 
\nnrm{f^2}{L_{p,\Theta-p,\theta-p}(\cD)}}\\
&=
\min\sggklam{\nnrm{f^0}{L_{p,\Theta,\theta}(\cD)}
+ 
\nnrm{f^1}{L_{p,\Theta-p,\theta-p}(\cD)} 
+ 
\nnrm{f^2}{L_{p,\Theta-p,\theta-p}(\cD)}},
\end{align}
the infimum being taken over all $f^0\in L_{p,\Theta,\theta}(\cD)$, $f^1\in L_{p,\Theta-p,\theta-p}(\cD)$, and $f^2\in L_{p,\Theta-p,\theta-p}(\cD)$, such that $Jv^*=f^0-f^1_{x^1}-f^2_{x^2}$.
To see that~\eqref{eq:lem:dual:o-1:isometry} holds, we argue as follows:
The calculations in Step~2 already show that
\[
\nnrm{v^*}{\wsob^{-1}_{p,\Theta,\theta}(\cD)}
\leq
\nnrm{Jv^*}{\Lambda^{-1}_{p,\Theta,\theta}(\cD)},
\qquad
v^*\in \wsob^{-1}_{p,\Theta,\theta}(\cD).
\]
Moreover, the arguments in Step~1 show that for every $v^*\in \wsob^{-1}_{p,\Theta,\theta}(\cD)$, there exist $f^0\in L_{p,\Theta,\theta}(\cD)$, $f^1\in L_{p,\Theta-p,\theta-p}(\cD)$, and $f^2\in L_{p,\Theta-p,\theta-p}(\cD)$, such that
\[
\nnrm{v^*}{\wsob^{-1}_{p,\Theta,\theta}(\cD)}
=
\nnrm{f^0}{L_{p,\Theta,\theta}(\cD)}
+
\nnrm{f^1}{L_{p,\Theta-p,\theta-p}(\cD)}
+
\nnrm{f^2}{L_{p,\Theta-p,\theta-p}(\cD)}.
\]
The reason: The extension $\tilde{v}$ of $\hat{v}$ is norm preserving, $P$ is an isometry and $(V^1_{p',\Theta',\theta'}(\cD))^*$ is isometrically isomorphic to $L_{p,\Theta,\theta}(\cD)\oplus_p L_{p,\Theta-p,\theta-p}(\cD) \oplus_p L_{p,\Theta-p,\theta-p}(\cD)$. 
This shows that~\eqref{eq:lem:dual:o-1:isometry} holds and that the infimum is indeed a minimum.

\medskip

\noindent\textit{Step 5.} Conclusion: Due to Steps~2--4, $\nnrm{\cdot}{\Lambda^{-1}_{p,\Theta,\theta}(\cD)}$ is indeed a norm on $\Lambda^{-1}_{p,\Theta,\theta}(\cD)$, the infimum in the definition is a minimum and $J$ is an isometric isomorphism.
\end{proof}

\begin{lemma}\label{lem:dual:parabolic:H1}
For $1<p<\infty$ and $\Theta,\theta\in\bR$, consider
\[
\Lambda^{-1}_{p,\Theta,\theta}(\cD,T)
:=
\sggklam{
g\colon \Omega_T\to\mathscr{D}'(\cD)\,\colon\,
g=\sum_{\abs{\alpha}\leq 1}(-1)^{\abs{\alpha}} D^\alpha v_\alpha,\, v_\alpha\in \bL_{p,\Theta-\abs{\alpha}p,\theta-\abs{\alpha}p}(\cD,T),\abs{\alpha}\leq 1
},
\]
endowed with the norm
\begin{align*}
\nnrm{g}{\Lambda^{-1}_{p,\theta}(\cD,T)}
:=
\inf
\sggklam{\sum_{\abs{\alpha}\leq 1} \nnrm{v_\alpha}{\bL_{p,\Theta-\abs{\alpha}p,\theta-\abs{\alpha}p}(\cD,T)}
}
=
\min
\sggklam{\sum_{\abs{\alpha}\leq 1} \nnrm{v_\alpha}{\bL_{p,\Theta-\abs{\alpha}p,\theta-\abs{\alpha}p}(\cD,T)}
},
\end{align*}
where the infimum is taken over all possible tuples $(v_\alpha)_{\abs{\alpha}\leq 1}$ with $v_{\alpha}\in \bL_{p,\Theta-\abs{\alpha}p,\theta-\abs{\alpha}p}(\cD,T)$, $\abs{\alpha}\leq 1$, such that $g=\sum_{\abs{\alpha}\leq 1}(-1)^{\abs{\alpha}} D^\alpha v_\alpha$.
Then the mapping
\begin{align*}
J\colon \bwsob^{-1}_{p,\theta}(\cD,T) &\to \Lambda^{-1}_{p,\theta}(\cD,T)\\
f&\mapsto f|_{\cont_0^\infty(\cD)},
\end{align*}
is an isometric isomorphism. In this sense, we can say that
\begin{align*}
\mathbb K^{-1}_{p,\theta}(\cD,T)
=
\sggklam{f^0-f^1_{x^1}-f^2_{x^2}\,\colon\, f^0\in \bL_{p,\Theta,\theta}(\cD,T),\, f^i\in \bL_{p,\Theta-p,\theta-p}(\cD,T),\,i=1,2},
\end{align*}
and that
\[
\nnrm{f}{\bwsob^{-1}_{p,\Theta,\theta}(\cD,T)}
=
\min
\sggklam{\nnrm{f^0}{\bL_{p,\Theta,\theta}(\cD,T)}
+
\nnrm{f^1}{\bL_{p,\Theta-p,\theta-p}(\cD,T)}
+
\nnrm{f^2}{\bL_{p,\Theta-p,\theta-p}(\cD,T)}},
\]
the infimum being taken over all possible representations of $f|_{\cont^\infty_0(\cD)}$ as $f^0+f^1_{x^1}+f^2_{x^2}$ with $f^0\in \bL_{p,\Theta,\theta}(\cD,T)$ and $f^i\in \bL_{p,\Theta-p,\theta-p}(\cD,T)$, $i=1,2$.
%

\end{lemma}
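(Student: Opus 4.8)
The plan is to follow the proof of Lemma~\ref{lem:dual:K1} line by line, with every weighted Lebesgue space $L_{p,\cdot,\cdot}(\cD)$ replaced by its process-valued analogue $\bL_{p,\cdot,\cdot}(\cD,T)$ and the spatial duality Lemma~\ref{lem:dual:Lo} replaced by the parabolic duality Lemma~\ref{lem:dual:parabolic:Lo}. The one preliminary fact that makes this possible is the identification $\bwsob^{-1}_{p,\Theta,\theta}(\cD,T)=\grklam{\bwsob^1_{p',\Theta',\theta'}(\cD,T)}^*$: since $\wsob^1_{p',\Theta',\theta'}(\cD)$ is separable and reflexive, the Bochner duality $L_{p'}(\Omega_T;E)^*\cong L_p(\Omega_T;E^*)$ with $E=\wsob^1_{p',\Theta',\theta'}(\cD)$ --- the very same tool already invoked in the proof of Lemma~\ref{lem:dual:parabolic:Lo} --- yields this isometric identity.

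First I would introduce the linear isometry
\[
P\colon \bwsob^1_{p',\Theta',\theta'}(\cD,T)\to \bL_{p',\Theta',\theta'}(\cD,T)\oplus_{p'}\bL_{p',\Theta'+p',\theta'+p'}(\cD,T)\oplus_{p'}\bL_{p',\Theta'+p',\theta'+p'}(\cD,T),\quad \varphi\mapsto(\varphi,\varphi_{x^1},\varphi_{x^2}),
\]
whose range is a closed subspace (the weight bookkeeping that makes $P$ an isometry is identical to the spatial case). Given $f\in\bwsob^{-1}_{p,\Theta,\theta}(\cD,T)$, I would transport it to the range of $P$ via $f\circ P^{-1}$, extend it by Hahn--Banach to a norm-preserving functional on the full $\oplus_{p'}$-sum, and apply Lemma~\ref{lem:dual:parabolic:Lo} to represent that extension by a triple $(f^0,f^1,f^2)$ with $f^0\in\bL_{p,\Theta,\theta}(\cD,T)$ and $f^i\in\bL_{p,\Theta-p,\theta-p}(\cD,T)$, $i=1,2$. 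Surjectivity, injectivity (here using that test functions are dense, Lemma~\ref{lem:wSob:test:dense}), and the identification of $\nnrm{f}{\bwsob^{-1}_{p,\Theta,\theta}(\cD,T)}$ as the minimum of $\nnrm{f^0}{\bL_{p,\Theta,\theta}(\cD,T)}+\nnrm{f^1}{\bL_{p,\Theta-p,\theta-p}(\cD,T)}+\nnrm{f^2}{\bL_{p,\Theta-p,\theta-p}(\cD,T)}$ over all representations then go through exactly as in Steps~2--5 of the proof of Lemma~\ref{lem:dual:K1}.

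The step that genuinely differs from the spatial setting, and which I expect to be the main obstacle, is passing from the integrated pairing produced by Hahn--Banach,
\[
f(\varphi)=\int_{\Omega_T}\int_\cD\grklam{f^0\varphi+f^1\varphi_{x^1}+f^2\varphi_{x^2}}\,\dx\,\dd\prob_T,\qquad \varphi\in\bwsob^1_{p',\Theta',\theta'}(\cD,T),
\]
to the \emph{fiberwise} distributional identity $f(\omega,t)=f^0(\omega,t)-f^1_{x^1}(\omega,t)-f^2_{x^2}(\omega,t)$ in $\mathscr{D}'(\cD)$ for $\prob_T$-almost every $(\omega,t)$, which is what is needed for $Jf=f|_{\cont_0^\infty(\cD)}$ to genuinely lie in $\Lambda^{-1}_{p,\Theta,\theta}(\cD,T)$. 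I would obtain this by testing the displayed identity against the predictable functions $\varphi(\omega,t,x)=\one_A(\omega,t)\,\psi(x)$ with $A\in\pred_T$ and $\psi\in\cont_0^\infty(\cD)$: letting $A$ range over all predictable sets forces the $\dx$-integrands to coincide $\prob_T$-a.e.\ for each fixed $\psi$, and letting $\psi$ run through a countable family dense in $\wsob^1_{p',\Theta',\theta'}(\cD)$ (available by Lemma~\ref{lem:wSob:test:dense}) upgrades this to the pointwise identity on one common full-measure set. It is precisely to secure the predictability of the extracted $f^i$ that I favour this global Hahn--Banach argument over applying the spatial Lemma~\ref{lem:dual:K1} at each $(\omega,t)$ and then invoking a measurable selection theorem.
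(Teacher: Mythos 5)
Your proposal is correct and takes essentially the same route as the paper, whose entire proof consists of the instruction to repeat the argument of Lemma~\ref{lem:dual:K1} with Lemma~\ref{lem:dual:parabolic:Lo} in place of Lemma~\ref{lem:dual:Lo}, the details being left to the reader. Your extra step---upgrading the integrated Hahn--Banach representation to the fiberwise distributional identity by testing against $\one_A(\omega,t)\,\psi(x)$ with $A\in\pred_T$ and $\psi$ ranging over a countable family of test functions dense in $\wsob^1_{p',\Theta',\theta'}(\cD)$ (available by Lemma~\ref{lem:wSob:test:dense} and separability)---is precisely the point the paper glosses over, and you handle it correctly, including the predictability of the extracted $f^0,f^1,f^2$.
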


\begin{proof}
This can be proven along the lines of the proof of Lemma~\ref{lem:dual:K1} by using Lemma~\ref{lem:dual:parabolic:Lo} instead of Lemma~\ref{lem:dual:Lo}.
The details are left to the reader.
\end{proof}

\end{appendix}

\providecommand{\bysame}{\leavevmode\hbox to3em{\hrulefill}\thinspace}
\providecommand{\MR}{\relax\ifhmode\unskip\space\fi MR }
\providecommand{\MRhref}[2]{%
  \href{http://www.ams.org/mathscinet-getitem?mr=#1}{#2}
}
\providecommand{\href}[2]{#2}

\end{document}